\newtheorem{theorem}{Theorem}[section]
\newtheorem{proposition}[theorem]{Proposition}
\newtheorem{proposition*}{Proposition}
\newtheorem{lemma}[theorem]{Lemma}
\newtheorem{corollary}[theorem]{Corollary}
\DeclareMathOperator{\Hn}{\mathcal{H}^n}
\DeclareMathOperator{\Hnn}{\mathcal{H}^{n+1}}
\DeclareMathOperator{\G}{\Gamma}
\DeclareMathOperator{\Om}{\Omega}
\DeclareMathOperator{\Si}{\Sigma}
\newcommand{\R}{\mathbb{R}}
\newcommand{\Cgs}{5 \Hn(\partial U)}
\newcommand{\Clgs}{9 \Hn(\partial U)}
\theoremstyle{definition}
\newtheorem{definition}[theorem]{Definition}
\newtheorem{remark}[theorem]{Remark}
\title[Existence of minimal hypersurface]{Existence of minimal hypersurfaces
in complete manifolds of finite volume}
\author{Gregory R. Chambers}
\author{Yevgeny Liokumovich}
\begin{document}

\maketitle

\begin{abstract}
We prove that every complete non-compact
manifold of finite volume
contains a (possibly non-compact)
minimal hypersurface of finite volume.
The  main tool is the following result of 
independent interest: if a region $U$ can be swept out by a family of hypersurfaces of volume
at most $V$, then it can be swept out by a family of 
mutually disjoint
hypersurfaces of volume at most $V + \varepsilon$.
\end{abstract}

\section{Introduction}
\label{sec:Introduction}

By a result of Bangert and Thorbergsson 
(see \cite{Th} and \cite{Ba}) 
every complete
surface of finite area contains a closed
geodesic of finite length.
In this article we generalize this result
to higher dimensions.

Let $M^{n+1}$ be a complete Riemannian
manifold of dimension $n+1$.
For an open set $U \subset M$ define
the relative width of $U$, denoted by $W_{\partial}(U)$, 
to be the supremum over all real numbers $\omega$
such that every Morse function
$f:U \rightarrow [0,1]$
has a fiber of volume at least $\omega$.


\begin{theorem} \label{main}
Let $M^{n+1}$ be a complete Riemannian
manifold of dimension $n+1$, $(n+1) \geq 3$.
Suppose $M$ contains a bounded open set $U$
with smooth boundary, such that
$Vol_n(\partial U)\leq \frac{W_{\partial}(U)}{10}$.
Then $M$ contains a complete
embedded minimal hypersurface $\G$ of finite volume.
The hypersurface is smooth
in the complement of a closed set of Hausdorff dimension $n-7$.
\end{theorem}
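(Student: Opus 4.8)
The plan is to combine one-parameter min--max with an exhaustion of $M$, using the hypothesis $\Hn(\partial U)\le W_\partial(U)/10$ both to guarantee that the minimal hypersurfaces produced are nontrivial and to trap a definite amount of their area inside $\bar U$, so that in the limit no mass escapes to infinity and the total volume stays finite. First I would fix an exhaustion $\bar U\subset\Om_1\subset\Om_2\subset\cdots$ of $M$ by compact domains with smooth boundary. The naive attempt---min--max for sweepouts of the whole domain $\Om_i$---fails, since the width of $\Om_i$ typically tends to infinity as $i\to\infty$ and the limiting hypersurface could then have infinite volume. Instead, for each $i$ I would run min--max only over sweepouts of $\Om_i$ that \emph{sweep across $U$}: families of (relative) cycles interpolating between $0$ and $[\partial U]$ in the homotopy class which, restricted to $\bar U$, gives an honest sweepout of $\bar U$ realizing $W_\partial(U)$. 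This class is genuinely different from the one that merely fills in $\Om_i\setminus\bar U$ (their concatenation detects the fundamental class of $\Om_i$), so its min--max value is not dragged down by a possibly thin complement.

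For this class a nearly optimal Morse function on $\bar U$ provides a competitor all of whose slices lie in $\bar U$ with $\Hn$-mass at most $W_\partial(U)+\epsilon$, so the min--max value $L_i$ satisfies $L_i\le W_\partial(U)+\epsilon$ \emph{uniformly in $i$}; this uniform upper bound is what ultimately makes the limit have finite volume. Conversely, restricting any admissible family to $\bar U$ yields a sweepout of $\bar U$ in the same class, so by the definition of $W_\partial(U)$ (together with the routine comparison between sweepouts by level sets of Morse functions and sweepouts by cycles, which changes the width by at most $\Hn(\partial U)$) some slice has mass at least $W_\partial(U)-\Hn(\partial U)$; hence $L_i\ge W_\partial(U)-\Hn(\partial U)>0$. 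Since $L_i$ is strictly larger than the mass $\Hn(\partial U)$ of the endpoints---here the factor $10$ supplies the margin---the Almgren--Pitts machinery together with the Schoen--Simon regularity theory produces an embedded minimal hypersurface $\G_i$, minimal in $\mathrm{int}(\Om_i)$ and smooth away from a closed set of Hausdorff dimension at most $n-7$, with $\Hn(\G_i)=L_i\in[\,W_\partial(U)-\Hn(\partial U),\,W_\partial(U)+\epsilon\,]$. (I suppress the standard bookkeeping for the possible free boundary of $\G_i$ on $\partial\Om_i$, which is harmless since $\partial\Om_i$ escapes to infinity.)

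The heart of the proof, and the step I expect to be the main obstacle, is to show that $\|\G_i\|(\bar U)$ stays bounded below by a positive constant independent of $i$; otherwise the hypersurfaces could drift off to infinity and the varifold limit would be empty. This is exactly where the constants $\Cgs$, $\Cmgs$, $\Clgs$ and the factor $10$ are forced. Roughly: a near-optimal admissible family has a slice $\Sigma$ with $\Hn(\Sigma\cap\bar U)\ge W_\partial(U)-\Cgs$ whose total mass is at most $L_i+\epsilon$, so $\Sigma$ is essentially supported in $\bar U$; one must then argue, via a sufficiently refined version of min--max in which the surgeries performed near $\partial U$ are controlled (each costing at most $\Hn(\partial U)$ of area), that such a slice is close to $\G_i$, whence $\|\G_i\|(\bar U)\ge W_\partial(U)-\Clgs\ge W_\partial(U)-\tfrac{9}{10}W_\partial(U)>0$. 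Granting this, the hypersurfaces $\G_i$ have uniformly bounded area (and bounded Morse index), so by the compactness theory for minimal hypersurfaces a subsequence converges, as varifolds and smoothly away from a codimension-$7$ set, to an integral varifold $\G$ which is stationary in all of $M$, satisfies $\Hn(\G)\le W_\partial(U)+\epsilon<\infty$, and has $\|\G\|(\bar U)>0$. Thus $\G$ is a nonempty complete embedded minimal hypersurface of finite volume, smooth in the complement of a closed set of dimension $n-7$; finally, letting $\epsilon\to0$ along a diagonal subsequence improves the bound to $\Hn(\G)\le W_\partial(U)$.
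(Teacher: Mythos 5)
Your overall strategy differs from the paper's (the paper runs min--max directly on the non-compact $M$ using ``good'' sweepouts of $U$, with a localized pull-tight adapted to non-compactness, rather than min--max on a compact exhaustion followed by a limit), but the decisive issue is that the step you yourself flag as ``the heart of the proof'' is not actually argued, and the sketch you give for it would not work. From the existence of a single slice $\Sigma$ of a near-optimal family with $\Hn(\Sigma\cap \bar U)$ large you cannot conclude that the min--max limit $\G_i$ (let alone the limit over $i$) has mass in $\bar U$: the min--max sequence extracted by the tightening/almost-minimizing machinery need not be close to any particular large slice, and a priori all slices of nearly maximal area could have negligible intersection with $U$. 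Ruling this out is exactly the content of Proposition \ref{prop:intersection_U}, whose proof is a genuine cut-and-paste argument: assuming every near-maximal slice meets $cl(U)$ in area $<\varepsilon(U)$, one isolates an interval of times on which the sweepout crosses $U$, replaces that portion by a \emph{nested} (Morse level-set) family via Proposition \ref{prop:nested}, chooses nearly area-minimizing barriers (Lemma \ref{lem:split}), and pushes small-volume residues out of $U$ (Lemma \ref{epsilon}) to manufacture a good sweepout with all areas strictly below $W_g(U)$, a contradiction; and even then one needs the restricted combinatorial argument of Lemma \ref{combinatorial lemma} (almost minimizing only in pairs from $\mathcal{A}(r,U)$, so that area cannot be transferred out of $N_r(U)$) to make the localized min--max sequence also almost minimizing in annuli. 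None of these ideas appear in your outline, and the phrase ``a sufficiently refined version of min--max in which the surgeries near $\partial U$ are controlled'' is precisely the missing proof. Relatedly, the comparison you call ``routine'' between sweepouts by cycles and sweepouts by level sets of Morse functions is the paper's most technical result (Proposition \ref{prop:nested}); it is needed for your width bounds and cannot be waved through.

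There is a second gap in your limiting step. The hypersurfaces $\G_i$ produced in $\Om_i$ come with no Morse index bound (index-one statements are known only in more restrictive settings and are listed in the paper as an open question), so you cannot invoke compactness with smooth convergence away from a codimension-$7$ set; with only an area bound you get a stationary varifold limit whose regularity is unjustified. The paper avoids this by never passing through a sequence of compact problems: it keeps a single min--max sequence on $M$, proves it is almost minimizing in small annuli everywhere, and applies the local De Lellis--Tasnady regularity theory (Proposition \ref{regularity}), with Proposition \ref{prop:intersection_U} guaranteeing the limit varifold has mass at least $\varepsilon(U)/2$ near $U$ and total mass at most $W_{\partial}(U)+\Hn(\partial U)<\infty$.
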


\begin{remark}
We make some remarks about Theorem \ref*{main}:

1. The hypersurface $\G$ intersects a small neighbourhood of $U$.
In fact, for any $\delta>0$ there exists a finite area minimal hypersurface
that intersects the $\delta$-neighbourhood of $U$ (see Theorem \ref*{main_full}
and Question 3 in Section \ref*{questions}).

2. If $M$ is compact then $\G$ is compact. If $M$ is not compact
then $\G$ may or may not be compact.
In Remark \ref*{main_remark} we give an example,
suggesting that one can not always expect 
to obtain a compact minimal hypersurface
in a complete manifold of finite volume
using a min-max argument.

3. We also obtain upper and lower bounds for the volume
of $\G$ that depend on $U$ (see Theorem \ref*{main_full}).
\end{remark}

The condition that there exists a subset $U$ with
$\Hn(\partial U)\leq \frac{W_{\partial}(U)}{10}$ is 
satisfied if manifold $M$ has sublinear volume growth,
that is,
for some $x \in M$ we have $\liminf_{r \rightarrow \infty} \frac{Vol(B_r(x))}{r} =0 $ .
In particular, we have the following corollary.

\begin{corollary} \label{main'}
Every complete non-compact
Riemannian manifold $M^{n+1}$ of finite volume
contains a (possibly non-compact)
embedded minimal hypersurface of finite 
volume. The hypersurface is smooth
in the complement of a closed set of Hausdorff dimension $n-7$.
\end{corollary}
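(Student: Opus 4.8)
The plan is to deduce Corollary~\ref{main'} directly from Theorem~\ref{main}: it suffices to produce, in any complete non-compact $M^{n+1}$ of finite volume, a bounded open set $U$ with smooth boundary such that $Vol_n(\partial U)\le W_{\partial}(U)/10$. The key point is that, although the global isoperimetric profile of $M$ may degenerate (in a cusp, regions of definite volume are separated by arbitrarily small hypersurfaces), the relative width $W_{\partial}$ is \emph{monotone under inclusion} and hence stays bounded below as soon as $U$ contains a fixed small ball, whereas $Vol_n(\partial U)$ can be made tiny by pushing $\partial U$ far out.

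First I would record the monotonicity: if $V\subseteq U$ are open, then $W_{\partial}(V)\le W_{\partial}(U)$. Given a Morse function $f\colon U\to[0,1]$, its restriction $f|_V$ is (after an arbitrarily small perturbation, should the definition require distinct critical values) a Morse function on $V$, and each fiber $(f|_V)^{-1}(t)=f^{-1}(t)\cap V$ lies inside $f^{-1}(t)$; hence $\sup_t Vol_n(f^{-1}(t))\ge \sup_t Vol_n((f|_V)^{-1}(t))\ge W_{\partial}(V)$, and taking the infimum over $f$ gives the claim.

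Next I would show $W_{\partial}(B)>0$ for a suitably small geodesic ball. Fix $x_0\in M$ and $\rho$ less than the convexity radius at $x_0$, small enough that $\overline{B}$ is compact with smooth boundary and the metric on $B=B_\rho(x_0)$ is, say, $2$-bi-Lipschitz to the Euclidean ball of radius $\rho$; then $B$ satisfies a relative (partitioning) isoperimetric inequality $\min\bigl(Vol(A),Vol(B\setminus A)\bigr)\le C_B\,Vol_n(\partial A\cap B)^{(n+1)/n}$ for all Borel $A\subseteq B$, with $C_B<\infty$. Given a Morse $f\colon B\to[0,1]$, pick a regular value $t_*$ with $Vol\{f<t_*\}$ within $Vol(B)/4$ of $Vol(B)/2$ (possible since $t\mapsto Vol\{f<t\}$ is continuous and regular values are dense); for $A=\{f<t_*\}$ one has $\partial A\cap B=f^{-1}(t_*)$ and $\min(Vol(A),Vol(B\setminus A))\ge Vol(B)/4$, so the inequality forces $Vol_n(f^{-1}(t_*))\ge\bigl(Vol(B)/(4C_B)\bigr)^{n/(n+1)}=:c_0>0$. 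Hence $W_{\partial}(B)\ge c_0$.

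Finally I would build $U$. Let $\phi\colon M\to[0,\infty)$ be a smooth proper function with $|\nabla\phi|\le 1$, obtained by the standard smoothing of $d(x_0,\cdot)$ (proper by completeness, $1$-Lipschitz) followed by a rescaling. The coarea formula gives $\int_0^\infty Vol_n(\phi^{-1}(t))\,dt=\int_M|\nabla\phi|\le Vol(M)<\infty$, so $\liminf_{t\to\infty}Vol_n(\phi^{-1}(t))=0$; combining this with Sard's theorem, choose a regular value $t_*$ so large that $\{\phi<t_*\}\supseteq B$ and $Vol_n(\phi^{-1}(t_*))<c_0/10$. Then $U=\{\phi<t_*\}$ is open and bounded (since $\phi$ is proper), has smooth boundary $\phi^{-1}(t_*)$, and by monotonicity $W_{\partial}(U)\ge W_{\partial}(B)\ge c_0>10\,Vol_n(\partial U)$. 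Theorem~\ref{main} applied to $U$ yields the desired complete embedded minimal hypersurface of finite volume, smooth away from a closed set of Hausdorff dimension $n-7$. The only genuinely non-formal ingredient is the lower bound $W_{\partial}(B)>0$ — that an almost-round ball cannot be swept out by fibers of arbitrarily small $n$-volume — which rests on the relative isoperimetric inequality for $B$ rather than on any global isoperimetric control of $M$; the monotonicity, coarea, Sard, and smoothing steps are routine.
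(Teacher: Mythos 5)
Your proposal is correct and follows essentially the same route as the paper: the lemma in Section 7 also produces a good set by fixing a small ball whose volume-bisecting isoperimetric constant bounds the relevant width from below, and then using the coarea formula for (a smoothing of) the distance function --- under the slightly weaker hypothesis of sublinear volume growth --- to find a far-out level set of small area to serve as $\partial U$; the monotonicity you prove appears there as the remark that every sweepout of $B_R(x)$ is a sweepout of $B_r(x)$. The only cosmetic differences are that the paper takes $U=B_R(x)$ itself and defines the lower bound abstractly as the infimal area of a hypersurface bisecting the small ball, where you instead take a sublevel set of the smoothed distance function and justify positivity via the relative isoperimetric inequality of a nearly Euclidean ball.
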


The proof is based on Almgren-Pitts min-max theory
\cite{Pi}.
We use the version of the theory developed by
De Lellis and Tasnady in \cite{DT}.
Instead of general sweepouts by integral flat cycles, the argument of \cite{DT} allows one to consider
sweepouts by hypersurfaces which are boundaries of open sets.
We consider a sequence of sweepouts of $U$ and extract a sequence
of hypersurfaces of almost maximal area that converges to a minimal hypersurface.
The main difficulty is to rule out the possibility that the sequence
completely escapes into the ``ends" of the manifold.
Proposition \ref*{prop:nested} is the main tool which allows us to rule out this possibility.
This Proposition allows us to replace an arbitrary family of hypersurfaces
with a nested family of hypersurfaces which are level sets of a Morse function,
increasing the maximal area by at most $\varepsilon$ in the process.

For closed manifolds Proposition \ref*{prop:nested} implies the
following result of independent interest.

\begin{theorem}
Let $M$ be a closed Riemannian manifold with min-max width $W(M)>0$.
For every $\varepsilon>0$ there exists a Morse function
$f: M \rightarrow \R$, such that the area of $f^{-1}(t)$
is bounded from above by $W+ \varepsilon$ for all $t \in \R$.
\end{theorem}

The precise definition of the width $W(M)$ is given in Section 3.

We use Proposition \ref*{prop:nested} together with some hands on geometric constructions
to show that there exists a sequence of hypersurfaces that converges to 
a minimal hypersurface and the volume of their intersection with a small neighbourhood of $U$
is bounded away from $0$.

A number of results about existence of minimal hypersurfaces
in non-compact manifolds have appeared recently.
Existence results for minimal hypersurfaces 
(compact and non-compact) in 
certain classes of complete non-compact manifolds
were proved by Gromov in \cite{Gr}.
This work was in part inspired by arguments in \cite{Gr}.
In \cite{Gr} mean curvature of boundaries
plays an important role.
Our results do not depend on the curvature of the 
manifold or mean curvature of hypersurfaces in $M$.
Existence of a compact embedded minimal surface in a hyperbolic 3-manifolds of finite volume
was proved by Collin-Hauswirth-Mazet-Rosenberg
in \cite{CHMR}.
In \cite{Mo} Montezuma gave a detailed proof 
of the existence of embedded closed minimal hypersurfaces in non-compact 
manifolds containing a bounded open subset with mean-concave boundary, 
as well as satisfying certain conditions
on the geometry at infinity.
In particular, these manifolds have infinite volume.
In \cite{KZ} Ketover and Zhou proved a
conjecture of Colding-Ilmanen-Minicozzi-White
about the entropy of closed surfaces in $\mathbb{R}^3$
using a min-max argument for the Gaussian area functional
on a non-compact space.
Finally, in \cite{So2} Song used min-max constructions of minimal
hypersurfaces in non-compact manifolds with cylindrical ends
to prove Yau's conjecture on the existence of infinitely many
minimal hypersurfaces in closed manifolds.

\bigskip

\noindent
\textbf{Acknowledgements} 
This paper uses, in a crucial way,
ideas from the work of Regina Rotman and the first author. The authors are grateful to 
Regina Rotman for many helpful discussions.

The second author would like to thank Camillo De Lellis and
Andr\'{e} Neves for organizing the Oberwolfach seminar ``Min-Max
Constructions of Minimal Surfaces" and for many fruitful
discussions there.
The authors would also like to thank Fernando Coda Marques and Andr\'{e} Neves for making several important suggestions.


The authors are grateful to Fernando Coda Marques
for pointing out two errors in an earlier 
draft of this paper. We are grateful to the referee for 
correcting several errors and making countless suggestions
that helped to improve the exposition.

The first author was partly supported by an NSERC postdoctoral fellowship.
The second author was partly supported by NSF grant DMS-171105.
This paper was partly written during the second
author's visit to Max Planck Institute at Bonn;
he is grateful for the Institute's kind
hospitality.

\section{Structure of proof} \label{sec:structure}

We describe the idea of the proof.  

\subsection{Families of hypersurfaces and sweepouts.}
In this article we will be dealing with families 
of possibly singular hypersurfaces $\{ \G_t \}$.
For the purposes of the introduction
the reader may assume that each $\G_t$
is a boundary of a bounded open set $\Om$
and has only isolated singularities of Morse type.
In fact, $\G_t$ may differ from $\partial \Om_t$ by a finite set of points.
The precise definition of the hypersurfaces
and the sense in which the family $\{ \partial \Om_t \}$
is continuous are described in Section \ref*{preliminaries}.
To follow the outline of the proof we only
need to know that the areas of $\partial \Om_{t_i}$
approach the area of $\partial \Om_{t}$
 and the volumes
of $(\Om_{t_i} \setminus \Om_{t}) \cup (\Om_{t} \setminus \Om_{t_i})$ 
go to zero as $t_i \rightarrow t$.
(We will use the word ``volume" for the $(n+1)-$dimensional 
Hausdorff measure and ``area" for the $n$-dimensional Hausdorff measure.)

We will consider four types of special families of
hypersurfaces, which we will call ``sweepouts".
We will study the relationship between these four types of
families and that will eventually lead us to the proof of
Theorem \ref*{main}.
Slightly informally we describe them below.

1. An (ordinary) \textbf{sweepout} of a bounded set $U$
is a family of hypersurfaces $\{ \partial \Om_t \}_{t \in [0,1]}$
with $\Om_0 \cap U = \emptyset$ and $U \subset \Om_1$.

2. A \textbf{good sweepout} of $U$ is a sweepout $\{\G_t\}$ with
areas of $\G_0$ and $\G_1$ less than $\Cgs$.

The motivation for this definition is the following. 
In a ``mountain pass" type argument
we would like to apply a ``pulling tight" deformation 
to a family $\{ \G_t \}$
so that hypersurfaces that have maximal area in the family
converge (in a certain weak sense) to a stationary point
of the area functional.
When doing this we would like hypersurfaces at the ``endpoints"
$\G_0$ and $\G_1$ to stay fixed.
We will consider sweepouts of sets with
the property that every sweepout must contain
a hypersurface of area much larger than 
the area of the boundary of $U$
(see definition of a good set below).
The condition above guarantees that
$\G_0$ and $\G_1$ do not have areas close to the maximum
and so the pulling tight deformation will not affect them.


3. A \textbf{nested sweepout} of $U$ is a sweepout 
$\{ \partial \Om_t \}_{t \in [0,1]}$ with
$\Om_s \subset \Om_t$ for every $s \leq t$.
Moreover, we have $\partial \Om_t = f^{-1}(t)$
for some Morse function $f$.
Nested sweepouts are a key technical tool
in this paper.

4. A \textbf{relative sweepout} of $U$ is a family of 
hypersurfaces $\{ \Si_t \}$ with boundaries $\partial  \Si_t \subset \partial U$
obtained from some nested sweepout $\{\G_t \}$ of $U$ 
by intersecting 
$\G_t$ with the closure of $U$, $\Si_t = \G_t \cap cl(U)$.

\subsection{Widths}
For each notion of a sweepout we define a corresponding notion of \textbf{width}.
If $\mathcal{S}$ is a collection of families of hypersurfaces we set
$$W(\mathcal{S}) = \inf_{\{\G_t \} \in \mathcal{S}} \sup_t \Hn (\G_t)$$

Let $\mathcal{S}(U)$, $\mathcal{S}_{\partial}(U)$, $\mathcal{S}_g(U)$ and $\mathcal{S}_n(U)$
denote the collection of all sweepouts, relative sweepouts, good sweepouts
and nested sweepouts correspondingly.
We set $W(U)=W(\mathcal{S}(U))$ to be the width of $U$,
$W_{\partial}(U)=W(\mathcal{S}_{\partial}(U))$ to be the relative width of $U$,
$W_g(U)=W(\mathcal{S}_g(U))$ to be the good width of $U$ and
$W_n(U)=W(\mathcal{S}_n(U))$ to be the nested width of $U$.

Theorem \ref*{main} is a statement about a bounded open set $U \subset M$
with smooth boundary and the property that $\Hn (\partial U ) \leq \frac{1}{10} W_{\partial}(U)$.
A set satisfying this property will be called a \textbf{good set}.
We will show that for a good set $U$
we have the following relationships
between the quantities $W(U)$, $W_{\partial}(U)$,
$W_g(U)$ and $W_n(U)$:

\begin{equation} \label{w-wrel}
    W_{\partial}(U) \leq W_n(U) \leq W_{\partial}(U) + \Hn(\partial U)
\end{equation}

\begin{equation} \label{w-wn}
    W_n(U) = W(U)
\end{equation}

\begin{equation} \label{w-wg}
    W_g(U) = W(U)
\end{equation}

The first inequality in (\ref*{w-wrel}) follows
directly from the definition.
The reason for the second inequality in (\ref*{w-wrel})
is also clear:
to obtain a nested sweepout $\{\G_t \}$
from a relative sweepout $\{\Si_t \}$
we can take a union of $\Si_t = \G_t \cap cl(U)$
with a subset of the boundary $\partial U$ (the subset varying based on $\Si_t$).
Certain perturbation arguments will guarantee 
that a sufficiently regular nested sweepout can 
be obtained in this way.
Note that this is also a good sweepout
since it starts on a hypersurface
of area $0$ and ends on a hypersurface of 
area $\Hn(\partial U) < \Cgs$.

Equation (\ref*{w-wn}) is proved in Proposition \ref*{prop:nested}.
In fact, (\ref*{w-wn}) holds not only for good sets $U$,
but for any bounded open set $U$ with smooth 
boundary. 
The proof of (\ref*{w-wn}) is the most technical 
part of this paper.

Equation (\ref*{w-wg}) is proved below using methods from
Section \ref*{sec: no escape}.
The importance of these equations is the following:
we will use (\ref*{w-wrel})
 and (\ref*{w-wn}) to prove (\ref*{w-wg});
 we will use (\ref*{w-wg}) and some of its slightly technical generalizations to prove
Theorem \ref*{main}.

\subsection{Existence of a large slice intersecting $U$.}
Now we can outline the proof of Theorem \ref*{main}.
We would like to find a minimal hypersurface in $M$ using a min-max argument, 
developed by Almgren \cite{Al} and Pitts \cite{Pi} and 
simplified by De Lellis - Tasnady \cite{DT}.
Let $U$ be a good set.
We choose a sequence of good sweepouts of $U$ with the property
that the area of the largest hypersurface converges to $W_g(U)$.
We would like to extract an appropriate sequence of hypersurfaces whose areas converge
to $W_g(U)$, and argue that they converge (as varifolds) to a minimal hypersurface.

The problem with this argument as it stands is that this sequence 
of hypersurfaces may drift off to infinity,
and so strong convergence may not hold.  
 To handle this issue, we will argue that this sequence
of hypersurfaces can be chosen so that the intersection of every hypersurface 
with $U$ is bounded away from $0$.  
This ``localization" statement will allow us to conclude
that in the limit we obtain a minimal hypersurface with non-empty support in a small neighbourhood of $U$.

\begin{proposition}
	\label{prop:intersection_U}
	For every good set $U$ there exists a positive constant $\varepsilon(U)$ which depends only on $U$ such that the following holds.
	For every good sweepout $\{ \Gamma_t \}$ of $U$ with associated family of open sets $\{ \Om_t \}$, 
	there is a surface $\Gamma_{t'}$ in the collection
	which has area at least $W_g(U)$, and such that $\Hn( \Gamma_{t'} \cap cl(U) ) \geq \varepsilon(U)$.
\end{proposition}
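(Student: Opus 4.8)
The plan is to argue by contradiction using the relations \eqref{w-wrel}, \eqref{w-wn}, \eqref{w-wg} together with a cut-and-paste construction. Suppose that for some good set $U$ no such $\varepsilon(U)$ exists; then there is a sequence of good sweepouts $\{\Gamma^j_t\}$ with associated open sets $\{\Om^j_t\}$ such that every hypersurface $\Gamma^j_t$ of area at least $W_g(U)$ satisfies $\Hn(\Gamma^j_t \cap cl(U)) < \tfrac1j$. I would first use \eqref{w-wg}, i.e.\ $W_g(U)=W(U)$, and the fact that each $\{\Gamma^j_t\}$ is a good sweepout, to normalize: we may assume $\max_t \Hn(\Gamma^j_t) \to W(U)$. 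Thus for $j$ large, any slice whose total area is close to the maximum has almost no area inside $cl(U)$, while the slices with substantial area inside $cl(U)$ have total area bounded strictly below $W(U)$, say by $W(U) - \delta_0$ for some fixed $\delta_0>0$ (this dichotomy is the quantitative content of the negated statement, and I would make it precise by a compactness/continuity argument along $t$, using that $t\mapsto \Hn(\Gamma^j_t)$ and $t \mapsto \Hn(\Gamma^j_t\cap cl(U))$ vary continuously, as recorded in Section~\ref*{sec:structure}).

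The core of the argument is then to convert $\{\Gamma^j_t\}$ into a \emph{relative} sweepout of $U$ whose maximal area is strictly less than $W(U) = W_\partial(U)$, contradicting the definition of $W_\partial(U)$. Given the family $\{\Om^j_t\}$, I would restrict attention to the open sets $\Om^j_t \cap U$ inside $U$: as $t$ runs from $0$ to $1$, $\Om^j_t\cap U$ sweeps from $\emptyset$ to $U$, and $\partial(\Om^j_t \cap U) \subset (\Gamma^j_t \cap cl(U)) \cup \partial U$. The relative hypersurface one extracts is $\Sigma^j_t = \Gamma^j_t \cap cl(U)$, with $\partial \Sigma^j_t \subset \partial U$, so after the perturbations alluded to in the discussion following \eqref{w-wrel} this is (close to) an admissible relative sweepout. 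Its maximal area is $\sup_t \Hn(\Gamma^j_t \cap cl(U))$. Now I split by the dichotomy above: for the parameter values $t$ where $\Hn(\Gamma^j_t) \ge W(U)-\delta_0$ we have $\Hn(\Gamma^j_t\cap cl(U)) < \tfrac1j$, which is tiny; for the remaining $t$ we have $\Hn(\Gamma^j_t\cap cl(U)) \le \Hn(\Gamma^j_t) < W(U)-\delta_0$. Hence $\sup_t \Hn(\Sigma^j_t) \le \max\{\tfrac1j,\, W(U)-\delta_0\} = W(U)-\delta_0$ for $j$ large. Therefore $W_\partial(U) \le W(U)-\delta_0$. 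But by \eqref{w-wrel} and \eqref{w-wn} we have $W_\partial(U) \ge W_n(U) - \Hn(\partial U) = W(U) - \Hn(\partial U)$, and since $U$ is a good set $\Hn(\partial U) \le \tfrac1{10} W_\partial(U) \le \tfrac1{10} W(U)$; combining these, $W(U)-\delta_0 \ge W_\partial(U) \ge W(U) - \tfrac1{10}W(U)$, which forces $\delta_0 \le \tfrac1{10}W(U)$ — not yet a contradiction. So I would instead feed the improved relative sweepout back through \eqref{w-wrel}: from $\sup_t \Hn(\Sigma^j_t) \le W(U)-\delta_0$ and $W_n(U)\le W_\partial(U)+\Hn(\partial U)$ we get $W(U) = W_n(U) \le (W(U)-\delta_0) + \Hn(\partial U)$, i.e.\ $\delta_0 \le \Hn(\partial U) \le \tfrac1{10}W_\partial(U) \le \tfrac1{10}W(U)$. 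To close this I will need $\delta_0$ to be chosen \emph{after} fixing the good-set constant, specifically $\delta_0 > \Hn(\partial U)$; that is exactly where the hypothesis $\Hn(\partial U)\le \tfrac1{10}W_\partial(U)$ does its work — it guarantees a definite gap, so that a slice with area $\ge W_g(U)$ genuinely must carry area proportional to $W(U)$ inside $cl(U)$, ruling out the "escape to infinity" scenario and giving the explicit $\varepsilon(U)$.

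I expect the main obstacle to be the passage from "the largest slice has little area inside $cl(U)$" to a genuine relative sweepout with small maximal area: one must ensure that cutting $\Gamma^j_t$ with $cl(U)$ and adjoining a portion of $\partial U$ produces a family that is continuous in the required sense and is a legitimate nested/relative sweepout (the regularity and perturbation issues flagged after \eqref{w-wrel}), and one must control the interplay between the two area functions $t\mapsto\Hn(\Gamma^j_t)$ and $t\mapsto\Hn(\Gamma^j_t\cap cl(U))$ uniformly in $j$ so that the dichotomy genuinely partitions $[0,1]$ into a high-total-area set and a low-interior-area set with no overlap. The remaining steps — invoking \eqref{w-wrel}, \eqref{w-wn}, \eqref{w-wg}, and tracking constants so that the good-set inequality yields a strict gap — are then bookkeeping. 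The constant $\varepsilon(U)$ that emerges will be of the form $\varepsilon(U) = W_\partial(U) - c\,\Hn(\partial U)$ for a small universal $c$, hence positive precisely because $U$ is a good set.
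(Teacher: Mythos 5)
There is a genuine gap, and you partially flag it yourself: your final inequality chain does not close. From $\sup_t \Hn(\Sigma^j_t)\le W(U)-\delta_0$ together with \eqref{w-wrel} and \eqref{w-wn} you only obtain $\delta_0\le \Hn(\partial U)$, and this is perfectly consistent — it is not a contradiction, because $\delta_0$ is \emph{not} a parameter you get to choose. It is dictated by the putative counterexample sweepout (it is the gap, below the maximum, of the areas of those slices that still carry substantial area inside $cl(U)$), and nothing prevents it from being much smaller than $\Hn(\partial U)$; indeed for a fixed sweepout it is produced by a compactness argument in $t$ and can degenerate as $j\to\infty$. The good-set hypothesis $\Hn(\partial U)\le \tfrac1{10}W_{\partial}(U)$ cannot manufacture the inequality $\delta_0>\Hn(\partial U)$, so the "escape to infinity" scenario is numerically compatible with the width relations \eqref{w-wrel}, \eqref{w-wn}, \eqref{w-wg} alone, and no purely bookkeeping argument with these three relations can rule it out. (A secondary issue: $\Sigma^j_t=\Gamma^j_t\cap cl(U)$ is a relative sweepout in the paper's sense only after the family has been made nested, and after applying Proposition \ref*{prop:nested} the hypothesis about slices of the \emph{original} family does not transfer to the new slices; this can be worked around, but only by tracking volumes of $\Om_t\cap U$ rather than areas, which is what the actual proof does.)

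The missing idea is that $\varepsilon(U)$ must be fixed \emph{in advance}, via the isoperimetric profile of $U$ and the deformation statement of Lemma \ref*{epsilon}: sets $\Om$ with $\Hnn(\Om\cap U)$ small (or with $\Hnn(U\setminus\Om)$ small) can be pushed entirely out of (resp.\ over) $U$ at an area cost below $\Cgs$, while sets whose intersection volume is in the middle range are forced to have $\Hn(\partial\Om\cap U)\ge 2\varepsilon$. With this $\varepsilon$, the contradiction is obtained not against the width inequalities but against the definition of $W_g(U)$ itself: one uses the volume function $t\mapsto \Hnn(\Om_t\cap U)$ (which is genuinely continuous) and $\delta$-continuity of $t\mapsto\Hn(\G_t\cap U)$ to isolate an interval $[t_0,t_1]$ on which areas stay below $W_g(U)-\delta$ while the volume crosses from $\le\varepsilon_0$ to $\ge \Hnn(U)-\varepsilon_0$; replaces that portion by a nested family (Proposition \ref*{prop:nested}); finds a slice lying mostly inside $U$ (Lemma \ref*{long slice}); splices in two nearly area-minimizing boundaries of area $\lesssim 3\Hn(\partial U)$ via Lemma \ref*{lem:split}; and finally pushes the small-volume ends out of $U$ using Lemma \ref*{epsilon}. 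The result is a good sweepout of $U$ with maximal area strictly below $W_g(U)$, which contradicts $W_g(U)$ being the infimum over good sweepouts. Your proposal never constructs such a competitor sweepout, and without it the argument cannot conclude; relatedly, the constant you predict, $\varepsilon(U)=W_{\partial}(U)-c\,\Hn(\partial U)$, is of the order of the width, whereas the constant the proof actually yields is the (typically much smaller) isoperimetric one from Lemma \ref*{epsilon}.
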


Theorem \ref*{main} will follow  
by modifying arguments in \cite{DT} (see Section \ref*{sec: convergence}).
In the remainder of this section we focus 
on the proof of Proposition \ref*{prop:intersection_U}.

We explain how we choose $\varepsilon(U)$.
In Section \ref*{sec: no escape} (Lemma \ref*{epsilon}) we will show that for every $U$ there exists 
$\varepsilon_0>0$ with the property that every $\Om$
which intersects $U$ in volume at most $\varepsilon_0$
or contains all of $U$ except for a set of volume at most $\varepsilon_0$
can be deformed so that its boundary does not intersect $U$ 
and the areas of the boundaries in the deformation process are controlled.
Specifically, if $\Hnn(\Om \cap U) \leq \varepsilon_0$ then there exists a family
$\{ \Om_t \}_{t \in [0,1]}$, such that
$\Om_0 \cap U = \emptyset$ and $\Om_1 = \Om$;
if $\Hnn(U \setminus \Om) \leq \varepsilon_0$
then there exists a family
$\{ \Om_t \}_{t \in [0,1]}$, such that
$\Om_1 \cap U = U$ and $\Om_0 = \Om$. In both cases the 
boundaries of $\Om_t$ satisfy

\begin{equation} \label{pushing out small set}
    \Hn(\partial \Om_t) < \Hn(\partial \Om) + 2 \Hn(\partial U)
\end{equation}

Having fixed $\varepsilon_0$ with this property
we define $\varepsilon(U) = \varepsilon(\varepsilon_0)>0$
to be such that every $\Om$ with
$\min \{\Hnn(\Om \cap U), \Hnn(U \setminus \Om)\} \geq \varepsilon_0/2$
has $\Hn(\partial \Om \cap U) > \varepsilon(U)$.
Existence of such $\varepsilon$ follows from the properties
of the isoperimetric profile of $U$.
In addition, we also require that $\varepsilon \leq \frac{\Hn(\partial U)}{5}$.


Suppose now that Proposition \ref*{prop:intersection_U} fails for this value of $\varepsilon(U)$.  
Let $V(t) = \Hnn(\Om_t \cap U)$ and
$A(t) = \Hn(\partial \Om_t \cap U)$. $V$ is a continuous function of $t$, $t \in [0,1]$, but $A(t)$ may not be continuous.
However, the family $\{ \partial \Om_t \}$
can be perturbed to make
$A(t)$ continuous.
In the proof of Proposition \ref*{prop:intersection_U} in Section \ref*{sec: no escape}
we prove a weaker assertion that
$A(t)$ is ``roughly" continuous after a small perturbation,
in the sense that the oscillation of $A$ at a point $t$ is at most $\varepsilon/10$;
this turns out to be sufficient for what we need.
For the purposes of this overview we will assume that $A(t)$ is actually continuous.

Continuity of $A$ and $V$ and the fact that $\{ \partial \Om_t \}$ is a sweepout
imply that there exists an interval $[a,b] \subset [0,1]$
with $\Hn(\partial \Om_t \cap U) \geq \varepsilon$ for all $t \in [a,b]$;
$\Hn(\partial \Om_a \cap U) = \varepsilon$ and $\Hn(\partial \Om_b \cap U) = \varepsilon$; 
$\Hnn(\Om_a \cap U) < \varepsilon_0/2$ and $\Hnn(\Om_b \cap U) > \Hnn(U)-\varepsilon_0/2$.
By our assumption this implies
$\Hn(\partial \Om_t) < W_g(U)$ for all $t \in [a,b]$.
Since $\Hn(\partial \Om_t)$ is a continuous function of $t$
there exists a real number $\delta > 0$
such that $\partial \Om_t$ has area at most $W_g(U) - \delta$ for $t \in [a,b]$.

Let $\tilde{U}= U \cap (\Om_b \setminus cl(\Om_a))$.
The last paragraph implies that
$W(\tilde{U}) \leq W_g(U) - \delta$.
The boundary of $\tilde{U}$ satisfies
$\Hn( \partial \tilde{U}) \leq \Hn( \partial U) + 2 \varepsilon$.

Equality (\ref*{w-wg}), whose proof is outlined below,
implies that a sweepout
of a good set can be replaced with a good sweepout,
while increasing the maximal area by an arbitrarily small amount.
In Section 7 we prove a more general result that 
if $U$ is a good set and $\tilde{U}$ is a
a subset of $U$ whose volume and boundary area are
sufficiently close to that of $U$, then every sweepout
of $\tilde{U}$ can be upgraded to a good sweepout.


It follows then that
$W_g(\tilde{U}) = W(\tilde{U}) \leq W_g(U) - \delta$
and, hence, there exists a good sweepout 
$\{\partial \tilde{\Om}_t \}_{t \in [0,1]}$ of $\tilde{U}$
with areas of all hypersurfaces at most $W_g(U) - \delta/2$.
By the definition of a sweepout 
$\tilde{\Om}_0 \cap U \subset U \setminus 
\tilde{U} \subset (U \cap \Om_a ) \cup (U \setminus \Om_b)$
and hence $\Hnn(\tilde{\Om}_0 \cap U) \leq \varepsilon_0$.
Also, since $\{\partial \tilde{\Om}_t \}$
is a good sweepout, $\partial \tilde{\Om}_0$ has
area at most $\Cgs + 10 \varepsilon$.
By (\ref*{pushing out small set}) we can deform
$\tilde{\Om}_0$ to a set that does not intersect $U$ 
through open sets with boundary area at most 
$W_g(U) - \delta/4$.
Similarly, we can deform $\tilde{\Om}_1$
to an open set that contains $U$
through open sets with boundary area at most 
$W_g(U) - \delta/4$.
We conclude that there exists a sweepout of $U$
by hypersurfaces of area at most 
$W_g(U) - \delta/4$. Hence, $W(U) \leq W_g(U) - \delta/4$,
which contradicts (\ref*{w-wg}). This finishes the proof
of Proposition \ref*{prop:intersection_U}.

\subsection{The good width equals width.}
In the rest of this section we 
describe how (\ref*{w-wg}) follows from (\ref*{w-wrel}) and (\ref*{w-wn}).  The argument is illustrated in Figure \ref*{fig:localization}.
We start with a sweepout $\{ \partial \Om_t \}$ of a good set $U$ by hypersurfaces
of area at most $W(U) + \delta$.
By (\ref*{w-wn}) we can assume that $\{ \partial \Om_t \}$
is a nested sweepout.
Next, we argue (cf. Lemma \ref*{long slice}) that there is a hypersurface $\partial \Om_{t'}$
with $t' \in [0,1]$ such that $\Hn(\partial \Om_{t'} \setminus U)$ has area
comparable to that of the boundary of $U$.
Indeed, by (\ref*{w-wrel}) there is a hypersurface with a large intersection
with $U$, that is, $\Hn(\partial \Om_{t'} \cap cl(U)) \geq W_n(U) - \Hn(\partial U)$.
The complement then must satisfy 
$\Hn(\partial \Om_{t'} \setminus cl(U)) \leq W(U)-W_n(U) + \Hn(\partial U) + \delta = \Hn(\partial U) + \delta$.

\begin{figure} 
	\centering
	\includegraphics[scale=0.7]{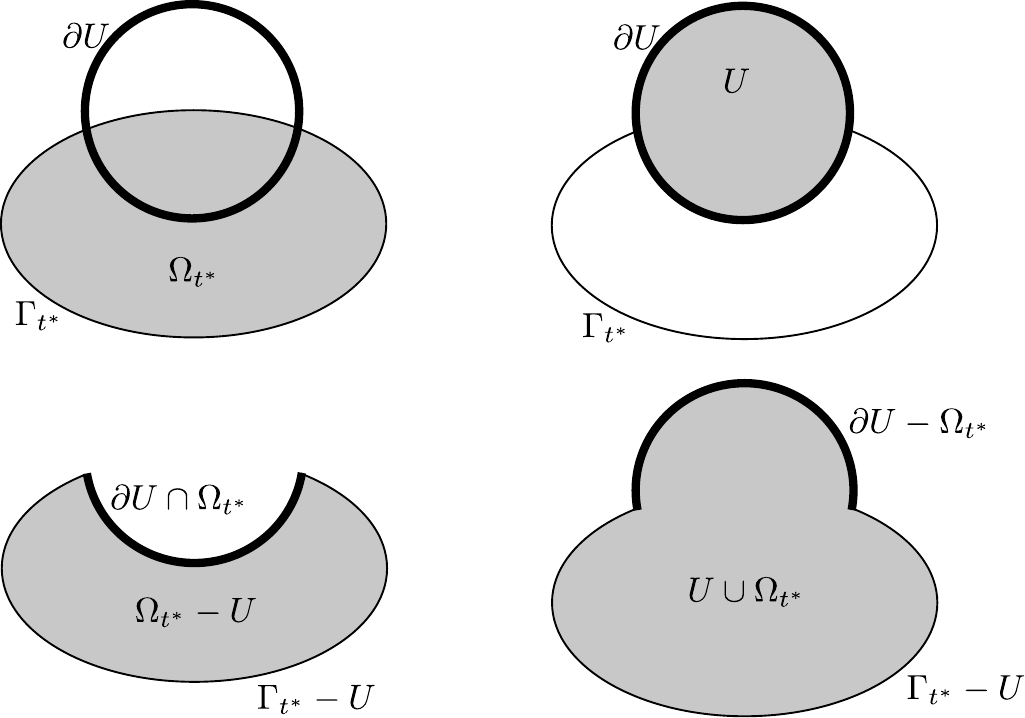}
	\caption{Cut and paste argument in the proof
	of (\ref*{w-wg})}
	\label{fig:localization}
\end{figure}

Now consider $\Om_{t'} \setminus U$. 
Since $\{ \partial \Om_t \}$ is nested this set contains
$\Om_0$ and is contained in $\Om_1$.
By the argument in the previous paragraph 
we have $\Hn(\partial (\Om_{t'} \setminus U)) \leq 2 \Hn(\partial U) + \delta$.
Let $A$ denote the infimal value of $\Hn (\partial \Om)$
over all open sets $\Om$ with $\Om_0 \subset \Om \subset \Om_{t'} \setminus U$.
Since $\Om_{t'} \setminus U$ is one of such sets we have
$$A \leq 2 \Hn(\partial U) + \delta$$

Let $\tilde{\Om}$ denote a set as above with 
$\Hn (\partial \tilde{\Om}) \leq A+ \delta$.
We replace sweepout $\{ \partial \Om_t \}$
with a new sweepout $\{ \partial (\tilde{\Om} \cup \Om_t) \}$.
Perturbation arguments will guarantee that we can smooth out the 
corners of these hypersurfaces to obtain a sufficiently regular family.
This family starts on a surface $\partial \tilde{\Om}$
of area less than $\Cgs$ 
and ends on $\Om_1$.
Moreover, it follows from the fact that $\partial \tilde{\Om}$ is $\delta$-nearly area minimizing
hypersurface that the area of $\partial (\tilde{\Om} \cup \Om_t)$
is bounded by $W+2 \delta$ (cf. Lemma \ref*{lem:split}).

Similarly, we can replace this sweepout with a new sweepout
that ends on a hypersurface of area less than $\Cgs$,
without increasing the areas of other hypersurfaces by more
than $\delta$. We conclude that $W_g(U) \leq W(U) + 3 \delta$,
but since $\delta>0$ was arbitrary (\ref*{w-wg}) follows.

The importance of nested sweepouts comes from the fact that it allows us to choose
nearly minimizing hypersurfaces like $\partial \tilde{\Om}$ and perform cut and paste procedures
as above without increasing the area significantly.
The ideas used in the proof of (\ref*{w-wn}) 
go back to \cite{CR} 
by the first author and Regina Rotman.  In that article, the authors were interested in
nested homotopies of curves, whereas here we use sufficiently regular cycles.  

\subsection{Open questions} \label{questions}
We list some open questions related to Theorem \ref*{main}.

1. For a positive real number  $\alpha$ 
we say that $U$ is an $\alpha$-good set if
$\Hn(\partial U) \leq \alpha W_{\partial}(U)$.
Theorem \ref*{main} asserts that if a complete
manifold $M$ contains a $\frac{1}{10}$-good set,
then there is a minimal hypersurface of finite
volume in $M$ which intersects a small neighbourhood of $U$.

It seems possible to improve the value of $\alpha$ to 
$\frac{1}{2}-\varepsilon$ using the methods of this paper by proving a sharper version
of Lemma \ref*{epsilon} and with more careful estimates in several other places.

\textit{Question:} What is the maximal value of $\alpha$
for which the conclusion of Theorem \ref*{main} holds?
It is conceivable that it may be true
for every positive $\alpha<1$.

2. In \cite{MN2} Marques and Neves show that a min-max
minimal hypersurface has a connected component of 
Morse index 1,
assuming that the manifold has no one-sided
hypersurfaces
(see \cite{MR}, \cite{So1}, \cite{Zh1}, \cite{Zh2} for previous results in that direction).
Is it possible to adapt their arguments
to construct a minimal hypersurface of finite volume and Morse
index 1 for every complete manifold without one-sided hypersurfaces and
satisfying the assumptions
of Theorem \ref*{main}?

3. In Theorem \ref*{main_full} we show that for an arbitrarily small
$\delta>0$ there exists a minimal hypersurface of finite
volume intersecting the $\delta$-neighbourhood of a good
set $U$. Does there exist a minimal hypersurface
of finite volume intersecting $cl(U)$? It is plausible that 
this result follows from a refinement of some of the arguments
in Section \ref*{sec: convergence} or from an appropriate compactness argument.

4. In \cite{Gr} it is shown that if a non-compact manifold
$M$ does not admit a proper Morse function $f$,
such that all non-singular fibers of $f$
are mean-convex, then $M$ contains a minimal
hypersurface of finite volume.
The following question was suggested to us by Misha Gromov:

\textit{Question:} Do there exist
manifolds of finite volume
that admit a 
Morse function $f$, such that all non-singular
level sets of $f$ have positive mean curvature?

More generally, do there exist good sets $U$ (in the sense
defined in this paper) which admit Morse foliations
by mean convex hypersurfaces (with boundaries of
the hypersurfaces
contained in the boundary of $U$)?

\section{Preliminaries} \label{preliminaries}

We begin with fixing notation and 
introducing several technical definitions which we will use throughout this article.

\begin{tabular}{ l l }
  $\mathcal{H}^k$ & $k-$dimensional Hausdorff measure  \\
  $cl(U)$ & closure of the set $U$ \\
  $B_r(x)$ & open ball of radius $r$ centered at $x$ \\
  $N_r(U)$ & the set $\{x \in M: d(x,U)< r\}$  \\
  $An(x,t_1,t_2)$ & the open annulus $B_{t_2}(x) \setminus cl( B_{t_1}(x))$
\end{tabular}

\vspace{0.2in}

Following De Lellis - Tasnady we make the following definitions.

\subsection{Families of hypersurfaces
and sweepouts}

\begin{definition} \label{hypersurface_def}
\textbf{Family of hypersurfaces}
A family $\{\Gamma_t\}$, $t\in [0,1]$,
of closed subsets of $M$ with
finite Hausdorff measure 
will be called a family of hypersurfaces if:

\vspace{0.2in}

(s1) For each $t$ there is a finite set 
$P_t \subset M$ such that $\Gamma_t$ is a smooth
hypersurface in $M \setminus P_t$;

(s2) $\Hn(\Gamma_t)$ depends smoothly 
on $t$ and $t \rightarrow \Gamma_t$
is continuous in the
Hausdorff sense;

(s3) on any $U \subset \subset M \setminus P_{t_0}$, $\Gamma_t \rightarrow \Gamma_{t_0}$
smoothly in $U$ as $t \rightarrow t_0$;

(s4) (no concentration of mass) for every point $x \in M$ 
we have $\limsup_{r \rightarrow 0} \sup_{t \in [0,1]}  \Hn(\G_t \cap B_r(x)) = 0$.
\end{definition}

\begin{definition} \label{def: sweepout}
\textbf{Sweepout}
Let $U$ be an open subset of $M$.
$\{\Gamma_t \}$, $t \in [0,1]$, is a sweepout of $U$ 
if it satisfies (s1)-(s4) and there exists a family $\{  \Om_t\}$, 
$t \in [0,1]$, of open
sets of finite Hausdorff measure, such that

\vspace{0.2in}

(sw1) $(\Gamma_t  \setminus \partial   \Om_t)
\subset P_t$ for any $t$;

(sw2) $  \Om_0 \cap U = \emptyset$ and 
$U \subset   \Om_1$;

(sw3) $\Hnn(  \Om_t \setminus   \Om_s) + \Hnn(\Om_s \setminus   \Om_t) 
\rightarrow 0$ as $t \rightarrow s$.

\vspace{0.2in}

For a sweepout $\{ \Gamma_t \}$ 
we will say that $\{   \Om_t \}$
is the corresponding family of open sets 
if it satisfies (sw1) - (sw3).

\end{definition}

\begin{definition}
\textbf{Good sweepouts, nested sweepouts
and relative sweepouts}

A \textbf{good sweepout} $\{ \G_t \}$
is a sweepout of $U$ which in addition satisfies:

(sw$_g$) $\Hn(\Gamma_0)\leq \Cgs$ and 
$\Hn(\Gamma_1) \leq  \Cgs$.

\vspace{0.2in}

A \textbf{nested sweepout} $\{ \G_t \}$
is a sweepout of $U$ which in addition satisfies:

(sw$_n$) there exists a Morse function 
$f: M \rightarrow [-1, \infty)$,
such that $\G_t = f^{-1}(t)$, $t \in [0,1]$;
the corresponding family of open sets is given by
$\Om_{t} = f^{-1}((-\infty, t))$.

\vspace{0.2in}

Suppose $\partial U$ is a smooth manifold
and  $\{ \G_t \}$ is a nested sweepout of $U$ with 
the corresponding family of open sets $\{\Om_t\}$.
Set $\Si_t = (cl(U) \cap \G_t)$.
We will say that $\{ \Si_t  \}$ is a
\textbf{relative sweepout} of $U$.

\end{definition}

\begin{definition}
\textbf{Widths and good sets}
As described in Section \ref*{sec:structure}
the widths $W(U)$, $W_{\partial}(U)$,
$W_g(U)$ and $W_n(U)$ are defined as the min-max quantities 
corresponding to sweepouts, relative sweepouts, good sweepouts
and nested sweepouts respectively.

A good set $U \subset M$ is a bounded open set with smooth boundary
and $\Hn(\partial U) \leq \frac{1}{10} W_{\partial}(U)$.
\end{definition}

\subsection{Smoothing corners.}  \label{smoothing}

Let $N \subset M$ be an open subset
and suppose $\Si_1 \subset \partial N$ and 
$\Si_2 \subset \partial N$ are $n$-dimensional
submanifolds of $M$, such that
the interiors of $\Si_1$ and $\Si_2$ are disjoint,
$\Si_1 \cup \Si_2 = \partial N$ and 
$\partial \Si_1 \cap \partial \Si_2 = C$ is 
a compact $(n-2)$-dimensional submanifold
of $M$.

We say that $\partial N$ is a manifold
with corner $C$ if for every sufficiently
small neighbourhood $U$ of a point $x \in C$
there exists a diffeomorphism $\phi$
from $U$ to $\mathbb{R}^{n+1}$ with
$\phi(N)= \mathbb{R}_+ \times \mathbb{R}_+ \times 
\mathbb{R}^{n-1}$,
$\phi(\Si_1) = \{x_1 = 0 \}$,
$\phi(\Si_2) = \{x_2 = 0 \}$
and $C = \{x_1 = x_2 = 0 \}$.

There is a standard construction
of smoothing (or straightening) the corner 
$C$ of a manifold with corner (see \cite[Section 7.5]{Mu}).
We briefly describe it here,
because we use it several times
in this paper.

Fix $\delta>0$. We construct a smooth hypersurface $\Sigma  \subset cl(N)$,
such that $\Sigma$ coincides with $\partial N$
outside of $N_{\delta}(C)$. 

Define cylindrical coordinates $y=(x, \theta, r)$ on $cl(N_{\delta}(C) \cap N)$,
where $x \in C$, $r$ denotes the distance to $C$ and
$\theta \in S^1$ is the angle. 
We make a choice of coordinates so that for a fixed $x$ 
geodesic ray $t_x^1= \{\theta = 0 \}$ is tangent to $\Sigma_1$ at $x$
and geodesic ray $t_x^2(r) = \{ \theta = \alpha(x)\}$ is tangent to $\Sigma_2$ 
at $x$ for some smooth function $\alpha(x)$.
In these coordinates two-dimensional disc 
$D_x(\delta)=\{(x, \theta, r): 0 \leq r \leq \delta, 
\theta \in S^1\}$ intersects $\Sigma_1$ and $\Sigma_2$ in two curves, 
$s_x^1$ and $s_x^2$ correspondingly, meeting at the point $(x,0,0)$, and $cl(N) \cap D_x$ is 
the region bounded by $s_x^1 \cup s_x^2$.
We may assume that $\delta>0$ is sufficiently small,
so that the following two conditions are satisfied
for $i=1,2$ and all $x \in C$:

a)  every tangent line to $s_x^i$ in $D_x(\delta)$
meets $t_x^i$ at an angle at most $\frac{\pi}{8}$;

b)  $s_x^i$ does not intersect geodesic rays $\theta = \frac{\alpha(x)}{8}$
and $\theta = \frac{7\alpha(x)}{8}$.

Let $\gamma_x$ be a family of smooth curves in $D_x(\delta/2)$,
so that in $D_x(\delta/4)$ the curve is given by a smooth convex
function $r= r_x(\theta)$
and in $D_x(\delta/2) \setminus D_x(\delta/4)$
it coincides with the rays $\theta = \frac{\alpha(x)}{8}$
and $\theta = \frac{7\alpha(x)}{8}$.
We have that in the annulus $D_x(\delta/2) \setminus D_x(\delta/4)$
curve
$\gamma_x$ consists of two connected components each graphical
over $s_x^1$ and $s_x^2$ correspondingly.
It follows that we can extend $\gamma_x$ to $D_x(\delta)$
smoothly so that in the annulus $D_x(\delta) \setminus D_x(3\delta/4)$
curve $\gamma_x$ coincides with $s_x^1$ and $s_x^2$.

We make several observations about this construction.

1. Different smoothings $\Sigma$ corresponding to different choices 
of curves $\gamma_x$ in the above construction are all isotopic.

2. For any $\varepsilon> 0$ curves $\gamma_x$ can be chosen in such a way that 
$\Hn(\Sigma) < \Hn(\partial N) + \varepsilon$.

3. Smoothing can be done parametrically. Given a foliation of a subset of $M$ by hypersurfaces with corners
the above construction can be applied to the whole family in such a way that
we obtain a foliation by a family of smooth hypersurfaces.

4. For all $\delta>0$ sufficiently small there exists a choice
of $\Sigma$ and a constant $c$ that depends on $M$, $N$ and $C$, so that 
$\Hn(  N_{10 \delta}(C) \cap \partial N_{2 \delta}(\Sigma)) 
\leq c \delta$.

The last observation will be important in the proof of 
Lemma \ref*{gluing_separated}.

It will be convenient to introduce one more definition.

\begin{definition} \label{delta-perturbation}
Let $\Om \subset M$ be a bounded open subset 
and $\partial \Om$ is a manifold with corner
and $\delta>0$.
We will say that $\Om_{+ \delta}$ is an outward 
$\delta$-perturbation of $\Om$ if the following holds:

(1) $\Om \subsetneqq \Om_{+ \delta} \subset N_{\delta}(\Om)$;

(2) there exists a nested family of open sets
$\{\Xi_t\}_{t \in [0,1]}$ and a smooth isotopy $\Si_t = \partial \Xi_t$,
such that $\Si_0$ is a smoothing of $\partial \Om$,
$\Xi_1 = \Om_{+ \delta}$
and $\Hn(\Si_t) < \Hn(\partial \Om) +\delta$
for all $t \in [0,1]$.

We will say that $\Om_{- \delta}$ is an inward 
$\delta$-perturbation of $\Om$ if the following holds:

(1)' $\Om \setminus N_{\delta}(\partial \Om) \subset \Om_{-\delta} \subsetneqq \Om$;

(2)' there exists a nested family of open sets
$\{\Xi_t\}_{t \in [0,1]}$ and a smooth isotopy $\Si_t = \partial \Xi_t$,
such that $\Si_1$ is a smoothing of $\partial \Om$,
$\Xi_0 = \Om_{- \delta}$
and $\Hn(\Si_t) < \Hn(\partial \Om) +\delta$
for all $t \in [0,1]$.

\end{definition}

\section{Morse foliations with controlled
area of fibers.}


Here we present several results about
concatenating different Morse foliations and
controlling areas of fibers of
Morse functions.

For PL Morse functions Sabourau proved similar results in \cite{Sa}.

\subsection{Gluing Morse foliations.}

Let $N^{n+1} \subset M$ be a compact submanifold of $M$ 
with boundary.
We will say that a Morse function $f: N \rightarrow \mathbb{R}$ is
$\partial$-transverse if 

(1) there exists an extension $\bar{f}$ of $f$ to an open neighbourhood of $N$ in $M$, such that all critical points are 
isolated, non-degenerate and lie in the interior of $N$;

(2) the restriction of $f$ to $\partial N$
is a Morse function.

\begin{lemma} \label{morse}
Let $N^{n+1} \subset M^{n+1}$ be a compact submanifold 
with non-empty boundary and $f: N \rightarrow [a,b]$
be a $\partial$-transverse Morse function. Let $\Sigma^{n}$ be
a closed submanifold of $\partial N$.

For every $\varepsilon >0$ there exists a Morse function $g: N \rightarrow [a,b]$,
such that the following holds:

(1) 
$g^{-1}(b) = \Sigma$;

(2) $f^{-1}([a,t)) \subset N_{\varepsilon/2} (g^{-1}([a,t))) \subset N_{\varepsilon}(f^{-1}([a,t)))$;

(3) $ \Hn(g^{-1}(t)) \leq \Hn(\partial f^{-1}([a,t])) + \varepsilon$;

(4) If $dist(x, f(\Sigma))> \varepsilon$ then $f^{-1} (x) = g^{-1}(x)$.
\end{lemma}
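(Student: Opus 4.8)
The plan is to leave $f$ untouched outside a thin neighbourhood of $\Sigma$ and, inside it, to \emph{fold} the sublevel sets of $f$ against $\partial N$: the leaf of $g$ at height $t$ will be the boundary of the set $f^{-1}([a,t])$ enlarged, near $\Sigma$, by a thin collar inside $N$ of the portion of $\partial N$ lying below level $t$, the thickness of that collar chosen so that the enlarged sets exhaust $N\setminus\Sigma$ as $t\uparrow b$. The point of this shape is that a folded leaf carries, beyond the area of $f^{-1}(t)$, essentially only the area of the swallowed piece of $\partial N$, and that is exactly the extra term $\Hn(\partial f^{-1}([a,t]))$ appearing in (3).

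First I would set up the localisation. Fixing $\varepsilon>0$ and using that $f$ is $\partial$-transverse (so its critical points are isolated and lie in the interior of $N$), and after a routine preliminary modification of $f$ near $f^{-1}(\{a,b\})$ whose cost is absorbed by shrinking $\varepsilon$, I would pick $\eta>0$ so small that $N_\eta(\Sigma)$ contains no critical point of $f$, that $N_\eta(\Sigma)\subset f^{-1}(N_{\varepsilon/2}(f(\Sigma)))\cap N_{\varepsilon/10}(\Sigma)$, and that a collar $N_\eta(\Sigma)\cap N_\eta(\partial N)\cong(\partial N\cap N_\eta(\Sigma))\times[0,\eta)$ with coordinate $\rho=\operatorname{dist}(\cdot,\partial N)$ is available. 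Declaring $g=f$ on $N\setminus N_\eta(\Sigma)$ makes (4) automatic; and since the sublevel sets of $f$ and $g$ will then differ only inside $N_{\varepsilon/10}(\Sigma)$, both inclusions of (2) reduce to a local statement inside $N_\eta(\Sigma)$, and (3) only has to be checked for $t$ with $\operatorname{dist}(t,f(\Sigma))<\varepsilon/2$.

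Then I would carry out the fold. Choosing a smoothly $t$-varying family of thicknesses $\delta_t\colon\partial N\cap N_\eta(\Sigma)\to[0,\eta]$, equal to $\eta$ away from the part of $\Sigma$ on which $f$ is close to $t$ and decreasing to $0$ on $\Sigma$ as $t\uparrow b$, I would set
\[
V_t=f^{-1}([a,t])\ \cup\ \{\,x\in N_\eta(\Sigma)\cap N_\eta(\partial N)\ :\ \delta_t(\pi(x))<\rho(x)<\eta,\ \ f(\pi(x))\le t\,\},
\]
$\pi$ being the collar projection to $\partial N$. These sets are open and nested, agree with $f^{-1}([a,t))$ off $N_\eta(\Sigma)$, and increase to $N\setminus\Sigma$ as $t\uparrow b$. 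Smoothing the corner where the bulge meets $\partial f^{-1}([a,t])$ at a scale $\ll\eta$, done parametrically in $t$ exactly as in Section \ref*{smoothing} (and perturbing, if need be, to make nondegenerate the interior critical points the fold introduces), I would let $g\colon N\to[a,b]$ be the Morse function — Morse on $g^{-1}([a,b))$, with $\Sigma$ a regular boundary level set at height $b$ — whose open sublevel sets are the smoothed $V_t$. Then (1) is immediate from $\bigcup_{t<b}V_t=N\setminus\Sigma$; (2) holds because the symmetric difference of $V_t$ and $f^{-1}([a,t))$ lies in $N_{\varepsilon/10}(\Sigma)$ and within $\varepsilon/2$ of both sets; and for (3) one reads $g^{-1}(t)$ as a piece of $f^{-1}(t)$, plus the inner face $\{\rho=\delta_t\circ\pi\}$ of the bulge — a graph over a subset of $\partial N\cap f^{-1}([a,t])$, of area at most $\Hn(\partial N\cap f^{-1}([a,t]))+\varepsilon/2$ once the collar and the $\delta_t$ have small enough gradient — plus the smoothed corner, of area $<\varepsilon/2$, so that $\Hn(g^{-1}(t))\le\Hn(\partial f^{-1}([a,t]))+\varepsilon$.

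The hard part will be the quantitative estimate in (3): arranging that the inner faces of the bulges pick up only the area of the swallowed portion of $\partial N$, up to $\varepsilon$, rather than the possibly much larger area of a steep graph over it. I would handle this by first flattening $f$ in the collar of $\partial N$ — so that the level sets of $f|_{\partial N}$, pushed slightly into $N$, are almost the faces $\{\rho=\mathrm{const}\}$ — and then taking $\eta$ and the family $\{\delta_t\}$ with arbitrarily small gradient, so each inner face is a nearly isometric graph over its shadow in $\partial N$. The accompanying simultaneous smoothing of all the corners is then precisely the parametric form of the construction recalled in Section \ref*{smoothing}, whose observation 2 accounts for the $O(\varepsilon)$ cost and whose observation 3 keeps the family smooth.
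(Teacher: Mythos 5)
Your construction goes the wrong way around $\Sigma$, and this breaks conclusion (1). You define $V_t=f^{-1}([a,t])\cup\{\delta_t(\pi(x))<\rho(x)<\eta,\ f(\pi(x))\le t\}$, so $V_t$ contains the closed sublevel set of $f$, and that sublevel set already contains $\Sigma\cap f^{-1}([a,t])$, which is a nonempty open (in $\partial N$) piece of $\Sigma$ as soon as $t>\min_\Sigma f$. Hence $\bigcup_{t<b}V_t$ is not $N\setminus\Sigma$ (it contains $\Sigma\cap f^{-1}([a,b))$), and any function $g$ whose sublevel sets are smoothings of the $V_t$ satisfies $g<b$ on most of $\Sigma$, so $g^{-1}(b)=\Sigma$ fails. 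The same miscount infects (3): over every base point $y$ with $f(y)\le t-\operatorname{Lip}(f)\,\eta$ the entire collar fibre already lies in $f^{-1}([a,t])$, so your ``inner face'' $\{\rho=\delta_t\circ\pi\}$ is interior to $V_t$ there rather than part of its boundary; the actual leaves are essentially $f^{-1}(t)$ with a small flange near $\{f|_{\partial N}=t\}$, and as $t\uparrow b$ they converge to $f^{-1}(b)$, not to $\Sigma$. To realize the fold you describe you must \emph{delete} a collar rather than add one: take the region $f^{-1}([a,t])\setminus U_{(1-t)r_0}$, where $U_r$ is the collar of $\Sigma$ of depth $r$ — this is exactly the paper's construction — so that $\Sigma$ is reached only at the final time and the extra boundary piece is an inward parallel copy of $\Sigma\cap f^{-1}([a,t])$, which is what produces the bound (3).

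Even after this correction, the step you dispose of with ``perturbing, if need be, to make nondegenerate the interior critical points the fold introduces'' is the actual content of the paper's proof. At the finitely many values $p_i$ where $f|_{\Sigma}$ is critical, the level set $f^{-1}(t)$ fails to meet the collar face transversally, the corner locus degenerates, and the shadow region $\{f|_{\Sigma}\le t\}$ changes topology; the smoothing of Section \ref*{smoothing}, which handles a fixed transverse corner along a compact $(n-2)$-dimensional submanifold (also parametrically), does not cover these moments. The paper treats them by passing to a normal form near $y_i=\phi(x_i,(1-p_i)r_0)$ and splitting into two cases according to the sign of $s_i=\langle\nabla f/|\nabla f|,n_i\rangle$: for $s_i=-1$ the leaves can be kept graphs through the transition, while for $s_i=+1$ one must insert an elementary cobordism of index $\lambda$ (a surgery of type $(\lambda,n+1-\lambda)$, via Milnor) so that the family is the level-set family of a single Morse function, with the areas increasing only by a controlled amount. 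A generic perturbation does not by itself supply either this local foliation structure or the area bound (3) across these times, so your proposal is missing the core argument of the lemma.
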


\begin{proof}

The idea of the proof is shown in Figure \ref*{fig:slice}.

We will define a singular foliation  
$\Si_t$, $t \in [0,1]$, of $N$ with 
only finitely many singular leaves that
have non-degenerate singularities and with $\Si_1 = \Sigma$.
It follows then that there exists a Morse function
 $g(x)$ with $g^{-1}(t) = \Si_t$.
We will prove that this foliation satisfies the
desired upper bound on the area. 
The surfaces in the foliation will coincide with $f^{-1}(t)$
whenever $f^{-1}(t)$ is sufficiently far from 
$\Sigma$ and so (4) will also follow.

\begin{figure} 
	\centering
	\includegraphics[scale=0.75]{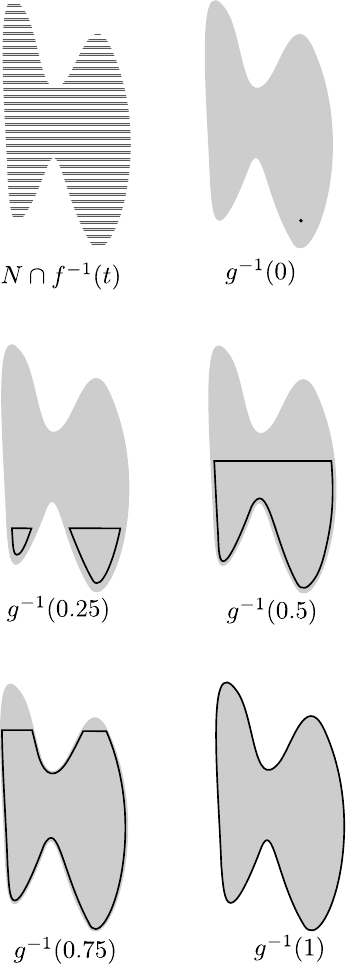}
	\caption{Constructing a singular foliation of $N$.}
	\label{fig:slice}
\end{figure}

Choose $r_0 \in (0, \varepsilon)$, be sufficiently small, so that
the tubular neighbourhood 
$U= N_{2 r_0}(\Sigma) \cap N$
does not intersect
critical points of $f$ and
there exists a diffeomorphism $\phi$ from 
$\Sigma \times [0,2 r_0)$ to
$U$. Let $\phi (x,r)$, $x \in \Sigma$, $r \in [0,r_0)$  
denote the Fermi coordinates on $U$.
For $r_0$ sufficiently small we may assume that $\Hn((\Sigma, r))\leq \Hn(\Sigma) + \frac{\varepsilon}{2}$ for $r \in [0,r_0]$.
Let $U_r = \{ \phi (x,r'): r' \leq r \}$.
Let $\varepsilon_0=\varepsilon_0(r_0)>0$ be a small constant to be specified
later and satisfying $\varepsilon_0 \rightarrow 0$
for $r_0 \rightarrow 0$.

Let $p_0<...<p_k$ be critical values 
of $f|_{\Sigma}$.  First we
define a singular foliation $\Sigma_t$, $t \notin \cup_i (p_i - \varepsilon_0, p_i+\varepsilon_0)$.
Let $\bar{\Si}_t = \partial (f^{-1}([a,t]) \setminus U_{(1-t) r_0 } )$.
If $t$ is a singular value of $f$ then $\bar{\Si}_t$ has a Morse type singularity
at the singular point $s$ of $f$ in the interior of $N$.
Since $t$ is at least $\varepsilon_0$ away
from singular values of $f|_{\Sigma}$ we have that
$f^{-1}(t)$ intersects $\phi(\Sigma, (1-t)r_0 )$ transversally.
Hence, $\bar{\Si}_t \setminus s$ is a manifold with corners.
There exists a smoothing of the corners, so that 
the new foliation $\{ \Sigma_t \}$ coincides with 
$\{ \bar{\Si}_t \}$ outside of a small neighbourhood 
of $V_t=f^{-1}(t) \cap \phi (\Sigma, (1-t) r_0)$ 
and is smooth in $V_t$.
As discussed in subsection
\ref*{smoothing} we can choose it 
so that $\Hn(\Sigma_t) - \Hn(\bar{\Sigma}_t)$
is arbitrarily small.

Now we construct the foliation for $t \in (p_i - \varepsilon_0, p_i+\varepsilon_0)$.
Let $x_i \in \Sigma$ be the critical point of
$f|_{\Sigma}$ with $f(x_i) = p_i$.
Outside of a small neighbourhood of $x_i$
we can define $\Sigma_t$ in the same way as above,
since $f^{-1}(t)$ intersects $\phi(\Sigma, (1-t)r_0)$ transversally
and a smoothing of the corners is well-defined.

In the neighbourhood of 
a critical point $x_i$ we define the foliation
by considering two cases (see Figure \ref*{fig:morse_cases}). 
Since $f$ is $\partial$-transverse we have that $\nabla f(x_i) \neq 0$.
Let $n_i$ denote the inward pointing 
unit normal at $x_i$ and set
$s_i = \frac{\langle f(x_i), n_i \rangle }{|\langle f(x_i), n_i \rangle |}$.
The two cases will depend on the sign of $s_i$.

Let $y_i = \phi (x_i, (1-p_i) r_0)$.
There exists a choice of coordinates $u=(u_1,...,u_{n+1})$
in the neighbourhood of $y_i$ so that
in these coordinates we have
$f(u) = u_{n+1} + f(y_i)$.
Let $\lambda$ denote the index of $x_i$.
Let $P_{\lambda}(u_1, ..., u_n) = -u_1^2 -...- u_{\lambda}^2
+ u_{\lambda+1}^2 + ... + u_n^2$.
Up to a bilipschitz diffeomorphism 
of the neighbourhood of 
$y_i$, the foliation 
$\{ \phi(\Sigma, (1- t')r_0) \}$,
$t' \in (p_i- \varepsilon_0, p_i + \varepsilon_0)$, will coincide
with the foliation $\{  u_{n+1} = P_{\lambda}(u_1,...,u_n) - s_i t \}$, $t \in (- \varepsilon_0, \varepsilon_0)$. 

\begin{figure} 
	\centering
	\includegraphics[scale=0.5]{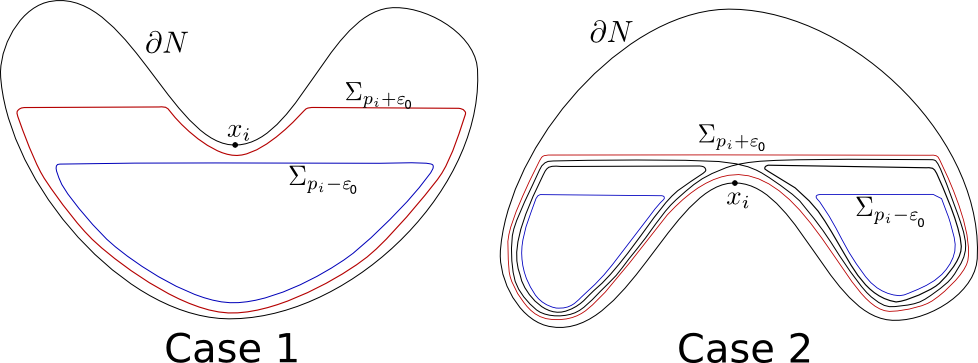}
	\caption{Procedure for dealing with singularities on the boundary of $N$.}
	\label{fig:morse_cases}
\end{figure}

Case 1: $s_i  =-1$.
There exists a smoothing of the corners for $\Sigma_t$
so that 
as $t$ approaches $p_i$ from 
above and below
surface
$\Sigma_t$ is a graph over $\{ u_{n+1} =0 \}$ hyperplane
in the neighbourhood of $y_i$. 
There exists a small $\delta >0$ and a foliation $\{ \G_t \}$
of the neighbourhood of $y_i$ so that
$\G_t = \{u_{n+1} = P_{\lambda}(u_1,...,u_n) + t \}$ 
for $u_1^2+...+u_n^2< \delta/3$ and
$\G_t$ is a graph of $u_{n+1} = t$ 
for $u_1^2+...+u_n^2> 2 \delta/3$.
The foliation $\{ \G_t \}$ extends the foliation
$\{ \Sigma_t \}$ to the neighbourhood of 
the critical point $x_i$.

Case 2: $s_i  =1$.
Let $\Pi_t= \{u_{n+1} = t \} \cap
\{P_{\lambda}(u_1,...,u_n) \leq 2t \}$
and $Q_t = \{u_{n+1} = P_{\lambda}(u_1,...,u_n) - t \} \cap
\{u_{n+1} \leq t \}$.
After a bilipschitz diffeomorphism in the neighbourhood
of $y_i$ we may assume that the foliation $\{ \Sigma_{t'} \}$
is given by the smoothing of the union
$\Pi_t \cup Q_t$.
By standard Morse theory arguments (see Section 3 of \cite{Mi1}
and Section 3 of \cite{Mi2}) $\Pi_{\delta} \cup Q_{\delta}$
is obtained from $\Pi_{-\delta} \cup Q_{-\delta}$
by surgery of type $(\lambda, n+1-\lambda)$ and
there exists an elementary cobordism between them
of index $\lambda$.
This cobordism gives the desired foliation in the neighbourhood 
of the critical point. 

Observe that in the above operations we applied
bilipschitz diffeomorphisms on some small neighbourhood,
possibly increasing the areas of hypersurfaces
by some controlled constant factor 
(independent of
the size of the neighbourhood). By choosing the neighbourhood 
to be sufficiently small we ensure that the areas 
do not increase by more than $\varepsilon$.


\end{proof}

We will also need a slightly different
version of this lemma for a non-compact 
submanifold $N$.

\begin{lemma} \label{morse_noncompact}
Let $N \subset M$ be a not necessarily compact submanifold 
with non-empty boundary and $f: N \rightarrow (-\infty,b]$
be a proper Morse function, which is $\partial$-transverse. Let $\Sigma$ be
a compact submanifold of $\partial N$.

For every $\varepsilon >0$ there exists a Morse function $g: N \rightarrow (-\infty,b]$,
such that the following holds:

(1) 
$g^{-1}(b) = \Sigma$;

(2) $f^{-1}((-\infty,t)) \subset N_{\varepsilon/2} (g^{-1}((-\infty,t))) \subset N_{\varepsilon}(f^{-1}((-\infty,t)))$;

(3) $ \Hn(g^{-1}(t)) \leq \Hn(\partial f^{-1}((-\infty,t])) + \varepsilon$;

(4) If $dist(x, f(\Sigma))> \varepsilon$ then $f^{-1} (x) = g^{-1}(x)$.
\end{lemma}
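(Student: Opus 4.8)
The plan is to reduce the statement to the compact case, Lemma~\ref{morse}, which is already established. Two facts make this possible: $f(\Si)$ is compact, being the continuous image of a compact set, and, since $f$ is proper, every set of the form $f^{-1}([a,b])$ is compact. So I would first fix a real number $a$, chosen to be a regular value of both $f$ and $f|_{\partial N}$ and to satisfy
\[
a < \inf f(\Si) - \varepsilon ,
\]
and set $N' := f^{-1}([a,b])$. Then $N'$ is compact; it is a manifold with a corner along $f^{-1}(a) \cap \partial N$, and after smoothing this corner as in Section~\ref{smoothing} we may regard $N'$ as a compact submanifold of $M$ with boundary, on which $f$ restricts to a Morse function that is $\partial$-transverse in a neighbourhood of $\Si$. (If $N$ is compact one may simply take $N' = N$, and the statement is Lemma~\ref{morse} itself.)

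Next I would apply Lemma~\ref{morse} to $N'$, the function $f|_{N'} : N' \to [a,b]$, the submanifold $\Si \subset \partial N$, and the given $\varepsilon$, obtaining a Morse function $g' : N' \to [a,b]$ satisfying conclusions (1)--(4). Since $a$ lies at distance greater than $\varepsilon$ from $f(\Si)$, conclusion (4) for $g'$ gives $(g')^{-1}(x) = f^{-1}(x)$ for all $x$ near $a$, and hence $g' = f$ on an open neighbourhood of $f^{-1}(a)$ in $N'$. I would then define $g : N \to (-\infty, b]$ by gluing: $g = g'$ on $N'$ and $g = f$ on $N \setminus N' = f^{-1}((-\infty, a))$, which is consistent because the two functions already agree near the gluing locus $f^{-1}(a)$. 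The resulting $g$ is smooth; its critical points are those of $g'$ in the interior of $N'$ together with those of $f$ below the level $a$, all non-degenerate, so $g$ is Morse; and $g$ coincides with the proper map $f$ outside the compact set $N'$, hence is itself proper.

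It remains to verify (1)--(4) for $g$, which is routine bookkeeping. Conclusion (1) is immediate, since $g^{-1}(b) = (g')^{-1}(b) = \Si$. For (2)--(4) one splits at the level $a$: for $t \le a$ the level and sublevel sets of $g$ and of $f$ coincide, so there is nothing to check; for $t > a$ one has $f^{-1}((-\infty,t)) = f^{-1}((-\infty,a)) \sqcup (f|_{N'})^{-1}([a,t))$ and $g^{-1}((-\infty,t)) = f^{-1}((-\infty,a)) \sqcup (g')^{-1}([a,t))$, and the inclusions in (2), the area bound in (3) and the identity in (4) follow by combining the corresponding conclusions of Lemma~\ref{morse} for $g'$ on $N'$ with the equality $g = f$ off $N'$.

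The one point that is not entirely formal, and the only thing I would write out carefully, is that the new boundary component $f^{-1}(a)$ of $N'$ is a level set on which $f$ is constant and hence not Morse, so $f|_{N'}$ is not $\partial$-transverse in the strict sense. The resolution is that the construction in the proof of Lemma~\ref{morse} is entirely local around $\Si$: it modifies $f$ only inside a tubular neighbourhood $U = N_{2 r_0}(\Si) \cap N'$ whose radius $r_0$ can be taken as small as we wish, so that $f(cl(U))$ lies in an arbitrarily small neighbourhood of $f(\Si)$ --- in particular disjoint from $a$, since $a < \inf f(\Si) - \varepsilon$ --- and it uses $\partial$-transversality only along $\partial N \cap U$, never along $f^{-1}(a)$. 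Equivalently, one can re-run the proof of Lemma~\ref{morse} on $N$ itself, observing that compactness of $N$ entered there only to guarantee that the tubular neighbourhood of $\Si$ is relatively compact and that only finitely many critical values of $f$ and of $f|_\Si$ are relevant, both of which continue to hold because $\Si$ is compact and $f$ is proper. No geometric ingredient beyond Lemma~\ref{morse} is needed.
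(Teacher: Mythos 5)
Your proposal is correct and is essentially the paper's own argument: using properness to cut at a level $a$ well below $f(\Sigma)$, applying Lemma \ref{morse} to the compact piece $N'=f^{-1}([a,b])$, and extending by $g=f$ outside $N'$. Your extra care about $\partial$-transversality along the new boundary level $f^{-1}(a)$ (noting the construction of Lemma \ref{morse} only modifies $f$ near $\Sigma$) addresses a point the paper passes over silently, but it does not change the route.
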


\begin{proof}
Let $a$ be such that $f(N_{\varepsilon} (\Sigma)) \subset [a+\varepsilon,b]$.
Since function $f$ is proper we have that $N' = f^{-1} ([a,b])$
is compact. We apply Lemma \ref*{morse}
to $N'$ to obtain function $g$. 
We set $g(x)=f(x)$ for $x$ not in $N'$ and the lemma follows.
\end{proof}

\subsection{Gluing Morse foliations 
on a manifold separated by a hypersurface
transverse to the boundary.}

We will also need the following lemma for
gluing two Morse foliations
on a manifold with boundary
separated by a hypersurface
which is transversal to the boundary.

\begin{lemma} \label{gluing_separated}
Let $N^{n+1} \subset M^{n+1}$ be a manifold with compact
boundary $\partial N$ and $\Sigma$ be
a hypersurface with $\partial \Sigma 
\subset \partial N$ and such that
$\Sigma$ intersects $\partial N$
transversally and separates $N$ into two disjoint
regions $N \setminus \Sigma = V_1 \sqcup V_2$.
For every $\varepsilon >0$ there exist 
open sets with smooth boundary
$\Om_1$ and $\Om_2$ and a Morse
function $g: cl(N \setminus (\Om_1 \cup \Om_2))
\rightarrow [0,1]$, such that the following
holds:

(1) $\Om_1$ is an inward $\varepsilon$-perturbation of $V_1$;
$\Om_2$ is an inward $\varepsilon$-perturbation of $V_2$.


(2) $g^{-1}(0) = \partial \Om_1 \cup \partial \Om_2$
and $g^{-1}(1) = \partial N$;

(3) $\Hn(g^{-1}(t)) \leq \Hn(\partial N) + 
2 \Hn(\Sigma) + \varepsilon$.

\end{lemma}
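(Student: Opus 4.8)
The plan is to build the Morse function $f$ by combining three foliations: one that sweeps $V_1$ outward toward $\Sigma$, one that sweeps $V_2$ outward toward $\Sigma$, and then glue these across $\Sigma$ so that the common level set at the gluing locus is (a smoothing of) $\Sigma$ itself, doubled; finally we sweep the rest toward $\partial N$. First I would choose an arbitrary $\partial$-transverse Morse function $h_i : cl(V_i) \to [a_i, b_i]$ for $i = 1, 2$ such that $h_i^{-1}(a_i)$ is a hypersurface deep inside $V_i$ (an inward perturbation of $V_i$ giving $\Om_i$) and $h_i^{-1}(b_i) = cl(\Sigma)$. This is exactly the output of Lemma \ref{morse} applied to the compact manifold $cl(V_i)$ with the distinguished boundary piece $\Si := cl(\Sigma) \subset \partial(cl V_i)$: it produces $g_i$ with $g_i^{-1}(b_i) = \Si$, with level sets that agree with the original foliation away from $\Si$, and — crucially — with the area bound $\Hn(g_i^{-1}(t)) \le \Hn(\partial f^{-1}([a_i,t])) + \varepsilon/10 \le \Hn(\partial N) + \varepsilon/10$ once we start $h_i$ from a nearly-area-minimizing hypersurface inside $V_i$ (so that all intermediate slices have area at most $\Hn(\partial V_i) + \varepsilon \le \Hn(\partial N) + \Hn(\Sigma) + \varepsilon$; in fact $\partial V_i \subset \partial N \cup \Sigma$, so $\Hn(\partial V_i) \le \Hn(\partial N) + \Hn(\Sigma)$, which already gives the bound in (3) up to the constant).

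The main obstacle is the gluing across $\Sigma$: the level sets of $g_1$ and of $g_2$ near $\Sigma$ meet $\partial N$ transversally and approach $\Sigma$ from opposite sides, so their union near the gluing time is two sheets running nearly parallel to $\Sigma$, joined to each other and to $\partial N$ along corners. I would reparametrize $g_1$ and $g_2$ so that both reach $\Sigma$ at the same parameter value $t_0$, and in a collar $\Sigma \times (-\tau, \tau) \times [0,1)$ (second factor the signed distance to $\Sigma$, third the distance into $N$ from $\partial N$) replace the two-sheeted degenerate family by the standard smoothing from Section \ref{smoothing}: for $t$ slightly less than $t_0$ take the union of the graph $\{\,\text{dist to }\Sigma = +c(t)\,\}$ and $\{\,\text{dist to }\Sigma = -c(t)\,\}$ capped off near $\partial N$, and as $t$ passes $t_0$ open this up, via the corner-smoothing construction applied parametrically (observation 3 of Section \ref{smoothing}), into a single sheet that then recedes toward $\partial N$. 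By observation 2 of Section \ref{smoothing} each such smoothed slice has area at most $2\Hn(\Sigma) + \varepsilon/2$ in the collar region, and away from the collar it is just one of the $g_i$-slices; combining gives (3). The region between $\Sigma$ and $\partial N$ is then foliated by pushing $\Sigma$ (smoothed) out to $\partial N$, which by transversality of $\Sigma$ and $\partial N$ and observation 4 of Section \ref{smoothing} can be done with area controlled by $2\Hn(\Sigma) + \Hn(\partial N) + \varepsilon$, and whose last leaf is $\partial N$.

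It remains to check that the concatenation is globally a Morse function: the only critical points are the finitely many interior ones of $g_1$ and $g_2$ (which, being $\partial$-transverse and interior, remain non-degenerate critical points of $f$), the finitely many boundary critical points handled exactly as in Cases 1 and 2 of Lemma \ref{morse}, and the birth/death-type event along $\Sigma$ at time $t_0$, which the corner smoothing is designed to realize through non-degenerate singular leaves only. Property (1) holds by construction since $\Om_i$ is the sublevel set of $g_i$ at $a_i$, chosen as an inward $\varepsilon$-perturbation of $V_i$; property (2) holds because we arranged $f^{-1}(0) = \partial\Om_1 \cup \partial\Om_2$ and $f^{-1}(1) = \partial N$; and property (3) is the area bound assembled above, where the three contributions (the two $g_i$-parts, each $\le \Hn(\partial N) + \varepsilon/3$, and the collar/push-out part, $\le \Hn(\partial N) + 2\Hn(\Sigma) + \varepsilon/3$) never add simultaneously — at each time $t$ the slice $f^{-1}(t)$ is supported either in $cl(V_1)$, in $cl(V_2)$, in the collar of $\Sigma$, or in the region between $\Sigma$ and $\partial N$ — so the worst case is the $2\Hn(\Sigma) + \Hn(\partial N) + \varepsilon$ bound claimed.
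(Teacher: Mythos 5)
Your route differs from the paper's (which never invokes Lemma \ref{morse} here), and as written it has two genuine gaps. The central one is the area bound in your first stage. You apply Lemma \ref{morse} to $cl(V_i)$ with an arbitrary $\partial$-transverse Morse function $h_i$ and then assert $\Hn(g_i^{-1}(t)) \le \Hn(\partial h_i^{-1}([a_i,t])) + \varepsilon/10 \le \Hn(\partial N) + \varepsilon/10$. The second inequality is unjustified: $\partial h_i^{-1}([a_i,t])$ contains the slice $h_i^{-1}(t)$ of an arbitrary sweepout of $V_i$, and nothing bounds its area by $\Hn(\partial V_i)$; a Morse foliation that sweeps $V_i$ from a hypersurface deep inside out to its boundary must pass through slices of area at least the width of $V_i$, which bears no relation to $\Hn(\partial N)+2\Hn(\Si)$. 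Choosing the starting slice to be nearly area-minimizing controls that one slice only. There is also an internal tension: for conclusion (1) the set $\Om_i$ must be an inward $\varepsilon$-perturbation of $V_i$, so $\partial \Om_i$ lies within $\varepsilon$ of $\partial V_i$ (not ``deep inside''), and the only region left to foliate is the thin collar $cl(V_i)\setminus \Om_i$, whose sweepout cannot end on $cl(\Si)$ alone --- its slices must end up covering all of $\partial V_i$, with the two $\Si$-sides doubled. This is precisely why the paper proceeds differently: it sets $\Om_i$ to be a slight shrinking of a corner-smoothing of $V_i$, takes the distance function to $\Om_1\cup\Om_2$, smooths it via Greene--Wu to a Morse function with $|\nabla f|$ close to $1$, and bounds the areas of its level sets over the short range $t\in[0,2\delta]$ by Heintze--Karcher comparison (the rate of change of area is controlled by Ricci curvature and the mean curvature of $\partial(\Om_1\cup\Om_2)$), giving $\Hn(f^{-1}(t)) \le \Hn(\partial V_1)+\Hn(\partial V_2)+\varepsilon \le \Hn(\partial N)+2\Hn(\Si)+\varepsilon$; running this inside $N\setminus N_{3\delta}(\partial N)$ and then isotoping the outer level onto $\partial N$ finishes the construction.

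The second gap is your last stage. ``Foliating the region between $\Si$ and $\partial N$ by pushing $\Si$ (smoothed) out to $\partial N$'' is not a well-defined operation: $\partial\Si\subset\partial N$ and $\Si$ is transverse to $\partial N$, so there is no region between them, and $\Si$ and $\partial N$ are in general not even isotopic (take $N$ a solid torus and $\Si$ a meridian disk). What must actually happen after your gluing time is that the two nearly parallel $\Si$-sheets cancel against each other across the thin slab they bound, while the sheet running along $\partial N$ is pushed outward onto $\partial N$; the delicate locus is the corner region around $\partial\Si$ where the two $\Si$-sheets meet the $\partial N$-sheet, and observations 2--4 of Section \ref{smoothing} are area estimates for a fixed smoothing, not a parametric family realizing this cancellation with only Morse-type singular leaves and with the stated area bound. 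In the paper this issue never arises because all of these sheets are simultaneously level sets of one smoothed distance function, so the merging near the corner and the Morse property come for free. To salvage your approach you would have to construct this cancellation family explicitly near $\partial\Si$ and verify both the Morse property and the area control there, or simply adopt the distance-function argument.
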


\begin{figure} 
	\centering
	\includegraphics[scale=0.5]{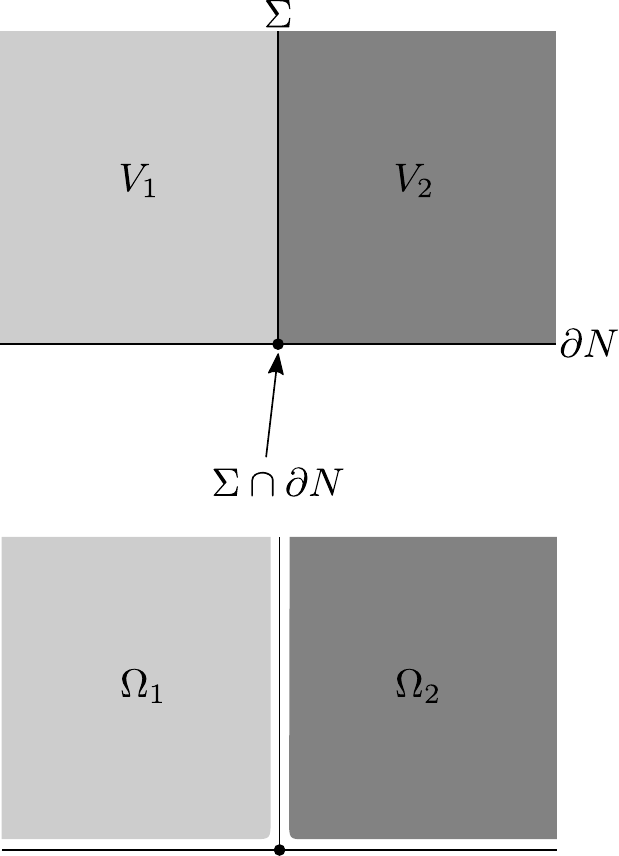}
	\caption{Gluing two submanifolds using a Morse foliation.}
	\label{fig:gluing_submanifold}
\end{figure}

\begin{proof}
The situation is illustrated in Figure \ref*{fig:gluing_submanifold}.

Let $f: M \rightarrow \R$ be a Morse function, which 
coincides with the signed distance function $sdist(x, \partial N)$ for $x \in N_{\delta_1}(\partial N)$
for some sufficiently small $0<\delta_1< \varepsilon$. 
The sign is chosen so that $f(x) \geq 0$ when $x \in N$.
Without any loss of generality we may assume that $f$ has no critical points in $\varepsilon$-neighborhood of $\Sigma$,
the restriction of $f$ to $\Sigma$ is also Morse and
$\Hn(f^{-1}(t)) \leq \Hn(\partial N)+ \varepsilon/10$ for $t \in [0, \delta_1]$.
Pick $\delta_2 \in (0, \delta_1)$, so that $\Hn(f^{-1}(x) \cap N_{\delta_2}(\Sigma)) < \varepsilon/10 $
and $\Hn(\partial N_{\delta_2}(\Sigma)) \leq 2 \Hn(\Sigma) + \varepsilon/10$.

Let $\Om_i$ denote an inward $\frac{\delta_2}{2}$-perturbation of $V_i$.
We can make an arbitrarily small perturbation to $f$ by an ambient diffeomorphism,
so that $f$ is a Morse function on $\partial \Om_1 \cup \partial \Om_2$
and for $t \in [0, \delta_2]$ each leaf of the foliation $f^{-1}(t) \cap N \setminus (\Om_1 \cup \Om_2)$
is a graph over some region $U_t \subset N$ with $\Hn(f^{-1}(t) \cap N \setminus (\Om_1 \cup \Om_2))\leq \Hn(U_t) + \varepsilon/5$.

It follows that $\Hn(\partial f^{-1}((-\infty, t)) \leq 2 \Hn(\Sigma) + \Hn(\partial N) + \varepsilon/2$.
We can now apply Lemma \ref{morse_noncompact} to $f$ restricted to $M \setminus (\Om_1 \cup \Om_2)$
to obtain desired function $g$ and the corresponding foliation.
\end{proof}

\section{Splitting and extension lemmas}

In this section we prove two important lemmas
for nested sweepouts which we will use in sections
``Nested sweepouts" and ``No escape to infinity".

    \begin{lemma}
	\label{lem:split}
	Suppose that $f: M \rightarrow [-1,\infty)$ is a Morse
	function and $\{ \Gamma_t \} = \{ f^{-1}(t) \}_{t \in [0,1]}$ is a 
	nested family of hypersurfaces of area $\leq A$ 
	with associated open sets $\{ \Om_t \} = 
	 \{ f^{-1}((-\infty, t)) \}$.
	
	I. Additionally, suppose that $\Om$ is a bounded open set 
	with boundary $\Gamma$ a smooth embedded manifold such that
	\begin{enumerate}
		\item 
		      $\Om \subset \Om_1$;
        \item There is an $\varepsilon > 0$ such that
                for every $\Om'$ with $\Om \subset \Om' \subset \Om_1$ we have
                $\Hn( \Gamma)  <  \Hn(\partial \Om') + \varepsilon/4$.
	\end{enumerate}

	Then we can find a nested family $\tilde{\G}_t$ and an associated
	family of open sets $\tilde{\Om}_t$ such that $\tilde{\Om}_0 \subset \Om_0$, $\tilde{\G}_1 = \Gamma$,
	and every hypersurface has area at most $A + \varepsilon$.
	Furthermore, if $\Om_0 \subset \Om$, then $\tilde{\G}_0 = \G_0$.
	
	II. Suppose that, instead of properties (1) and (2) above, the following are true:
	
		(1)' 
		      $\Om_0 \subset \Om$;
		      
		(2)'    There is an $\varepsilon > 0$ such that
                for every $\Om'$ with $\Om_0 \subset \Om' \subset \Om$ we have
                $\Hn( \Gamma)  <  \Hn(\partial \Om') + \varepsilon/4$.
		

	Then we can find a nested family $\tilde{\G}_t$ and an associated
	family of open sets $\tilde{\Om}_t$ such that $\Om_1 \subset \tilde{\Om}_1$, $\tilde{\G}_0 = \Gamma$,
	and every hypersurface has area at most $A + \varepsilon$.
	Furthermore, if $\Om \subset \Om_1$, then $\tilde{\G}_1 = \G_1$.

\end{lemma}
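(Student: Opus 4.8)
The plan is to prove Part I, since Part II follows by reversing the direction of the sweepout (replace $f$ by $-f$ and swap the roles of $\Om_0,\Om_1$, turning ``$\Om\subset\Om_1$'' hypotheses into ``$\Om_0\subset\Om$'' hypotheses). For Part I the idea is to interpolate between the given nested family $\{\Om_t\}$ and the fixed set $\Om$ by taking \emph{unions}: the natural candidate is $\tilde\Om_t = \Om \cup \Om_{s(t)}$ for a suitable reparametrization, run so that at the end we have $\tilde\Om_1 = \Om$ (more precisely $\tilde\G_1=\Gamma=\partial\Om$) and at the start $\tilde\Om_0\subset\Om_0$. Concretely, I would first handle the case $\Om_0\subset\Om$: run $\tilde\Om_t := \Om\cup\Om_t$ for $t$ going from $0$ to $1$ — wait, this ends on $\Om\cup\Om_1 = \Om_1\supset\Om$, the wrong end. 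Instead, reverse: let the new family be $\tilde\Om_t := \Om\cup\Om_{1-t}$, so $\tilde\Om_0 = \Om\cup\Om_1=\Om_1$ and $\tilde\Om_1 = \Om\cup\Om_0=\Om$ (using $\Om_0\subset\Om$). That gives $\tilde\G_1=\Gamma$ but $\tilde\Om_0=\Om_1$, not $\subset\Om_0$; however the statement only asks $\tilde\Om_0\subset\Om_0$ when we are \emph{not} in the special case, and in the special case it asks $\tilde\G_0=\G_0$ — so in the special case I should instead glue this union-family to the original family run backwards from $\Om_0$ to $\Om_1$, i.e. concatenate $\{\Om_t\}_{t\in[0,1]}$ with $\{\Om\cup\Om_{1-t}\}_{t\in[0,1]}$, getting a family from $\G_0$ to $\Gamma$. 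In the general case (only $\Om\subset\Om_1$, no containment of $\Om_0$ in $\Om$), take $\tilde\Om_t := \Om\cup\Om_{1-t}\cap(\text{something})$ — cleaner: use $\tilde\Om_t=\Om\cup\Om_{1-t}$ directly; then $\tilde\Om_0=\Om\cup\Om_1=\Om_1$. To fix the starting set I first run a short preliminary sweepout shrinking $\Om_1$ down to $\Om_0\cap$ — actually $\tilde\Om_0\subset\Om_0$ can be arranged by prepending the reversed original family restricted appropriately; I'd take $\tilde\Om_0 = \Om_0$ itself, noting $\Om_0\subset\Om_0$ trivially, so the honest construction is: concatenate the reversed original family $t\mapsto\Om_{1-t}$ (from $\Om_1$ to $\Om_0$) — no, that has wrong endpoints too. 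The correct single family is $\tilde\Om_t := \Om_{a(t)}\cup\Om'$ where we let $\Om'$ grow from $\emptyset$-like behavior... Let me commit: define $\tilde\Om_t = \Om\cup\Om_{1-t}$ for $t\in[0,1]$; this is nested increasing in... it is nested \emph{decreasing}, so reparametrize to make it increasing, yielding $\tilde\Om_0=\Om$, $\tilde\Om_1=\Om_1$, $\tilde\G_0=\Gamma$ — that proves Part II's shape. So by the $f\mapsto -f$ symmetry the family for Part I is $\tilde\Om_t = \Om\cap\Om_{t}$ run appropriately; I will present Part I via its union-complement dual and not belabor the bookkeeping.

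The key analytic input is the \emph{area bound}: I must show each $\Hn(\partial\tilde\Om_t)\le A+\varepsilon$. For $\tilde\Om_t$ of the form $\Om\cup\Om_s$, write $\partial(\Om\cup\Om_s)\subset(\partial\Om\setminus\Om_s)\cup(\partial\Om_s\setminus\Om) = (\Gamma\cap\Om_s^c)\cup(\Gamma_s\cap\Om^c)$ up to the negligible set $P_s$ of Morse singularities. The crucial observation is that $\Om\cup\Om_s$ is itself an admissible competitor for hypothesis (2): $\Om\subset\Om\cup\Om_s\subset\Om_1$ (using $\Om_s\subset\Om_1$), so $\Hn(\Gamma)<\Hn(\partial(\Om\cup\Om_s))+\varepsilon/4$. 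Combined with the additivity-type inequality $\Hn(\Gamma\cap\Om_s^c)+\Hn(\Gamma_s\cap\Om^c)\le \Hn(\partial(\Om\cup\Om_s)) + \Hn(\Gamma\cap\Gamma_s)$ — better, use the cut-and-paste identity: for almost every $s$ (those for which $\Gamma$ and $\Gamma_s$ meet transversally), $\Hn(\partial(\Om\cup\Om_s)) + \Hn(\partial(\Om\cap\Om_s)) = \Hn(\Gamma)+\Hn(\Gamma_s)$. Since $\Om\cap\Om_s$ satisfies $\Om\cap\Om_s\subset\Om\subset\Om_1$ but we need a lower bound on \emph{its} boundary; instead I'll use hypothesis (2) applied to $\Om' = \Om\cup\Om_s$ directly to get $\Hn(\partial(\Om\cup\Om_s))>\Hn(\Gamma)-\varepsilon/4$, which is a lower bound, not what I want. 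The right move: hypothesis (2) says $\Gamma$ is \emph{nearly minimizing} among sets between $\Om$ and $\Om_1$, so $\Hn(\partial(\Om\cap\Om_s))$... no, $\Om\cap\Om_s$ is not between $\Om$ and $\Om_1$. Use instead: apply (2) to $\Om'=\Om\cup\Om_s$ to learn $\Hn(\partial(\Om\cup\Om_s))\ge\Hn(\Gamma)-\varepsilon/4$; then from the cut-paste identity $\Hn(\partial(\Om\cap\Om_s))\le\Hn(\Gamma)+\Hn(\Gamma_s)-\Hn(\partial(\Om\cup\Om_s))\le \Hn(\Gamma_s)+\varepsilon/4\le A+\varepsilon/4$. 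So it is the \emph{intersection} family that has controlled area! Hence the honest Part I construction is $\tilde\Om_t := \Om\cap\Om_{\sigma(t)}$ for a reparametrization with $\sigma(0)$ near $0$ (so $\tilde\Om_0 = \Om\cap\Om_0\subset\Om_0$) and $\sigma(1)=1$ (so $\tilde\Om_1=\Om\cap\Om_1=\Om$, i.e. $\tilde\G_1=\Gamma$), using $\Om\subset\Om_1$; and if $\Om_0\subset\Om$ then $\Om\cap\Om_0=\Om_0$ so $\tilde\G_0=\G_0$. I would carry out: (a) perturb $\Om$ slightly (an outward $\delta$-perturbation, Definition \ref{delta-perturbation}) so that $\partial\Om$ meets each $\Gamma_s$ transversally for all but finitely many $s$, at cost $\varepsilon/4$ in area and staying inside $\Om_1$; (b) define $\tilde\Om_t=\Om\cap\Om_{t}$ (after reparametrizing $[0,1]$ onto the relevant $s$-range); (c) verify nestedness — immediate since $\Om_s\subset\Om_{s'}$ implies $\Om\cap\Om_s\subset\Om\cap\Om_{s'}$; (d) establish $\partial\tilde\Om_t=f^{-1}$ of a Morse function by the smoothing-corners construction of Section \ref{smoothing} applied parametrically (observation 3 there), smoothing the corner locus $\Gamma\cap\Gamma_t$; (e) the area bound from the cut-and-paste identity plus hypothesis (2) as above, with the $\varepsilon/4$'s from perturbation and from smoothing summing to $<\varepsilon$; (f) check the endpoints.

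The main obstacle I anticipate is \textbf{(d)}: ensuring the intersection family $\{\Om\cap\Om_t\}$ is genuinely a nested family of hypersurfaces in the sense of (sw$_n$) — i.e., the level sets of a single Morse function — rather than just a nested family of sets with piecewise-smooth boundaries. The corner locus $\Gamma\cap\Gamma_t$ moves with $t$, and at the finitely many $t$ where transversality fails (tangencies between $\Gamma$ and a level set $\Gamma_t$, or coincidences with the critical points of $f$) the parametric smoothing must be done carefully so that the resulting foliation has only finitely many singular leaves, each with nondegenerate (Morse) singularities; this is exactly the kind of local analysis carried out in Lemma \ref{morse}, and I would invoke that machinery (or its proof technique) rather than redo it. A secondary, more routine obstacle is bounding the total area increase: one must check that the outward perturbation of $\Om$ can simultaneously be made to (i) lie inside $\Om_1$, (ii) be transverse to all but finitely many $\Gamma_t$, and (iii) cost less than $\varepsilon/4$ in area, which follows from Definition \ref{delta-perturbation} and Sard's theorem but needs to be stated. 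Everything else — nestedness, the endpoint identities, and the cut-and-paste area inequality — is straightforward once the framework of Section \ref{smoothing} is in place.
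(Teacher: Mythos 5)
Your final committed construction coincides with the paper's: the authors likewise take the (smoothed) intersection family $\partial(\Om\cap\Om_t)$, realized as level sets of a Morse function via Lemma \ref{morse} applied to $f|_{\Om}$ with $\Sigma=\G$, and obtain the area bound by applying hypothesis (2) to the competitor $\Om\cup\Om_t$ and exchanging boundary pieces, which is exactly your cut-and-paste inequality; Part II is handled, as you suggest, by the $f\mapsto -f$ symmetry (via Lemma \ref{morse_noncompact}). The only point you omit is the degenerate case $\Om\subset\Om_0$, where the intersection family is constant and the paper instead runs a short collar foliation of a tubular neighbourhood of $\G$ inside $\Om$ — a routine fix.
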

\begin{proof}
    We begin with a proof of the first half of this lemma.
    
    The desired family will be obtained 
    by regularization of the collection of hypersurfaces
    $\{ \partial (\Om_0 \cap \Om) \}$.
    
    We consider two cases. 
    Suppose first that $\Om \subset \Om_0$.
    For a sufficiently small $\delta>0$
    the function $g: cl(N_\delta(\G) \cap \Om) \rightarrow [0,1]$
    given by $g(x)= \frac{1}{\delta} dist(x, \G)$ is a smooth function
    with no critical points and $\tilde{\G}_t = g^{-1}(t)$ a hypersurface
    of area at most $\Hn(\G) + \varepsilon/2$. By condition (2)
    $\Hn(\G) \leq \Hn(\partial \Om_0) + \varepsilon/4$ and so
    $\Hn(\tilde{\G}_t) \leq A + \varepsilon$.
    We extend $g$ to a Morse function on $M$ in an arbitrary way.
    $\{ \tilde{\G}_t \}$ is a nested family satisfying the conclusions of the
    theorem.
    
    Suppose now that $\Om  \setminus \Om_0  \neq \emptyset$.
    Make a small perturbation to the hypersurface $\G = \partial \Omega$, so that
    $f|_{\partial \Omega}$ is Morse and (1) and (2) are still satisfied,
    possibly replacing $\varepsilon/4$ in (2) by $ \varepsilon/2$.
    
    Consider $f$ restricted to $\Om$.
    We apply Lemma \ref*{morse}
    with $N= \Om $ and $\Sigma = \Gamma$ to obtain
    a Morse function $g: \Om \rightarrow [-1,1]$, such that
    $g^{-1}(-1)$ is a point in $\Om$,
    $g^{-1}(1) = \G$ and  $\Hn(g^{-1}(t)) \leq \Hn(\partial (f^{-1}([-1,t])
    \cap \Om) + \varepsilon/2$.
    It follows that 
    $\Hn(g^{-1}(t)) \leq \Hn(f^{-1}(t) \cap \Om) + \Hn(f^{-1}([-1,t])\cap \G) + \varepsilon/2$.
    Furthermore, we have $g^{-1}([-1,0)) \subset N_{\varepsilon} (\Om \cap \Om_0)$.
    After a small perturbation of the function $g$ we may assume that
    $g^{-1}([-1,0)) \subset (\Om \cap \Om_0)$.
    We extend $g$ to a Morse function on $M$ in an arbitrary way.
    We claim that $\title{\G}_t = g^{-1}(t)$ for $t\in [0,1]$
    is the desired nested family. The only thing left to prove
    is an upper bound for the areas of $\tilde{\G}_t$.
    
    For any smooth hypersurface $\Si_t$ obtained 
    by a small perturbation of $\partial (\Omega \cup \Omega_t)$
    we have $\Hn(\G) \leq \Hn(\Si_t) + \varepsilon/4$ by (2).
    It follows that 
    $$\Hn(\G) \leq \Hn(\partial (\Omega \cup \Omega_t)) + \varepsilon/2$$
    
    Since $\partial (\Omega \cup \Omega_t)
    = (\G_t \setminus \Om) \cup (\G \setminus \Om_t)$
    we have
    $$\Hn(\G \cap \Om_t) + \Hn(\G \setminus \Om_t) \leq \Hn(\G_t \setminus \Om )
    + \Hn(\G \setminus \Om_t) + \varepsilon/2$$
    $$\Hn(\G \cap \Om_t) \leq \Hn(\G_t \setminus \Om ) + \varepsilon/2$$
    
    By Lemma \ref*{morse} we have

\begin{align*}
\Hn(\tilde{\G}_t) & \leq \Hn( \G_t \cap \Omega) + \Hn( \G \cap \Om_t) + \varepsilon/2 \\
                & \leq \Hn( \G_t \cap \Omega) + \Hn(\G_t \setminus \Om ) + \varepsilon \\
                & \leq \Hn(\G_t) + \varepsilon \leq A + \varepsilon
\end{align*}

If $\Om_0 \subset \Om$, then by choosing 
sufficiently small $\varepsilon>0$ and applying
Lemma \ref*{morse} (4)
we have $\tilde{\G}_0 = f^{-1}(0) = \G_0$.

The proof of the second half is similar.
The desired family will be a regularization of
$\{\partial (\Omega_t \cup \Omega) \}$.

If $\Om_1 \subset \Om$ we define the desired nested family
$\{ \tilde{\G} \}$ in a small tubular neighbourhood of 
$\G$.

Otherwise, 
define $\tilde{f}(x) = -f(x)$.
We apply Lemma \ref*{morse_noncompact} 
to the restriction  $\tilde{f}: M \setminus \Om \rightarrow 
(-\infty, 0]$.
It follows that there exists a Morse function $\tilde{g}$,
such that
$\tilde{g}^{-1}(0) = \G$ and 
$\Hn(\tilde{g}^{-1}(-t)) \leq \Hn(\partial(f^{-1}([t,\infty))\setminus \Om))
+ \varepsilon/2$. We define $g(x) = - \tilde{g}(x)$ for
$x \in M \setminus \Om$ and extend it to a Morse function
from $M$ to $[-1, \infty)$ in an arbitrary way.
By property (2) of Lemma \ref*{morse_noncompact}
we have that (possibly after a small perturbation)
$\title{\Om}_1 = g^{-1}([-1,1)) \supset \Om_1$.

The bound on the area is similar to the 
argument in the proof of I.
It follows by (2)' that
$\Hn(\tilde{G}_t ) \leq \Hn(\G_t \setminus \Om) + \Hn(\G_t \cap \Om) + \varepsilon/2
< A+ \varepsilon$.
If $\Om \subset \Om_1$ then by property (4) of Lemma \ref*{morse_noncompact}
we may assume that $\tilde{\G}_1 = g^{-1}(1) = \G_1$.
\end{proof}

The second lemma in this section will deal with extending a Morse foliation.

The following result of Falconer
(\cite{Fa}, see also \cite[Appendix 6]{Gu1})
will be used in the proof.

\begin{theorem} \label{falconer}
(Falconer) There exists a constant $C(n)$ so that the following is true.
Let $U \subset \mathbb{R}^{n+1}$ be an open 
set with smooth boundary.
There exists a line $l \in \mathbb{R}^{n+1}$, so that
projection $p_l$ onto $l$ satisfies 
$Vol_{n}(U \cap p_l^{-1}(t)) < C(n) Vol_{n+1}(U)^{\frac{n}{n+1}}$
for all $t \in l$.
Moreover, we can assume that $p_l$ restricted to $\partial U$
is a Morse function.
\end{theorem}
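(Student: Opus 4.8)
The plan is to produce the good direction by an integral-geometric averaging argument over the sphere of slicing directions, and to arrange the Morse condition afterwards by a routine genericity argument. Write $V=Vol_{n+1}(U)$ (finite and positive). For a unit vector $v\in S^n$ and $t\in\R$ set $A_v(t)=\Hn(U\cap\{x:\langle x,v\rangle=t\})$; the line $l$ of the statement will be $\R v$ for a suitable $v$, for which $p_l^{-1}(t)$ is the affine hyperplane $\{\langle x,v\rangle=t\}$, so the assertion is that $\sup_{t}A_v(t)\le C(n)V^{n/(n+1)}$ for some $v$. Note $\int_\R A_v(t)\,dt=V$ for every $v$ by Fubini, so what has to be controlled is the concentration of the mass of $A_v$, not its total mass.

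First I would estimate the spherical average of $\int_\R A_v(t)^2\,dt$. For a fixed $v$, writing $A_v(t)^2$ as an integral over pairs of points lying in the common slice $\{\langle x,v\rangle=t\}$ and changing variables identifies $\int_\R A_v(t)^2\,dt$ with $\int_{v^\perp}g(u)\,d\Hn(u)$, where $g(u)=Vol_{n+1}(U\cap(U-u))$ is the autocorrelation of $U$; one has $0\le g\le V$, $g$ continuous, and $\int_{\R^{n+1}}g=V^2$. Averaging over $v\in S^n$ and applying the polar-coordinate identity $\int_{S^n}\big(\int_{v^\perp}h\,d\Hn\big)\,d\sigma(v)=c_n\int_{\R^{n+1}}|u|^{-1}h(u)\,du$ turns the average into $c_n\int_{\R^{n+1}}|u|^{-1}g(u)\,du$. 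Splitting this integral over $\{|u|<R\}$ and $\{|u|\ge R\}$, using $g\le V$ on the first region and $|u|^{-1}\le R^{-1}$ on the second, bounds it by $c_n'(VR^n+V^2/R)$; optimizing $R\sim V^{1/(n+1)}$ gives $\int_{S^n}\big(\int_\R A_v(t)^2\,dt\big)\,d\sigma(v)\le C_n V^{(2n+1)/(n+1)}$, hence $\int_\R A_v(t)^2\,dt\le C_n'V^{(2n+1)/(n+1)}$ on a set of directions of measure at least $|S^n|/2$. (Running the same computation with the $q$-th power for any $q\le n+1$ only sharpens the exponent and does not change the nature of the output.)

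The main obstacle is upgrading this integral ($L^2$, or $L^q$) control on the slice-area function to the pointwise bound $\sup_t A_v(t)\le C(n)V^{n/(n+1)}$: a nonnegative function can be small in every $L^q$ norm while still having a tall, narrow spike, so some additional input is unavoidable. The idea is that a direction producing such a spike is atypical in a quantitative way — a tall slice $\{\langle x,v_0\rangle=t_0\}\cap U$ cannot be destroyed by small rotations of the slicing hyperplane, so for $v$ in a small cap around $v_0$ and for suitable offsets $t$ the areas $A_v(t)$ remain comparable to the spike height over a set of pairs $(v,t)$ whose measure is too large to be compatible with the average bound of the previous step. Making this precise — in particular, handling configurations where $U$ is very elongated or spread out, which forces a localization/decomposition of $U$ before the cap argument — is exactly the quantitative slicing estimate of Falconer, and here I would simply invoke it: by \cite{Fa} (see also \cite[Appendix~6]{Gu1}) there is a set $G\subset S^n$ of positive measure on which $\sup_t A_v(t)\le C(n)V^{n/(n+1)}$.

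Finally, for the Morse clause I would recall that $x\mapsto\langle x,v\rangle$ restricted to the smooth hypersurface $\partial U$ is Morse precisely when $v$ is a regular value of the Gauss map of $\partial U$, which by Sard's theorem holds for $v$ in a set of full measure in $S^n$. Intersecting this set with $G$ leaves a set of positive measure, in particular nonempty; choosing any $v$ in it and putting $l=\R v$ gives a line with both required properties, which finishes the proof.
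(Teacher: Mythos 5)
Your proposal is acceptable for the same reason the paper's treatment is: the paper does not prove this statement at all, but imports it as Falconer's theorem with a citation to \cite{Fa} and \cite[Appendix 6]{Gu1}, and your argument likewise defers the essential quantitative step (upgrading integral control of the slice areas to the pointwise bound $\sup_t A_v(t)\leq C(n)V^{n/(n+1)}$ on a positive-measure set of directions) to that same reference, while correctly and honestly noting that your $L^2$ averaging computation alone cannot yield it. The one genuine addition, deducing the Morse clause by Sard's theorem applied to the Gauss map and intersecting the full-measure set of good directions with the positive-measure Falconer set, is a correct way to justify the "moreover" part that the paper leaves implicit.
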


\begin{lemma} \label{lem:extension}
Let $\varepsilon>0$, $L>0$.
Suppose $\Om_0 \subset \Om_1$ are bounded 
open sets with smooth boundary
and $\Om_1 \setminus \Om_0 \subset U$,
where $U$ is $(1+L)$-bilipschitz 
diffeomorphic to an open subset of $\mathbb{R}^{n+1}$.
There exists a constant $C(n)$ and a nested family $\{\G_t' \}$
with a family of corresponding open sets 
$\{\Om_t' \}$, such that 

(1) $\Hn(\G_t') \leq \Hn(\partial \Om_0)
+\Hn(\partial (\Om_1 \setminus \Om_0))+ C(n)(1+L)^n \Hnn(\Om_1 \setminus \Om_0)^{\frac{n}{n+1}}
+ \varepsilon$;

(2) $\Om_0'$ is an inward $\varepsilon$-perturbation
of $\Om_0$ and $\Om_1'=\Om_1$;

Alternatively, we can require that instead of (2)
the family satisfies 

(2') $\Om_1'$ is an outward $\varepsilon$-perturbation
of $\Om_1$ and $\Om_0'=\Om_0$;
\end{lemma}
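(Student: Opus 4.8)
The plan is to build the nested family inside $U$ using Falconer's theorem (Theorem \ref{falconer}) applied to $\Om_1 \setminus \Om_0$, and then glue the resulting foliation to the fixed boundary pieces $\partial \Om_0$ and $\partial \Om_1$ using the corner-smoothing and Morse-gluing machinery of Section \ref{preliminaries} and Lemma \ref{morse}. First I would transport everything into Euclidean space: let $\psi: U \to \psi(U) \subset \R^{n+1}$ be the given $(1+L)$-bilipschitz diffeomorphism. Apply Theorem \ref{falconer} to the open set $\psi(\Om_1 \setminus \Om_0)$ (which has smooth boundary) to obtain a line $l$ and orthogonal projection $p_l$ whose fibers meet $\psi(\Om_1 \setminus \Om_0)$ in $n$-volume at most $C(n)\,\Hnn(\psi(\Om_1 \setminus \Om_0))^{n/(n+1)}$, and such that $p_l$ restricted to $\psi(\partial(\Om_1\setminus\Om_0))$ is Morse. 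Pulling $p_l \circ \psi$ back to $U$ gives a Morse function $h$ on $cl(\Om_1 \setminus \Om_0)$ whose level sets, intersected with $\Om_1 \setminus \Om_0$, have $n$-area at most $(1+L)^n C(n)\,\Hnn(\Om_1 \setminus \Om_0)^{n/(n+1)}$, since the bilipschitz factor distorts $n$-dimensional area by at most $(1+L)^n$ and $(n+1)$-volume by a comparable factor (absorb the volume distortion into the $C(n)$ and into the arbitrariness of $\varepsilon$, or simply note $(1+L)^{n+1\cdot n/(n+1)} = (1+L)^n$).

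Next I would assemble the sweepout. The naive candidate family is
$$\Om_t' = \Om_0 \cup \big(\{h < \phi(t)\} \cap (\Om_1 \setminus \Om_0)\big)$$
for a suitable reparametrization $\phi$, so that $\Om_0' = \Om_0$ and $\Om_1' = \Om_1$. The boundary of such a set decomposes as the part of $\partial \Om_0$ not yet swept, plus a piece of a level set $h^{-1}(\phi(t))$ lying in $\Om_1\setminus\Om_0$, plus the part of $\partial(\Om_1\setminus\Om_0) = \partial\Om_0 \sqcup \partial\Om_1$ already crossed; crudely bounding each piece by the whole gives area at most $\Hn(\partial \Om_0) + \Hn(\partial(\Om_1\setminus\Om_0)) + (1+L)^n C(n)\Hnn(\Om_1\setminus\Om_0)^{n/(n+1)}$, which is exactly the right-hand side of (1) before the $\varepsilon$. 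The issue is that this family is only a nested family of sets with \emph{corners} (where the level set $h^{-1}(\phi(t))$ meets $\partial\Om_0$ or $\partial\Om_1$), not a genuine Morse foliation, and the endpoint condition in (2) asks for an inward $\varepsilon$-perturbation $\Om_0'$ of $\Om_0$ rather than $\Om_0$ itself. To fix both at once I would instead work on $N = cl(\Om_1 \setminus \Om_0)$, which is a compact manifold with boundary $\partial N = \partial\Om_0 \sqcup \partial\Om_1$, take $f = h$ (arranged to be $\partial$-transverse after a small perturbation, with range rescaled to $[0,1]$), and apply Lemma \ref{morse} twice — first with $\Sigma = \partial \Om_1$ to push the foliation out so its top leaf is exactly $\partial\Om_1$, then (after reversing) effectively pushing the bottom toward $\partial\Om_0$ — or, more cleanly, apply Lemma \ref{morse} with $\Sigma$ the smoothing of $\partial\Om_0$ coming from the corner construction of Section \ref{smoothing}, to get a Morse function $g$ on $N$ with $g^{-1}(0)$ an inward-perturbed copy of $\partial\Om_0$, $g^{-1}(1) = \partial\Om_1$, and $\Hn(g^{-1}(t)) \le \Hn(\partial f^{-1}([0,t])) + \varepsilon/2$. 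The observation in Section \ref{smoothing} (item 2) lets us make the corner-smoothing contribution to the area less than any prescribed amount, so it is absorbed into $\varepsilon$. One then checks $\Hn(\partial f^{-1}([0,t])) \le \Hn(f^{-1}(t)\cap(\Om_1\setminus\Om_0)) + \Hn(\partial(\Om_1\setminus\Om_0))$, giving (1). Extending $g$ to a Morse function on all of $M$ in an arbitrary way (constant equal to $0$ on $cl(\Om_0)$, suitably increasing past $1$ outside $\Om_1$, then perturbed to be Morse) produces the nested sweepout $\{\G_t'\} = \{g^{-1}(t)\}_{t\in[0,1]}$ with associated open sets $\Om_t' = g^{-1}((-\infty,t))$, and by construction $\Om_0'$ is an inward $\varepsilon$-perturbation of $\Om_0$ while $\Om_1' = \Om_1$, which is (2). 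For the alternative conclusion (2'), apply the symmetric argument: use Lemma \ref{morse_noncompact} on $M \setminus \Om_0$ (or Lemma \ref{morse} on $N$ with the roles of $\partial\Om_0$ and $\partial\Om_1$ interchanged and sign reversed) to keep $\Om_0' = \Om_0$ fixed while producing an outward $\varepsilon$-perturbation $\Om_1'$ of $\Om_1$; the area estimate is identical.

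The main obstacle I anticipate is the bookkeeping at the two endpoints: Lemma \ref{morse} naturally fixes \emph{one} boundary component as the top level set $g^{-1}(b)$ but does not, on its own, guarantee that the \emph{other} end of the foliation is the prescribed inward perturbation of $\Om_0$ rather than some uncontrolled set. Handling this requires either (a) running Lemma \ref{morse} once with $\Sigma = \partial\Om_1$ and separately arranging, via the distance-function/corner-smoothing construction near $\partial\Om_0$ (as in the $\Om \subset \Om_0$ case of Lemma \ref{lem:split}), that the low-$t$ leaves foliate a collar $N_{\delta}(\partial\Om_0)\cap(\Om_1\setminus\Om_0)$ with controlled area; or (b) symmetrizing and applying Lemma \ref{morse_noncompact} to the reversed function on the complement, as in the proof of Part II of Lemma \ref{lem:split}. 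Either way the subtlety is purely topological/geometric plumbing near $\partial\Om_0$, and the quantitative content — the $C(n)(1+L)^n\Hnn(\Om_1\setminus\Om_0)^{n/(n+1)}$ term — comes entirely from Falconer's theorem together with the bilipschitz distortion bound, with everything else swept into the free parameter $\varepsilon$.
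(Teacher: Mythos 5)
Your quantitative core coincides with the paper's: Falconer's theorem applied in the bilipschitz chart gives the $C(n)(1+L)^{n}\Hnn(\Om_1\setminus\Om_0)^{n/(n+1)}$ term, and everything else is of size $\Hn(\partial\Om_0)+\Hn(\partial(\Om_1\setminus\Om_0))+\varepsilon$. (Minor point: the pullback costs the bilipschitz factor twice, once for the slice area and once for the volume raised to the power $n/(n+1)$, so you get $(1+L)^{2n}$ rather than $(1+L)^n$; this cannot be absorbed into $C(n)$ or $\varepsilon$, but it is harmless in the paper's applications, where the bilipschitz constant is close to $1$.) The assembly, however, is where your plan has a genuine gap, and it sits exactly at the spot you flag as ``plumbing''. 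Lemma \ref{morse} prescribes only the top level $g^{-1}(b)=\Sigma$; it cannot deliver a Morse function on $N=cl(\Om_1\setminus\Om_0)$ with both ends prescribed, and your reading ``$g^{-1}(0)$ an inward-perturbed copy of $\partial\Om_0$, $g^{-1}(1)=\partial\Om_1$'' is not among its conclusions (an inward perturbation of $\Om_0$ does not even lie in $N$). More seriously, when $\Sigma=\partial\Om_1$ is a proper subset of $\partial N$, the intermediate level sets are hypersurfaces with boundary on $\partial\Om_0$, so the sets $\Om_0\cup\{g<t\}$ have boundaries that share pieces of $\partial\Om_0$ for different $t$; such boundaries cannot be level sets of a single Morse function on $M$, i.e.\ the ``naive'' family is not a nested sweepout in the sense of the definition. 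Repairing this along your lines means controlling the foliation near \emph{both} boundary components simultaneously — leaves hugging $\partial\Om_0$ and $\partial\Om_1$ at $t$-dependent depths, with surgeries at the critical points of the Falconer function restricted to each component — which amounts to proving a two-ended strengthening of Lemma \ref{morse}; that is the bulk of the technical content and is only gestured at. Your fix (b) is circular in this setting: Lemma \ref{morse_noncompact}, like Lemma \ref{morse}, is applied in the paper only with $\Sigma$ equal to the entire boundary of the domain, and it presupposes a genuine nested family to modify.

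The paper sidesteps the two-ended problem by a different decomposition, which is the ingredient missing from your plan. It first takes an inward $\varepsilon/8$-perturbation $\Om'$ of $\Om_1\setminus\Om_0$ and sweeps $\Om'$ out \emph{from the inside} using Falconer plus Lemma \ref{morse} with $\Sigma=\partial\Om'$ (the whole boundary — the only configuration in which that lemma yields closed leaves bounding open sets), while running in parallel, as a second disjoint component, an almost static family of inward perturbations of $\Om_0$ with area $\leq\Hn(\partial\Om_0)+\varepsilon/2$. It then merges the two components and expands out to $\partial\Om_1$ using Lemma \ref{gluing_separated} (the distance-function gluing lemma), whose bound $\Hn(\partial N)+2\Hn(\Sigma)+\varepsilon$ is exactly what produces the $\Hn(\partial\Om_0)+\Hn(\partial(\Om_1\setminus\Om_0))$ slack in (1); the statement (2) versus (2') is handled by the symmetric construction. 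You never invoke Lemma \ref{gluing_separated} and have no substitute for this merging step, so as written the proposal does not close.
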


\begin{proof}

Let $\Om'$ be an inward $\varepsilon/8$-perturbation 
of $\Om_1 \setminus \Om_0$.
By Theorem \ref*{falconer}
there exists a Morse function $f: \Om' \rightarrow [0,1]$
with fibers of area at most 
$C(n)(1+L)^n \Hnn(\Om_1 \setminus \Om_0)^{\frac{n}{n+1}}
+ \varepsilon/4$.
By Lemma \ref*{morse} there exists a nested sweepout of $\Om'$ 
$\{ \Si_t^a \}$ with a corresponding family of open sets
$\{ \Xi_t^a \}$, such that $\Xi_1^a = \Om'$ and
$\Hn(\Si_t^a) \leq \Hn(\partial (\Om_1 \setminus \Om_0))+ C(n)(1+L)^n \Hnn(\Om_1 \setminus \Om_0)^{\frac{n}{n+1}}
+ \varepsilon/2$.

Let $\{ \Si_t^b \}$ be a nested family with a corresponding family of open sets
$\{ \Xi_t^b \}$, such that $\Xi_0^b$ is an inward $\varepsilon/2$-perturbation
of $\Om_0$, $\Xi_1^b$ is an inward $\varepsilon/8$-perturbation
of $\Om_0$ and the areas of all hypersurfaces are at most
$\Hn(\partial \Om_0)+ \varepsilon/2$.

By Lemma \ref*{gluing_separated} there exists a nested family
$\{ \Si_t^c \}$ with a corresponding family of open sets
$\{ \Xi_t^c \}$, such that $\Xi_1^c = \Om_1$,
$\Xi_0^c = \Xi^1 \sqcup \Xi^2$,
where $\Xi^1$ is an inward $\varepsilon/8$-perturbation 
of $\Om_0$ and $\Xi^2$ is an inward $\varepsilon/8$-perturbation 
of $\Om_1 \setminus \Om_0$. It follows from the properties of perturbations
that, without any loss of generality, we may assume 
$\Xi^1 = \Xi_1^b$ and $\Xi^2 = \Om'$.

We define $\G_t' = \Si_{2t}^a \cup \Si_{2t}^b$
for $t \in [0,1/2)$ and $\G_t' = \Si_{2t-1}^c$
$t \in [0,1/2]$ with the open sets defined correspondingly.

We leave it to the reader to verify that
a similar construction yields a family satisfying
(2') instead of (2).
\end{proof}

\section{Nested sweepouts}


In this section we prove the following proposition.

\begin{proposition}
	\label{prop:nested}
For every $\varepsilon > 0$, given a family of hypersurfaces
$\{ \Gamma_t \}$ with the corresponding family
of open sets $\{ \Om_t \}$ and
$\Hn(\Gamma_t)\leq A$, there exists a nested family
$\{ \tilde{\Gamma}_t \}$  with the corresponding family
of open sets $\{ \tilde{\Om}_t \}$,
such that $\tilde{\Om}_0 \subset \Om_0$,
$\Om_1 \subset \tilde{\Om}_1$ and
$\Hn(\tilde{\Gamma}_t)\leq A + \varepsilon$.

In particular,	for any bounded open set
$U \subset M$ with smooth boundary we have $W(U) = W_n(U)$.

\end{proposition}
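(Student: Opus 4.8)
**Proof proposal for Proposition \ref*{prop:nested}.**

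The plan is to start with an arbitrary family $\{\Gamma_t\}$ with corresponding open sets $\{\Om_t\}$ and $\Hn(\Gamma_t)\le A$, and to replace it, over finitely many stages, by a nested family whose areas are bounded by $A+\varepsilon$. The key idea is a discretization: using (sw3) and the continuity of $t\mapsto\Hn(\Gamma_t)$, choose a fine partition $0=t_0<t_1<\dots<t_N=1$ so that on each subinterval $[t_{i-1},t_i]$ the symmetric difference $\Om_{t_i}\triangle\Om_{t_{i-1}}$ has tiny volume and the areas of the boundaries vary by less than $\varepsilon/4N$. On each such short step I will use Lemma \ref*{lem:extension} (and, where needed, the splitting Lemma \ref*{lem:split}) to interpolate between $\partial\Om_{t_{i-1}}$ and $\partial\Om_{t_i}$ by a \emph{nested} subfamily whose areas exceed $\max\{\Hn(\Gamma_{t_{i-1}}),\Hn(\Gamma_{t_i})\}$ by at most the error from the isoperimetric term $C(n)(1+L)^n\,\Hnn(\Om_{t_i}\triangle\Om_{t_{i-1}})^{n/(n+1)}$ plus a small slack. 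Since the symmetric differences can be made as small as we like (choosing $N$ large, after covering $\mathrm{cl}(M)$ — or the relevant compact region — by finitely many bilipschitz charts and refining the partition so each $\Om_{t_i}\triangle\Om_{t_{i-1}}$ fits inside one chart), this isoperimetric error is negligible, and concatenating the $N$ nested pieces yields a nested family with areas $\le A+\varepsilon$.

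More precisely, the induction carries a nested family already built up to the set $\Om_{t_{i-1}}$ (with endpoint hypersurface $\partial\Om_{t_{i-1}}$, of area close to $\Hn(\Gamma_{t_{i-1}})$) and seeks to extend it to $\Om_{t_i}$. The obstruction is that $\Om_{t_{i-1}}$ and $\Om_{t_i}$ need not be nested — only their symmetric difference is small. So I first handle $\Om_{t_{i-1}}\cap\Om_{t_i}$: shrink slightly to get a nested passage from $\partial\Om_{t_{i-1}}$ down to $\partial(\Om_{t_{i-1}}\cap\Om_{t_i})$ using Lemma \ref*{lem:split} part I with $\Om=\Om_{t_{i-1}}\cap\Om_{t_i}$, noting that $\partial(\Om_{t_{i-1}}\cap\Om_{t_i})$ has area at most $\Hn(\Gamma_{t_{i-1}})+\Hn(\Gamma_{t_i})$ and in fact (because $\Om_{t_{i-1}}\setminus\Om_{t_i}$ is small) can be realized with area close to $\Hn(\Gamma_{t_{i-1}})$ after passing through nearly-minimizing sets; then I grow from $\partial(\Om_{t_{i-1}}\cap\Om_{t_i})$ up to $\partial\Om_{t_i}$, adding the small region $\Om_{t_i}\setminus\Om_{t_{i-1}}$, via Lemma \ref*{lem:extension} (the version giving an outward perturbation, then absorbed). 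All of this happens inside a bilipschitz chart, so Lemma \ref*{lem:extension}'s bound applies; the added area is $O(\Hnn(\Om_{t_i}\triangle\Om_{t_{i-1}})^{n/(n+1)})+$ slack, which we make $<\varepsilon/N$ per step. Summing the slacks over the $N$ steps, and absorbing the $\varepsilon/4$'s from the two lemmas, gives the global bound $A+\varepsilon$. The perturbation/smoothing observations of Section \ref*{smoothing} ensure that the concatenated family is a genuine family of hypersurfaces satisfying (s1)--(s3) and that the Morse functions from the pieces glue to a single Morse function $f:M\to[-1,\infty)$ with $\tilde\G_t=f^{-1}(t)$; here the freedom to take $\tilde\Om_0\subset\Om_0$ and $\Om_1\subset\tilde\Om_1$ is used to accommodate the inward/outward perturbations at the two ends.

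The hardest step is the per-step interpolation when $\Om_{t_{i-1}}$ and $\Om_{t_i}$ genuinely differ: one must route a \emph{nested} family through the non-monotone change while keeping the area controlled, and this is exactly why the isoperimetric input (Falconer's theorem, Theorem \ref*{falconer}) enters through Lemma \ref*{lem:extension} — the small symmetric-difference region is swept out by thin slices of controlled area. A secondary technical point is the bookkeeping to ensure that the maximum area over each interpolating subfamily does not exceed $\max\{\Hn(\Gamma_{t_{i-1}}),\Hn(\Gamma_{t_i})\}+\varepsilon/N$ rather than, say, the sum of the two; this is handled by the ``nearly area-minimizing'' choices in Lemma \ref*{lem:split} (conditions (2) and (2)$'$), which let the intermediate hypersurfaces be compared against the smaller of the two endpoint areas. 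Finally, the last sentence of the proposition is immediate: any sweepout of a bounded open set $U$ with smooth boundary, having all slices of area $\le W(U)+\delta$, is converted by the above into a nested sweepout with slices $\le W(U)+\delta+\varepsilon$; since $\tilde\Om_0\subset\Om_0$ and $\Om_1\subset\tilde\Om_1$ preserve (sw2), this is a nested sweepout of $U$, so $W_n(U)\le W(U)$, and the reverse inequality $W(U)\le W_n(U)$ holds trivially since every nested sweepout is a sweepout.
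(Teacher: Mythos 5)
Your outline misses the central difficulty of this proposition, and as written it does not produce a nested family. Within each step you pass from $\partial\Om_{t_{i-1}}$ \emph{down} to $\partial(\Om_{t_{i-1}}\cap\Om_{t_i})$ and then \emph{up} to $\partial\Om_{t_i}$. That is not monotone: a nested family requires $\tilde\Om_s\subset\tilde\Om_t$ for all $s\le t$, equivalently a single Morse function $f$ with $\tilde{\G}_t=f^{-1}(t)$ and $\tilde\Om_t=f^{-1}((-\infty,t))$, so no smoothing or gluing of the per-step Morse functions can turn your down-then-up pieces into level sets of one such $f$. Concatenating your steps yields an oscillating (but locally controlled) sweepout — essentially what the paper obtains after its Step 1, Lemma \ref{continuity in mass norm} — not a nested one. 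The missing idea is the paper's gluing mechanism (Proposition \ref{gluing} together with Lemma \ref{lem:split}): given two consecutive monotone pieces with $\Om_0^b\subset\Om_1^a$, one chooses an open set $\Om$ which \emph{nearly minimizes} boundary area among all sets sandwiched between $\Om_0^b$ and $\Om_1^a$; this near-minimality is exactly what hypotheses (2) and (2)' of Lemma \ref{lem:split} require, and the lemma then re-routes the \emph{entire} first nested family to end on $\partial\Om$ and the entire second to start on $\partial\Om$, keeping all areas below $A+\varepsilon$. This is also why the proposition only claims $\tilde\Om_0\subset\Om_0$ and $\Om_1\subset\tilde\Om_1$: the re-routing shrinks the initial set and enlarges the final one. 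Your appeal to Lemma \ref{lem:split} with $\Om=\Om_{t_{i-1}}\cap\Om_{t_i}$ does not verify hypothesis (2), and for the bare intersection it generally fails.

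There is also a quantitative gap: small symmetric difference does not control $\Hn(\partial(\Om_{t_{i-1}}\cap\Om_{t_i}))$. Even if $\Hnn(\Om_{t_{i-1}}\triangle\Om_{t_i})$ is tiny, $\G_{t_i}$ may run close to and inside $\Om_{t_{i-1}}$ so that the boundary of the intersection has area near $\Hn(\G_{t_{i-1}})+\Hn(\G_{t_i})\approx 2A$, destroying the bound $A+\varepsilon$; your parenthetical claim that this boundary ``can be realized with area close to $\Hn(\G_{t_{i-1}})$'' is unsubstantiated and is in fact the whole content of the paper's Step 1. There the exchange of the symmetric difference is performed ball by ball over a fine cover, so each modification changes the area by at most a term of size $\varepsilon/20$ plus a controlled contribution from $\partial B_i'$ chosen by the coarea formula, and Falconer's theorem (via Lemma \ref{lem:extension}) is applied only inside single small balls. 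So the two genuinely hard points — keeping intermediate areas near $A$ while exchanging the symmetric difference, and converting the resulting locally monotone pieces into one globally nested family via nearly minimizing interfaces — are precisely the ones your proposal leaves unresolved.
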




The proof proceeds in three steps.

\subsection{Step 1. Preliminary modification of the family.}
We start by replacing the original family $\{\G_t\}$
with a new family $\{\G_t'\}$
that possesses the property that
every hypersurface in the family nearly coincides in the complement
of a small ball with some hypersurface
from a finite list $\{\G_{t_i}' \}$.
This construction is inspired by
constructions of families, which are continuous
in the mass norm in the work of Pitts and 
Marques-Neves (see \cite[4.5]{Pi} and \cite[Theorem 14.1]{MN1}).

\begin{lemma} \label{continuity in mass norm}
 For any $\varepsilon>0$ there exists 
 a partition $0=t_0< ... < t_N=1$ of $[0,1]$ and
 a family $\{\G'_t\}$ with the corresponding
 family of open sets $\{\Om'_t\}$, such that 
 the following holds:
 
  (1.1) $\Om'_0 \subset \Om_0$ and $\Om_1 \subset \Om'_1$;
  
  (1.2) $\sup \{\Hn(\G'_t) \} < \sup \{\Hn(\G_t) \} + \varepsilon$;
  
  
  (1.3) For each $i=0,...,N-1$ we have one of the two possibilities:
  
        A. $\Om'_{t_i} \subset \Om'_{t_{i+1}}$ and there exists a Morse function
        $g_i: cl(\Om_{t_{i+1}} \setminus \Om_{t_i}) \rightarrow [t_i, t_{i+1}]$, such that
        $\G'_t = g_i^{-1}(t)$ and $\Om'_t = \Om_{t_i}' \cup g_i^{-1}(-\infty,t)$
        for $t \in [t_i, t_{i+1}]$.
  
        B. $\Om'_{t_{i+1}} \subset \Om'_{t_i}$ and there exists a Morse function
        $g_i: cl(\Om_{t_i} \setminus \Om_{t_{i+1}}) \rightarrow [t_i, t_{i+1}]$, such that
        $\G'_t = g_i^{-1}(t)$ and $\Om'_t = \Om_{t_i}' \setminus g_i^{-1}(-\infty,t]$
        for $t \in [t_i, t_{i+1}]$.

\end{lemma}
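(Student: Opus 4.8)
The strategy is the ``continuity in the mass norm'' construction of \cite{Pi} and \cite[Theorem 14.1]{MN1}: first discretize the time parameter, then replace $\{\G_t\}$ on each short subinterval by a family that is monotone on that subinterval and is assembled from Morse foliations of thin regions, losing only a tiny amount of area in the process.

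First I would choose the partition. Fix $\eta>0$, to be pinned down at the very end in terms of $\varepsilon$. Using the volume--continuity (sw3) of $t\mapsto\Om_t$ together with a compactness argument, and the uniform continuity of $t\mapsto\Hn(\G_t)$ coming from (s2), pick $0=\tau_0<\dots<\tau_m=1$ fine enough that whenever $s,t$ lie in a common subinterval $[\tau_j,\tau_{j+1}]$ one has $\Hnn(\Om_s\setminus\Om_t)+\Hnn(\Om_t\setminus\Om_s)<\eta$ and $|\Hn(\G_s)-\Hn(\G_t)|<\varepsilon/4$. After a $C^0$-small perturbation of the surfaces $\G_{\tau_j}$ (changing areas by less than $\varepsilon/8$ and volumes by less than $\eta$) I may also assume $\G_{\tau_j}\pitchfork\G_{\tau_{j+1}}$ for each $j$, so that $R_j^-:=cl(\Om_{\tau_j}\setminus\Om_{\tau_{j+1}})$ and $R_j^+:=cl(\Om_{\tau_{j+1}}\setminus\Om_{\tau_j})$ are manifolds with corners, of volume less than $\eta$, whose boundary faces lie on $\G_{\tau_j}\cup\G_{\tau_{j+1}}$.

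Next, on each $[\tau_j,\tau_{j+1}]$ I would insert a midpoint $m_j$ and let the new open sets decrease from $\Om_{\tau_j}$ to $\Om_{\tau_j}\cap\Om_{\tau_{j+1}}$ on $[\tau_j,m_j]$ by removing $R_j^-$, and increase from $\Om_{\tau_j}\cap\Om_{\tau_{j+1}}$ to $\Om_{\tau_{j+1}}$ on $[m_j,\tau_{j+1}]$ by adding $R_j^+$ (the second passage really is an increase because $\Om_{\tau_j}\cap\Om_{\tau_{j+1}}\subset\Om_{\tau_{j+1}}$). Concatenating over $j$ and relabelling the enlarged partition as $0=t_0<\dots<t_N=1$ yields a family that is monotone on each subinterval and coincides with $\{\G_t\}$ at the times $\tau_j$; this is precisely the A/B dichotomy of (1.3), the thin region being an $R^+_j$ in case~A and an $R^-_j$ in case~B. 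To realize each of these two passages as a genuine Morse foliation with controlled leaves I would: pass to an inward $\eta$-perturbation of the thin region with smooth boundary (Definition~\ref{delta-perturbation}); build on it a Morse function whose level sets have area at most $C(n)\eta^{n/(n+1)}+\varepsilon/8$ by Falconer's theorem~\ref{falconer} (applied in bilipschitz charts, subdividing the region into finitely many chart-sized pieces if necessary and paying a dimensional constant; compare Lemma~\ref{lem:extension}); apply Lemma~\ref{morse} or Lemma~\ref{morse_noncompact} to arrange that its extreme level set is the relevant boundary face $\G_{\tau_j}\cap R^-_j$ resp. $\G_{\tau_{j+1}}\cap R^+_j$; and smooth all corners parametrically as in \S\ref{smoothing}. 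The conditions (s1)--(s3) for $\{\G'_t\}$ then follow from the parametric smoothing, and (1.1) is obtained by a final inward $\varepsilon/8$-perturbation of $\Om_0$ at the left endpoint and an outward $\varepsilon/8$-perturbation of $\Om_1$ at the right endpoint.

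The step I expect to be the main obstacle is the area bound (1.2). Along the shrinking passage the new leaf $\G'_t$ is the union of the still-exposed part of $\G_{\tau_j}$ with the advancing cut inside the thin region $R^-_j$; the difficulty is that near the two ends of the passage the cut degenerates onto a face of $\partial R^-_j$ --- a subset of $\G_{\tau_j}$ or of $\G_{\tau_{j+1}}$ --- so the naive estimate ``a full copy of $\G_{\tau_j}$ plus a full face of $R^-_j$'' is no better than $2A$. The point to exploit is that the cut \emph{replaces} the portion of $\G_{\tau_j}$ it has already passed, so what must actually be bounded is $\Hn\big(\G_{\tau_j}\setminus(\text{swept part})\big)+\Hn(\text{cut})$; keeping this below $A+\varepsilon$ requires organizing the Morse function on $R^-_j$ carefully, using $\Hnn(R^\pm_j)<\eta$ together with the inequality $\Hn(g^{-1}(t))\le\Hn(\partial f^{-1}([a,t]))+\varepsilon$ of Lemma~\ref{morse}, applied over short ranges of levels so that $\partial f^{-1}([a,t])$ never accumulates more than a controlled portion of $\partial R^\pm_j$. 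The growing passage is symmetric. Collecting all the errors --- $\varepsilon/8$ from the preliminary perturbation, $\varepsilon/8$ at each endpoint, and $O(\varepsilon)+C(n)\eta^{n/(n+1)}$ from the foliations and smoothings --- and finally choosing $\eta$ small enough in terms of $\varepsilon$ gives $\sup_t\Hn(\G'_t)<\sup_t\Hn(\G_t)+\varepsilon$, which is (1.2), completing the proof.
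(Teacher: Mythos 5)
There is a genuine gap, and it sits exactly where you flag ``the main obstacle'': the area bound (1.2) for your global monotone sweep of the thin regions $R_j^{\pm}$ cannot be rescued by ``organizing the Morse function carefully'' over short ranges of levels. The quantity that Lemma~\ref{morse} controls is $\Hn(g^{-1}(t))\le \Hn(\partial f^{-1}([a,t]))+\varepsilon$, and $\partial f^{-1}([a,t])$ is precisely the term that blows up: for a \emph{nested} exhaustion of all of $R_j^{-}$, the accumulated boundary must at some intermediate time contain a large-area portion of the face lying on $\G_{\tau_{j+1}}$ while the matching portion of $\G_{\tau_j}$ above it has not yet been excised from $\partial\Om'_t$. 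Smallness of $\Hnn(R_j^{-})<\eta$ does not prevent this: take a thin lens in which the $\G_{\tau_{j+1}}$-face is nearly flat and of area close to $A$ while the $\G_{\tau_j}$-face is wrinkled, and sweep by a linear (Falconer) projection from the flat side; just after the start, $\partial(\Om_{\tau_j}\setminus f^{-1}([a,t]))$ has area close to $2A$ even though the moving fiber is tiny. Restricting to short level ranges does not help, because the problem is not the fiber but the accumulated face, and monotonicity forces it to pass through all intermediate amounts. So the step you defer is not a technical refinement; the global-sweep route genuinely fails, and no choice of Morse function on the whole region $R_j^{-}$ can simultaneously be nested and keep the intermediate boundaries below $A+\varepsilon$ in general.

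The paper's proof avoids this by a different decomposition: it never exchanges the whole symmetric difference at once. One first fixes a compact set containing all $\Om_t$ and a radius $r$ so small that \emph{every} ball of radius $r$ meets \emph{every} $\G_t$ in area less than $\varepsilon/20$, covers by $k$ such balls, and refines the partition so that each $\Om_{s_j}\triangle\Om_{s_{j+1}}$ has tiny volume and suitable concentric spheres $\partial B_i'$ meet it in area at most $\varepsilon/(4k)$. The passage from $\Om_{s_j}$ to $\Om_{s_{j+1}}$ is then performed in $2k$ sub-steps, one ball at a time: inside $B_i$ one removes $B_i\cap(\Om_{s_j}\setminus\Om_{s_{j+1}})$ and adds $B_i\cap(\Om_{s_{j+1}}\setminus\Om_{s_j})$, interpolating within the ball via Lemma~\ref{lem:extension} (which is where Theorem~\ref{falconer} enters, applied to a set of small volume in a bilipschitz chart). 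Because each elementary modification is confined to a single ball where all the relevant pieces of surface have area $O(\varepsilon)$, and because after a ball is processed the surface there already coincides with $\G_{s_{j+1}}$, the intermediate areas stay within $\varepsilon$ of $\Hn(\G_{s_j})$ and errors do not accumulate; the A/B dichotomy of (1.3) then comes from these monotone sub-steps rather than from a single down-then-up passage per interval. If you want to salvage your write-up, the localization into balls with the uniform smallness property $\Hn(B\cap\G_t)<\varepsilon/20$ is the missing idea you would need to import.
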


\begin{proof}
Let $M'$ be a compact subset of $M$
that contains the closure of $\Om_t$ for all $t \in [0,1]$.
Choose $r$ sufficiently small 
so that for 
 every ball $B$ of radius less than or equal to $r$ in $M'$ the following holds:

(i) $B$
is $(1+\frac{\varepsilon}{100W})^{1/n}-$bilipschitz diffeomorphic
to the Euclidean ball of the same radius;

(ii) $\Hn(B \cap \G_t) < \frac{\varepsilon}{20}$ for $t \in [0,1]$.

Condition (ii) can be realized because of the no concentration of 
mass property (s4).

Let $\{B_i\}$ be a collection of $k$ balls of radius $r$ covering
$M'$, such that balls of half the radius cover $M'$. We choose a partition 
$0=s_0< ... < s_{N'}=1$, such that 


(iii) $\Hnn(B_i \cap (\Om_{s_j} \setminus \Om_{s_{j+1}})) + 
\Hnn(B_i \cap (\Om_{s_{j+1}} \setminus \Om_{s_j})) < 
\min \{ \frac{r \varepsilon}{10k}, (\frac{\varepsilon}{10}) ^{\frac{n+1}{n}} \}$;

(iv) $|\Hn(B_i  \cap \G_{s_j}) - \Hn(B_i \cap \G_{s_{j+1}}| \leq \frac{\varepsilon}{10k}$

for each $j=0,...,N'$ and $i=1,...,k$.

We define the new family $\{\G'_t\}$ as follows.
For $t=s_j$ we set $\Om'_t = \Om_t$
and $\G'_t = \partial \Om_t$, unless $\G_t$ is a finite collection
of points in which case we set $\G'_t = \G_t$ and $\Om'_t = \emptyset$.

Define a subdivision of $[s_j, s_{j+1}]$ into $2k$ subintervals,
$s_j=s_j^0<...<s_j^{2k}=s_{j+1}$.
Let $\{B'_i\}$ be a collection of $k$ balls concentric with
$B_i$ of radius between $r/2$ and $r$ and such that
$\partial B'_i$ intersects $\G_{s_j}$ and $\G_{s_{j+1}}$
transversally. Set $U_j^1 = \Om_{s_j} \setminus \Om_{s_{j+1}}$
and $U_j^2 = \Om_{s_{j+1}} \setminus \Om_{s_j}$.
By coarea formula and property (iii)
for our choice of the subdivision $0=s_0< ... < s_{N'}=1$
we may assume that $B'_i$ satisfies
$\Hn(\partial B'_i \cap (U_j^1 \cup U_j^2)) \leq \frac{\varepsilon}{4k}$. 

By our choice of $B_i$ we have
that the collection of balls $\{B'_i\}_{i=1}^k$ still cover $M'$.
Inductively we define 
$$\Om'_{s_j ^{0}} = \Om'_{s_j}$$
$$\Om'_{s_j ^{2i-1}} = \Om'_{s_j ^{2i-2}} \setminus (B_i' \cap U_j^1)$$
$$\Om'_{s_j ^{2i}} = \Om'_{s_j ^{2i-1}} \cup (B_i' \cap U_j^2)$$ 
for $i=1,...,k$.

Surfaces $\partial \Om'_{s_j^{l}}$ may not be smooth,
but there exists an arbitrarily small perturbation so that the boundaries are smooth
(see Section \ref*{smoothing}).
We perform these perturbations in the inward direction for $\Om'_{s_j ^{2i-1}}$
and in the outward direction for $\Om'_{s_j ^{2i}}$. 
To simplify notation we do not rename the sets after the perturbations;
since the perturbations are arbitrarily small all the estimates 
for areas and volumes remain valid.

The following properties follow from the definition and (i)-(ii):


(a) $|\Hn(\partial \Om'_{s_j^{l}}) - \Hn(\G_{s_j})|< \varepsilon/2$;

(b) $\Om'_{s_j ^{2i-1}} \subset \Om_{s_j ^{2i}}$ and
$\Om_{s_j ^{2i-1}} \subset \Om_{s_j ^{2i-2}}$.


We define $\G'_{s_j^{l}} = \partial \Om'_{s_j^{l}}$, unless
$\Om'_{s_j^{l}}$ is empty. If $\Om'_{s_j^{l}}$ is empty we
set $\G'_{s_j^{l}}$ to be a point inside $\Om'_{s_j^{l-1}}$.
By properties (iii) and (iv) surface $\G'_{s_j^{l}}$
will satisfy the desired upper bound on the area.

To complete our construction we need to show 
existence of two types of nested families:
a nested family that starts on $\G'_{s_j^{2i-1}}$
and ends on $\G'_{s_j^{2i-2}}$;
a nested family that starts on $\G'_{s_j^{2i-1}}$
and ends on $\G'_{s_j^{2i}}$.
In both cases we want the homotopies to 
satisfy the desired upper bound on the areas.

Consider the set $\Om'_{s_j^{2i-2}} \setminus \Om'_{s_j^{2i-1}} 
= B_i \cap U_j^1$.
After smoothing the corner (see Section \ref*{smoothing})
we call this set $U$.
We map $B_{i}$ to $\mathbb{R}^{n+1}$ by 
a $(1+\frac{\varepsilon}{100W})^{1/n}$-bilipschitz 
diffeomorphism. 
Existence of the desired nested families
follows by Lemma \ref*{lem:extension}.
The upper bound for the area follows form the upper bound
for the area at the endpoints and properties (i) and (ii).
%
\end{proof}

\subsection{Step 2. Local monotonization} 
Assume that family $\{\G_t\}$ satisfies
conclusions of Lemma \ref*{continuity in mass norm}
for the subdivision $0=t_0< ... < t_N=1$.

For every $\varepsilon>0$ and each $i=0,...,N-1$ we will define sets
$\Om^i_0$ and $\Om^i_1$,
such that the following holds:

(2.1) $\Om^i_0 \subset \Om^i_1$;

(2.2) $\max \{ \Hn( \partial \Om^i_0), 
\Hn(\partial \Om^i_1) \} 
    \leq \max \{ \Hn(\G_{t_i}), \Hn(\G_{t_{i+1}})\} + \varepsilon$;

(2.3) $\Om_{t_{i+1}} \subset \Om^i_1$ and
$  \Om^i_0 \subset \Om_{t_{i}}$;
        

(2.4) There exists a nested family of hypersurfaces
$\{\G^i_t\}$, $0 \leq t \leq 1$,
with the corresponding 
family of nested open sets
$\Om^i_t$, such that
$\Hn(\G^i_t) \leq \max \{ \Hn(\G_{t_i}), \Hn(\G_{t_{i+1}})\} + \varepsilon$.

\textbf{Definition of $\Om^i_0$ and $\Om^i_1$}

Assume (2.1) - (2.4) are satisfied for all $\Om^j_0$ and $\Om^j_1$ for
$j<i$.
By Lemma \ref*{continuity in mass norm} (1.3)
we only need to consider the following two cases:

(A) $\Om_{t_{i}} \subset \Om_{t_{i+1}}$. In this first case we define $\Om^i_0 = \Om_{t_{i}}$
and $\Om^i_1 = \Om_{t_{i+1}}$. Properties (2.1)-(2.3) follow
immediately from the definition. 
Property (2.5) follows by Lemma \ref*{continuity in mass norm} (1.3).

(B) $\Om_{t_{i+1}} \subset \Om_{t_{i}}$. 
We define $\Om^i_0 = \Om_{t_{i+1}} \setminus cl(N_{\delta}( \partial \Om_{t_{i+1}}))$,
where $\delta>0$ is chosen sufficiently small so that $cl(N_{\delta}( \partial \Om_{t_{i+1}}))$
is diffeomorphic to $\partial \Om_{t_{i+1}} \times [-\delta, \delta]$ and hypersurfaces
equidistant from $\partial \Om_{t_{i+1}}$ in this neighbourhood all have areas
less than $\Hn (\partial \Om_{t_{i+1}}) + \varepsilon/2$. We set $\Om^i_1 = \Om_{t_{i+1}}$.


It is straightforward to verify that with these definitions
$\Om^i_0$ and $\Om^i_1$ satisfy (2.1)-(2.4).

The following important property is an immediate consequence of (2.3):

(2.5) $\Om^{i+1}_0 \subset \Om^{i}_1$.

Informally, the reason why (2.5) holds is because
to construct $\Om^{i+1}_0$ we push $\Om_{t_{i+1}}$ 
inwards (or not at all) and to construct $\Om^{i}_1$ we push
$\Om_{t_{i+1}}$ outwards (or not at all).

\subsection{Step 3. Gluing two nested families} 

We prove the following:

\begin{proposition} \label{gluing}
Suppose $\{\G_t^a\}$ and $\{\G_t^b\}$
are two nested families
(with corresponding families of open sets
 $\{\Om_t^a\}$ and $\{\Om_t^b\}$ respectively)
and $\Hn(\G_t^i) \leq W$. 
Suppose moreover that $\Om_0^b \subset \Om_1^a$.
For any $\varepsilon> 0$ there exists a nested family
$\{\G_t\}$ and a corresponding family 
of open sets $\{\Om_t\}$, such that 
$\Hn(\G_t) \leq W+ \varepsilon$,
$  \Omega_1 ^b \subset \Omega_1$
and $ \Omega_0 \subset \Omega_0 ^a$.
\end{proposition}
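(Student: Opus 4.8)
The plan is to concatenate the two nested families along a nearly area-minimizing hypersurface squeezed between $\Om_1^a$ and $\Om_0^b$, using the splitting Lemma \ref{lem:split} to reattach each family to that hypersurface while controlling areas. First I would define
$$ A_0 = \inf \{ \Hn(\partial \Om) : \Om_0^a \subset \Om \subset \Om_1^a \} $$
and pick an open set $\Om^*$ with $\Om_0^a \subset \Om^* \subset \Om_1^a$, $\partial \Om^*$ a smooth embedded hypersurface (after a small perturbation in the sense of Section \ref{smoothing}), and $\Hn(\partial \Om^*) \leq A_0 + \varepsilon/8$. Since $\Om_0^b \subset \Om_1^a$, one also has the freedom to shrink $\Om^*$ further inside $\Om_1^a$; what matters is that $\Om^*$ is $\delta$-nearly minimizing among sets sandwiched between $\Om_0^a$ and $\Om_1^a$, i.e. condition (2) of Lemma \ref{lem:split}.I with $\Gamma = \partial \Om^*$ holds with $\varepsilon$ replaced by $\varepsilon/2$. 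Because $\partial \Om^* $ competes against $\Om_1^a$ itself, $\Hn(\partial \Om^*) \leq \Hn(\G_1^a) + \varepsilon/8 \leq W + \varepsilon/8$.

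Next I would apply Lemma \ref{lem:split}.I to the family $\{\G_t^a\}$ with the set $\Om^*$: since $\Om^* \subset \Om_1^a$ and $\Om^*$ is nearly minimizing among sets between (any subset containing) $\Om_0^a$ and $\Om_1^a$, we obtain a nested family $\{\tilde{\G}_t^a\}$ with associated open sets $\{\tilde{\Om}_t^a\}$ such that $\tilde{\Om}_0^a \subset \Om_0^a$, $\tilde{\G}_1^a = \partial \Om^*$, and every hypersurface has area at most $W + \varepsilon/2$. Symmetrically, I would apply Lemma \ref{lem:split}.II to $\{\G_t^b\}$ with the same set $\Om^*$: here I need $\Om_0^b \subset \Om^*$, which may fail for the $\Om^*$ chosen above, so in fact I should choose $\Om^*$ more carefully — take it to be nearly minimizing among sets $\Om$ with $\Om_0^a \cup \Om_0^b \subset \Om \subset \Om_1^a$ (this set is nonempty since $\Om_0^b \subset \Om_1^a$ and $\Om_0^a \subset \Om_1^a$; one may need to enlarge slightly, replacing $\Om_1^a$ by a small outward perturbation, but the area bound is preserved up to $\varepsilon/8$). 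With $\Om_0^b \subset \Om^*$, Lemma \ref{lem:split}.II yields a nested family $\{\tilde{\G}_t^b\}$ with $\Om_1^b \subset \tilde{\Om}_1^b$, $\tilde{\G}_0^b = \partial \Om^*$, and all areas at most $W + \varepsilon/2$.

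Finally I would concatenate: reparametrize $\{\tilde{\G}_t^a\}_{t\in[0,1]}$ to $t \in [0,1/2]$ and $\{\tilde{\G}_t^b\}_{t\in[0,1]}$ to $t\in[1/2,1]$, and set $\G_t$ to be the corresponding member, with $\G_{1/2} = \partial \Om^*$. The concatenated family of open sets is nested: on $[0,1/2]$ the sets $\tilde{\Om}_t^a$ increase up to $\Om^*$, and on $[1/2,1]$ the sets $\tilde{\Om}_t^b$ increase from $\Om^*$, and they agree at $t=1/2$. One checks that this concatenation satisfies (s1)--(s3) and (sw1)--(sw3): smoothness and Hausdorff-continuity hold on each half by Lemma \ref{lem:split}, and at the junction both families limit to $\partial \Om^*$, so continuity there follows; the volume condition (sw3) at $t=1/2$ holds since both sides converge to $\Om^*$. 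We get $\Om_0 = \tilde{\Om}_0^a \subset \Om_0^a$ and $\Om_1^b \subset \tilde{\Om}_1^b = \Om_1$, and $\Hn(\G_t) \leq W + \varepsilon/2 < W + \varepsilon$, as desired. The main obstacle is the bookkeeping needed to guarantee a single $\Om^*$ that is simultaneously usable as the ``right endpoint'' for Lemma \ref{lem:split}.I applied to $\{\G_t^a\}$ and the ``left endpoint'' for Lemma \ref{lem:split}.II applied to $\{\G_t^b\}$ — this requires the near-minimality of $\partial \Om^*$ to be measured against the correct family of competitor sets (those sandwiched between $\Om_0^a \cup \Om_0^b$ and $\Om_1^a$), and a small perturbation of $\Om_1^a$ outward so that $\Om_0^b$ genuinely sits inside $\Om^*$; all area losses in these perturbations and in the two applications of Lemma \ref{lem:split} are absorbed into $\varepsilon$.
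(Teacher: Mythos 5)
Your overall strategy is the same as the paper's: pick a nearly area-minimizing open set squeezed between the two families, use Lemma \ref{lem:split}.I to make the first family end on its boundary and Lemma \ref{lem:split}.II to make the second family start on it, then concatenate. However, your choice of competitor class creates a genuine gap. You take $\Om^*$ nearly minimizing among sets $\Om$ with $\Om_0^a \cup \Om_0^b \subset \Om \subset \Om_1^a$. This verifies hypothesis (2) of Lemma \ref{lem:split}.I (competitors $\Om'$ with $\Om^* \subset \Om' \subset \Om_1^a$ automatically contain $\Om_0^a \cup \Om_0^b$, hence lie in your class), but it does \emph{not} verify hypothesis (2)' of Lemma \ref{lem:split}.II: there the competitors are sets $\Om'$ with $\Om_0^b \subset \Om' \subset \Om^*$, and such an $\Om'$ need not contain $\Om_0^a$, so it need not belong to your class, and its boundary area could be far smaller than $\Hn(\partial \Om^*)$. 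The best you can salvage by comparing with $\Om' \cup \Om_0^a$ is $\Hn(\partial \Om^*) \leq \Hn(\partial \Om') + \Hn(\G_0^a) + \varepsilon/8$, which is useless since $\Hn(\G_0^a)$ is not small. So as written, the second application of Lemma \ref{lem:split} is not justified.

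The fix is to drop $\Om_0^a$ from the sandwich entirely, as the paper does: minimize over $\mathcal{S} = \{\Om' : \Om_0^b \subset \Om' \subset \Om_1^a,\ \partial\Om' \text{ smooth}\}$. Then both competitor families are subfamilies of $\mathcal{S}$ (for (2), $\Om \subset \Om' \subset \Om_1^a$ implies $\Om_0^b \subset \Om'$; for (2)', $\Om_0^b \subset \Om' \subset \Om \subset \Om_1^a$), and both applications of Lemma \ref{lem:split} go through. Your worry that motivated adding $\Om_0^a$ is unfounded: Lemma \ref{lem:split}.I does not require $\Om_0^a \subset \Om^*$ — its conclusion already gives $\tilde{\Om}_0^a \subset \Om_0^a$, which is all Proposition \ref{gluing} asks for (the ``furthermore'' clause $\tilde{\G}_0 = \G_0$ is not needed here). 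Likewise the outward perturbation of $\Om_1^a$ is unnecessary, since $\Om_1^a$ itself (or a slight smoothing of a nearby regular level) is already an admissible competitor containing $\Om_0^b$, which also gives the bound $\Hn(\partial\Om^*) \leq W + \varepsilon/4$. The concatenation step at the end is fine.
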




\begin{proof}

	The idea for the proof
	is shown in Figure \ref*{fig:split}.

	\begin{figure} 
		\centering
		\includegraphics[scale=0.35]{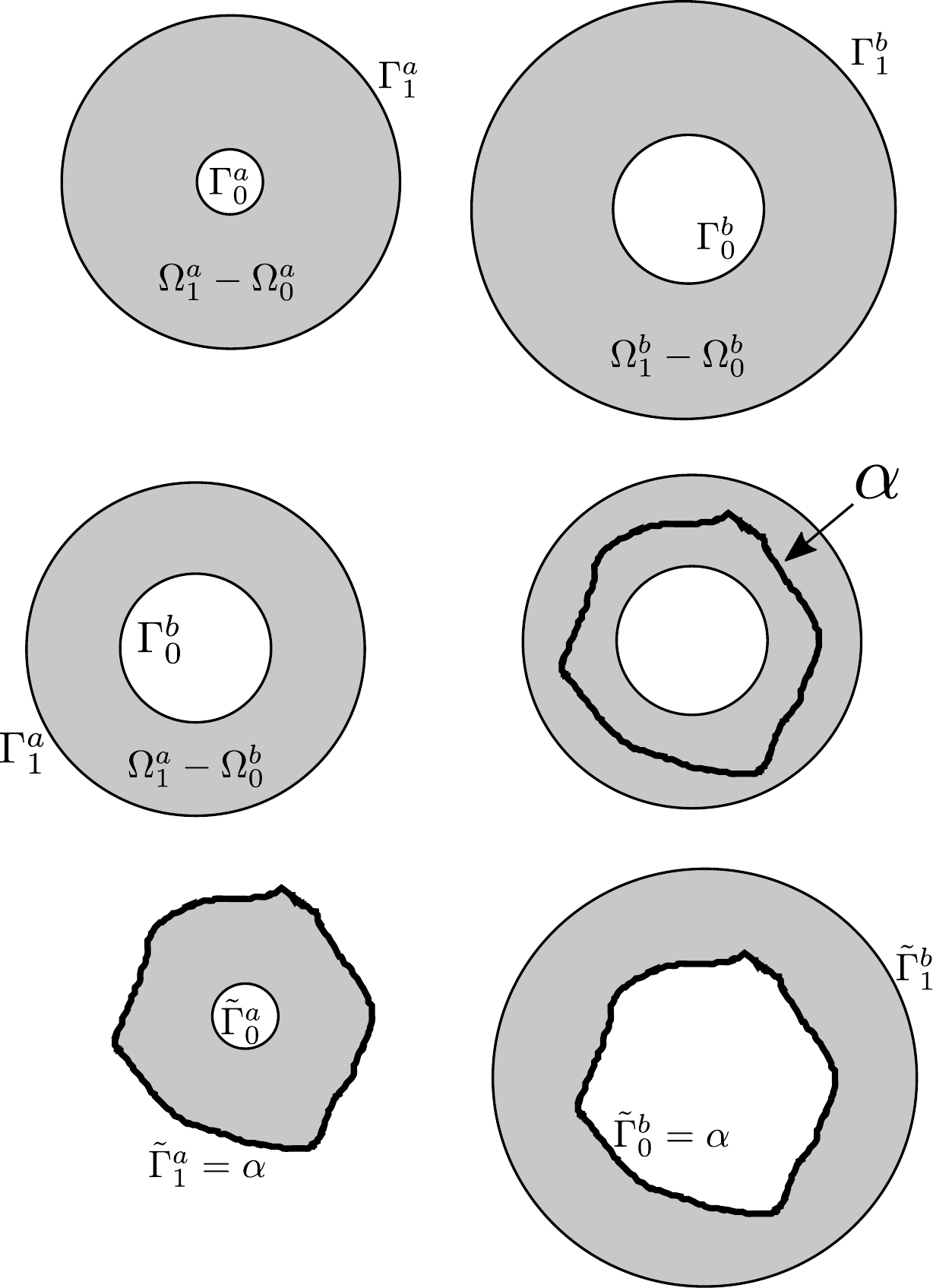}
		\caption{Gluing two nested sweepouts.}
		\label{fig:split}
	\end{figure}

Let $\mathcal{S}$ denote the collection of all
open sets $\Om'$, such that $\Om_0^b \subset \Om' \subset \Om_1 ^a$
and $\partial \Om'$ is smooth.
Let $A = \inf_{\Om' \in \mathcal{S}} \Hn(\partial \Om')$ and
choose $\Om \in \mathcal{S}$ with
and $\Hn(\partial \Om) < A + \varepsilon/4$. 
We set $\alpha = \partial \Om$.

We claim that $\Om$ and $\alpha$ satisfy properties (i) and (ii)
from Lemma \ref*{lem:split}(I) for $\Om_t = \Om_t^a$.
Indeed, if $\Om'$ satisfies $\Om \subset \Om' \subset  \Om_1 ^a$
then $\Om' \in \mathcal{S}$ and $\Hn(\partial \Om') <
\Hn(\alpha) + \varepsilon/4$.
By Lemma \ref*{lem:split}(I) there exists
a nested family $\{\tilde{\G}_t^a\}$
with the corresponding family of open sets $\{\tilde{\Om}_t^a\}$,
such that $\tilde{\Om}_0^a \subset \Om_0^a$, $\tilde{\G}_1^a = \alpha$ and
$\Hn(\tilde{\G}_t^a) \leq W+\varepsilon$.

We claim that $\Om$ and $\alpha$ also satisfy properties (i)' and (ii)'
from Lemma \ref*{lem:split}(II) for $\Om_t = \Om_t^b$.
Indeed, if there is an open set $\Om'$ with
$\Om_0^b \subset \Om' \subset \Om$ then again we have
$\Om' \in \mathcal{S}$ and inequality 
$\Hn(\partial \Om') <
\Hn(\alpha) + \varepsilon/4$ follows by definition of $\Om$.
By Lemma \ref*{lem:split}(II) there exists
a nested family $\{\tilde{\G}_t^b\}$
with the corresponding family of open sets $\{\tilde{\Om}_t^b\}$,
such that  $\Om_1^b \subset \tilde{\Om}_1^b$, $\tilde{\G}_0^b = \alpha$ and
$\Hn(\tilde{\G}_t^b) \leq W+\varepsilon$.

We define the desired nested family $\G_t$ 
simply by concatenating these two nested families.

\end{proof}

Now we are ready to complete the proof of
Proposition \ref*{prop:nested}.
We apply local monotonization
to define families $\{\G^i_t\}$ for $i=1,...,N-1$.

By (2.5) we have $\Om_0^2 \subset \Om_1^1$.
Hence, we can apply  Proposition \ref*{gluing}
to the nested families $\{\G_t^1\}$ and $\{\G_t^2\}$.
We obtain a new nested family $\G_t^{1,2}$
with the corresponding family of open sets
$\{ \Om^{1,2}_t \}$. By (2.3) and Proposition \ref*{gluing}
we have $\Om^{1,2}_0 \subset \Om_0^1 \subset \Om_0$ and
$\Om_{t_2} \subset \Om^2_1 \subset \Om^{1,2}_1$.
Using (2.5) again we have
$\Om^3_0 \subset \Om^{1,2}_1$. Hence, we can apply Proposition
\ref*{gluing} to $\{ \G^{1,2}_t \}$ and $\{ \G^3_t \}$.
We iterate this procedure. At the $i$-th step we
apply Proposition \ref*{gluing} to families
$\{ \G^{1,...,i}_t \}$ and $\{ \G^{i+1}_t \}$ to construct a new
nested family $\{ \G^{1,...,i,i+1}_t \}$ with
$\Om^{1,...,i}_0 \subset \Om_0$ and 
$\Om_1 \subset \Om^{1,...,i}_1$. 
Proposition \ref*{gluing} and (2.5) guarantee that 
$\Om^{i+2}_0 \subset \Om^{1,...,i}_1$, so we can go 
to the next step.

After performing this operation $N$ times we obtain 
the desired nested family. This finishes the 
proof of Theorem \ref*{prop:nested}.

\section{No escape to infinity} \label{sec: no escape}


In this section we prove Proposition \ref*{prop:intersection_U}, which we recall below.	

{\bf Proposition 2.1}
\textit{
For every good set $U$ there exists a positive constant $\varepsilon(U)$ which depends only on $U$ such that the following holds.
For every good sweepout $\{ \Gamma_t \}$ of $U$ with associated family of open sets $\{ \Om_t \}$, 
there is a surface $\Gamma_{t'}$ in the collection
which has area at least $W_g(U)$, and such that $\Hn( \Gamma_{t'} \cap cl(U) ) \geq \varepsilon(U)$.
}


The proof is by contradiction. 
We assume that Proposition \ref*{prop:intersection_U}
does not hold and construct a good sweepout
with volume of hypersurfaces strictly less
than $W_g(U)$. 
The main tool in the proof is Theorem 
\ref*{prop:nested}. 

%
%

Let $U$ be a good set. 

\begin{lemma} \label{epsilon}
There exit $\varepsilon(U)> 0$,
$\varepsilon_0(U)>0$ and $\varepsilon_1(U)>0$ such that 
for any open set $\Om'$
the following
holds:

    (1) $\max\{ \varepsilon, \varepsilon_1\} < 
    \Hn(\partial U)/10$.

    (2) If $\varepsilon_0 <\Hnn(\Om' \cap U)< 
    \Hnn(U) - \varepsilon_0$ then 
    $\Hn(\partial \Om' \cap U) > 2 \varepsilon$.
    
    (3) A) If $\Hnn(\Om' \cap U) < 2 \varepsilon_0$
    then there exists a family of open sets $\{ \Xi_t \}$
    with $\Xi_0 = \Om'$,
    $\Xi_t \setminus N_{\varepsilon_1} (U) = \Om' \setminus N_{\varepsilon_1} (U)$, $\Xi_1 \cap U = \emptyset$ and
    $\Hn(\partial \Xi_t)<  \Hn(\partial \Om')+ \Hn(\partial U) + \varepsilon_1$.
    
    B) If $\Hnn(\Om' \cap U) > \Hnn(U) - 2 \varepsilon_0$
    then there exists a family of open sets $\{ \Xi_t \}$
    with $\Xi_0 = \Om'$,
    $\Xi_t \setminus N_{\varepsilon_1} (U) = \Om' \setminus N_{\varepsilon_1} (U)$, $\Xi_1 \cap U=U$ and
    $\Hn(\partial \Xi_t)<  \Hn(\partial \Om')+ \Hn(\partial U)+ \varepsilon_1$.

\end{lemma}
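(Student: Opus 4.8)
The plan is to choose the three constants in the order $\varepsilon_1$, then $\varepsilon$, then $\varepsilon_0$, each time using only the geometry of the fixed good set $U$ and its isoperimetric profile. First I would produce $\varepsilon_1$ and the deformations of part (3). For a set $\Om'$ meeting $U$ in very small volume, the boundary $\partial \Om' \cap U$ bounds (inside $U$) a region of small volume, so by the isoperimetric inequality on the bounded set $U$ (or rather on a slightly larger collar neighbourhood $N_{\varepsilon_1}(U)$), one can contract $\Om' \cap N_{\varepsilon_1}(U)$ off of $U$ through open sets whose boundary area is controlled. Concretely: let $\rho(v)$ denote the isoperimetric profile of $N_{\varepsilon_1}(U)$, i.e. the infimal boundary area of a subset of volume $v$; since $\rho(v) \to 0$ as $v \to 0$, one can choose $\varepsilon_0$ small enough (after $\varepsilon_1$ is fixed) that whenever $\Hnn(\Om' \cap U) < 2\varepsilon_0$ we can first move $\Om'$ to a set agreeing with it outside $N_{\varepsilon_1}(U)$ and whose part inside $N_{\varepsilon_1}(U)$ is, say, a small metric neighbourhood of $\partial \Om' \setminus U$ (all of this raising the boundary area by at most $\varepsilon_1$), and then push that off $U$. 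The symmetric statement (3B) is obtained by applying (3A) to the complement $M \setminus \Om'$ inside $N_{\varepsilon_1}(U)$. Requirement (1), that $\varepsilon_1$ (and later $\varepsilon$) be $< \Hn(\partial U)/10$, is just an additional upper bound we impose when we pick these constants and costs nothing.

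Next I would fix $\varepsilon$. Here the relevant fact is the continuity and strict positivity of the isoperimetric profile of $U$ itself on the open interval $(0, \Hnn(U))$. Define
\[
\mathcal{I}_U(v) = \inf\{\Hn(\partial\Om' \cap U) : \Om' \subset M \text{ open},\ \Hnn(\Om' \cap U) = v\}.
\]
A standard argument (using that $U$ is bounded with smooth boundary, so it is non-collapsed at every scale in its interior, and a covering/compactness argument near $\partial U$) shows $\mathcal{I}_U(v) > 0$ for $v \in (0, \Hnn(U))$ and that $\mathcal{I}_U$ is continuous. Having already chosen $\varepsilon_0$, I would set
\[
2\varepsilon = \tfrac{1}{2}\min_{v \in [\varepsilon_0,\ \Hnn(U) - \varepsilon_0]} \mathcal{I}_U(v) > 0,
\]
possibly decreasing it further so that $\varepsilon < \Hn(\partial U)/10$. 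With this choice, (2) holds by definition: if $\varepsilon_0 < \Hnn(\Om' \cap U) < \Hnn(U) - \varepsilon_0$ then $\Hn(\partial\Om' \cap U) \geq \mathcal{I}_U(\Hnn(\Om' \cap U)) \geq 4\varepsilon > 2\varepsilon$. One subtlety: $\varepsilon$ depends on $\varepsilon_0$ through the interval $[\varepsilon_0, \Hnn(U) - \varepsilon_0]$, while $\varepsilon_0$ was chosen using $\varepsilon_1$ only — so the dependency order $\varepsilon_1 \to \varepsilon_0 \to \varepsilon$ is consistent and there is no circularity. If one instead wants $\varepsilon$ to control $\varepsilon_0$ (as the informal discussion in Section 2 suggests, "$\varepsilon(U) = \varepsilon(\varepsilon_0)$"), one simply reverses roles: pick $\varepsilon_0$ first as any small number, then $\varepsilon$ from the profile; the logic is symmetric.

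The main obstacle is the deformation statement (3), specifically making the boundary-area bound $\Hn(\partial \Xi_t) < \Hn(\partial\Om') + \varepsilon_1$ hold throughout a continuous family while also keeping $\Xi_t$ fixed outside the thin collar $N_{\varepsilon_1}(U)$. The naive "isoperimetric filling" gives a single comparison set of small boundary area, not a continuous path with the area staying just above $\Hn(\partial\Om')$ at every time. I would handle this exactly as elsewhere in the paper: use the nested-sweepout technology. Namely, the region $\Om' \cap N_{\varepsilon_1}(U)$ has small volume; by Theorem 4.10 (Falconer) it admits a Morse function with fibers of area $\lesssim \Hnn(\Om'\cap U)^{n/(n+1)}$, hence (choosing $\varepsilon_0$ small) fibers of area $< \varepsilon_1/2$; by Lemma 3.2 (and the extension Lemma 4.8) this produces a nested family sweeping $\Om' \cap N_{\varepsilon_1}(U)$ out of $U$, starting from a smoothing of $\partial(\Om'\cap N_{\varepsilon_1}(U))$, with all intermediate boundary areas at most $\Hn(\partial(\Om'\cap N_{\varepsilon_1}(U))) + \varepsilon_1/2 \leq \Hn(\partial\Om') + \varepsilon_1$. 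Reattaching the fixed part $\Om' \setminus N_{\varepsilon_1}(U)$ and smoothing the resulting corner (Section 2.3, observation 2, which costs arbitrarily little area) gives the family $\{\Xi_t\}$ with the required properties; part (3B) follows by the same construction applied to the complement. The verification that all the corner-smoothings and perturbations can be absorbed into the single budget $\varepsilon_1$ is routine given the tools already established.
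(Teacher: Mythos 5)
Your choices of $\varepsilon_1$, $\varepsilon_0$, $\varepsilon$ and your treatment of (2) via the isoperimetric profile of $U$ match the paper, but your argument for part (3) --- the real content of the lemma --- has a genuine gap. You propose to sweep out the region $\Om' \cap N_{\varepsilon_1}(U)$ by Falconer fibers and quote a fiber-area bound of order $\Hnn(\Om' \cap U)^{n/(n+1)}$. This conflates two different sets: the hypothesis only bounds $\Hnn(\Om' \cap U) < 2\varepsilon_0$, while the region you must clear and sweep lives in the collar $N_{\varepsilon_1}(U)$, where the volume of $\Om'$ is completely unconstrained (it is bounded only by the collar volume, which does not shrink with $\varepsilon_0$). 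So the Falconer bound does not give fibers of area $< \varepsilon_1/2$. Worse, to detach the swept region you must at some moment cut $\Om'$ along $\partial N_{\varepsilon_1}(U)$ (or along $\partial U$); the area of that interface inside $\Om'$ can be comparable to $\Hn(\partial U)$, and since (1) forces $\varepsilon_1 < \Hn(\partial U)/10$, this cannot be absorbed into the budget $\Hn(\partial \Om') + \varepsilon_1$. Finally, Theorem \ref{falconer} and Lemma \ref{lem:extension} require the swept region to sit inside a set bilipschitz-diffeomorphic to a subset of $\mathbb{R}^{n+1}$, which the whole collar $N_{\varepsilon_1}(U)$ need not be; these tools are only available locally.

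The paper's proof is built precisely to avoid these three problems, and it is not the single global sweep you describe. It covers $U$ by a fixed finite number $N$ of small balls $B_i$ (each $2$-bilipschitz to a Euclidean ball), uses the coarea formula to choose slightly perturbed radii so that each cutting sphere satisfies $\Hn(\partial B_i' \cap \Om') \leq 4\varepsilon_0/r_0$ --- this is what makes the interface area small, at the price of tying $\varepsilon_0$ to $r_0$ --- and then, ball by ball, replaces $\partial \Om'$ inside $B_i'$ by a $\delta$-minimizing filling (Lemma 4.6 of \cite{GL}) of $\partial(\Om' \cap \partial B_i')$ which avoids the concentric quarter-ball, with $\delta = \varepsilon_1/(10N)$; Lemma \ref{lem:extension} is applied only inside the single ball, where both the Euclidean chart and the small-volume hypothesis are actually available, and the permanent area increase per ball is $\varepsilon_1/(10N)$, so iterating over the $N$ balls clears all of $\bigcup \frac{1}{4}B_i \supset U$ within the total budget $\varepsilon_1$. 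Your proposal is missing exactly this localization mechanism (coarea-chosen small cuts plus minimizing fillings avoiding the ball centers), and without it the area bound $\Hn(\partial \Xi_t) < \Hn(\partial \Om') + \varepsilon_1$ cannot be obtained.
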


\begin{proof}
Pick any $\varepsilon_1 \in (0,\Hn(\partial U)/10)$.
We will show that for all sufficiently small $\varepsilon_0$
(with the choice of $\varepsilon_0$ depending on $\varepsilon_1$)
statement (3) holds; we will show that
for all sufficiently small 
$\varepsilon$ 
(with the choice of $\varepsilon$ depending on $\varepsilon_0$)
statement (2) holds.

Statement (2) follows from the properties of the isoperimetric profile
of $cl(U)$.

Now we will prove Statement (3) A).
Statement (3) B) follows by an analogous argument.

Let $r_0>0$ be sufficiently small, so that every 
ball $B$ of radius $r \in (0,r_0]$
centered at a point in $cl(U)$
is $2$-bilipschitz diffeomorphic
to a ball of the same radius in the Euclidean space.

Choose a covering $\{ B_i \}$ of $cl(U)$ by $N$ balls of radius $r_0$,
so that concentric balls of radius $\frac{r_0}{2}$,
denoted by $\frac{1}{2}B_i$, still cover $cl(U)$.
Set $\varepsilon_0 = \min\{ \frac{\varepsilon_1 r_0}{20 N}, 
( \frac{\varepsilon_1}{2^{n+2} C(n)})^{\frac{n+1}{n}}\}$,
where $C(n)$ is the constant from Lemma \ref*{lem:extension}.
Using coarea inequality we may choose a covering
$\{ B_i' \}$ of $U$ by $N$ balls of radius $r_i \in (r_0/2,r_0)$,
so that $\Hn((\partial B_i') \cap (\Om' \cap U)) \leq \frac{4 \varepsilon_0}{r_0}
\leq \frac{\varepsilon_1}{5 N}$.



We inductively push $\Om'$ outside of $B_1' \cap U, B_2' \cap U, ...,
B_N' \cap U$.  

By Lemma \ref*{lem:extension} there exists 
a nested family that starts on $\Om'$ and
ends on a smoothing of $\Om' \setminus (B_1' \cap U)$.
We can choose the smoothing so that the set does not intersect
$B_1' \cap U$.
In the process we have increased the area by 
at most $\Hn(B_1' \cap \partial U) + 
\Hn(\partial B_1'\cap (U \cap \Om')) + 2^n C(n) \varepsilon_0^{\frac{n}{n+1}}$.
By our choice of $\varepsilon_0$ we conclude that 
the area increased by at most $\Hn(B_1' \cap \partial U) + \frac{\varepsilon_1}{2N}$.



We iterate this procedure for each ball $B_i'$.
During the $i$-th step, $1 \leq i \leq N$, we have that 
the area of the hypersurface is bounded by
$\Hn(\partial \Om')+ \Hn(\partial U) + \frac{i \varepsilon_1}{2N}$.
This concludes the proof of  Statement (3) A).
\end{proof}


	


%

\emph{Proof of Proposition \ref*{prop:intersection_U}}.
Suppose Proposition \ref*{prop:intersection_U} does not
hold. 
Then there exists
a good sweepout $\{ \G_t \}_{t \in [0,1]}$,
such that
if $\Hn(\G_t) \geq W_g(U)$ then
$\Hn(\G_t \cap U) < \varepsilon(U)$.
Let $\{\Om_t \}$ denote the corresponding
family of open sets. 
Let $f(t) = \Hn(\G_t \cap U)$. Note that $f(t)$
may not be continuous. However, it is easy to see
that one can perturb the family $\{ \G_t \}$
so that it is roughly continuous in the following sense.

\begin{definition}
Function $f(t)$ is $\delta$-continuous if
the oscillation 

\noindent
$\omega_f(t) = \lim_{a \rightarrow 0}[ \sup_{s \in [t-a,t+a]} f(s) -
\inf_{s \in [t-a,t+a]} f(s)]$ satisfies $\omega_f(t) < \delta$ for every
$t$.
\end{definition}

\begin{lemma} \label{area_continuity}
Let $U$ be a bounded open set with smooth boundary
and $\{ \G_t \}$ be a good sweepout of $U$.
For every $\delta>0$ there exists a 
good sweepout
$\{ \G'_t \}$ of $U$, such that
$f(t) = \Hn(\G'_t \cap U)$ is $\delta$-continuous,
$\sup_t \Hn(\G'_t) \leq \sup_t \Hn(\G_t) + \delta$
and $\sup_t \Hn(\G'_t \cap U) \leq \sup_t \Hn(\G_t \cap U) + \delta$.
\end{lemma}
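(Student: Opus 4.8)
The plan is to make the discontinuities of $f(t)=\Hn(\G_t\cap U)$ small by "splitting" each bad time into a short monotone interval, using the same local replacement technology already built in Section 4. First I would observe where $f$ can fail to be continuous: by (s2)--(s3) the family $t\mapsto\G_t$ is continuous in the Hausdorff sense and $\Hn(\G_t)$ is continuous, so the only source of a jump in $f(t)=\Hn(\G_t\cap U)$ is that a positive-$\Hn$-measure piece of $\G_t$ can be tangent to $\partial U$ and slide in or out of $U$ instantaneously; equivalently, $\Hn(\G_t\cap\partial U)>0$. Since $\Hn(\G_t)\le\sup\Hn(\G_s)<\infty$ for all $t$ and $\Hn$ is (by (s2)) a smooth, hence bounded, function of $t$, the total "mass budget" is finite, so for any $\eta>0$ there are only finitely many times $t$ at which $\Hn(\G_t\cap\partial U)\ge\eta$; call these the bad times $\tau_1<\dots<\tau_m$ (choosing $\eta$ small depending on $\delta$).

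Next I would localize near each bad time. Fix $\tau_k$ and a small $a>0$. Using the smoothing-corners construction of Section 3.2 applied to the smooth manifold $\partial U$, replace the single parameter value $\tau_k$ by a short interval $[\tau_k-a,\tau_k+a]$ (after reparametrizing $[0,1]$, which costs nothing) carrying an interpolating family between $\G_{\tau_k-a}$ and $\G_{\tau_k+a}$ that is built so that, along it, the part of the moving hypersurface lying inside $U$ changes continuously: concretely, one perturbs $\G_t$ for $t$ near $\tau_k$ by pushing the portion of $\G_t$ lying within $N_r(\partial U)$ slightly off of $\partial U$ using a Morse function on the annular region $N_r(\partial U)$ transverse to $\partial U$ (Lemma 3.2 / Lemma 3.4 give exactly such a function with controlled fiber areas). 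This makes $\Hn(\cdot\cap\partial U)\equiv 0$ for the new hypersurfaces near $\tau_k$, and by the area estimates in Section 3.2 (observation 2) the perturbation increases $\sup_t\Hn(\G_t)$ and $\sup_t\Hn(\G_t\cap U)$ by at most $\delta/(2m)$; since the new family agrees with the old outside a small tubular neighbourhood of $\partial U$, the $\Om_t$ can be adjusted correspondingly and properties (sw1)--(sw3) and (sw$_g$) are preserved (for (sw$_g$) we only touch $\G_0,\G_1$ by an arbitrarily small amount, so their areas stay $<\Cgs$). Doing this at all $m$ bad times and summing errors yields the required $\delta$-continuity: at the remaining times $\omega_f(t)$ is controlled by the Hausdorff-continuity of $\G_t$ together with $\Hn(\G_t\cap\partial U)<\eta$, which one checks forces $\omega_f(t)<\delta$.

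The main obstacle, and the step I would spend the most care on, is the last assertion: that after removing the finitely many "large" tangencies, $f$ is genuinely $\delta$-continuous at \emph{every} point — one must rule out an accumulation of small jumps. The argument is that $\omega_f(t)$ is bounded above by $\limsup_{s\to t}\big[\Hn(\G_s\triangle\G_t\text{ near }\partial U)\big]$ plus a term controlled by $\Hn(\G_t\cap\partial U)$, and the first term tends to $0$ by smooth convergence away from the finite singular set $P_t$ (s3) combined with uniform finiteness of $\Hn$; the second is $<\eta<\delta$ by construction. Making this rigorous requires a short covering argument near $\partial U$ together with the coarea formula to control how much area of $\G_s$ can concentrate in $N_r(\partial U)$ as $r\to 0$, uniformly in $s$ near $t$ — this is where I expect the technical work to lie, but it is entirely parallel to the covering/coarea estimates already used in Lemma 5.11 and Lemma 4.6.
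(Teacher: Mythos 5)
Your opening observation is essentially right: by (s2)--(s3), $f(t)=\Hn(\G_t\cap U)$ can only fail to be continuous at a time $t_0$ where $\Hn(\G_{t_0}\cap\partial U)>0$ (no area can concentrate at the finitely many points of $P_{t_0}$, and $U$ is bounded). The genuine gap is the next step, the reduction to finitely many bad times. The bound $\Hn(\G_t)\le\sup_s\Hn(\G_s)<\infty$ is a bound for each slice separately; there is no shared ``mass budget'' across different parameter values, and nothing forces the sets $\G_t\cap\partial U$ for different $t$ to be disjoint subsets of $\partial U$. Consequently the set $\{t:\Hn(\G_t\cap\partial U)\ge\eta\}$ need not be finite: a fixed piece of $\partial U$ of positive area may be contained in $\G_t$ for every $t$ in a whole interval, or tangential crossings of $\partial U$ may occur at a sequence of times accumulating at some $t_\infty$, all with uniformly bounded slice areas. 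Since your entire scheme consists of local repairs at finitely many isolated times $\tau_1<\dots<\tau_m$, it collapses at this point, and the bad set being closed does not help, since closed subsets of $[0,1]$ can be infinite.

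There is a second problem even at a single bad time: pushing the portion of $\G_t$ lying in $N_r(\partial U)$ slightly off of $\partial U$ does not remove the jump. A sheet of area $\ge\delta$ that crosses $\partial U$ at time $\tau_k$ must still pass from outside $U$ to inside at some moment after your perturbation; unless the crossing is spread out in the parameter (moved across $\partial U$ piece by piece), $f$ still jumps by roughly the area of that sheet. You are correct that if one could arrange $\Hn(\G'_t\cap\partial U)=0$ for \emph{all} $t$ simultaneously while preserving (s1)--(s3), then continuity of $f$ would follow from your own limiting argument; but achieving this for a continuum of slices at small area cost is essentially the original difficulty, and a generic-isotopy argument only gives it for almost every $t$, not every $t$. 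The paper's proof avoids all of this: it simply re-runs the construction of Lemma \ref{continuity in mass norm} (Step 1 of the proof of Proposition \ref{prop:nested}), which replaces the family by one that is continuous in the mass norm, i.e.\ between nearby parameter values the hypersurfaces differ only inside balls whose area contribution is uniformly small. The area inside \emph{any} fixed open set, in particular inside $U$, then oscillates by less than $\delta$, and both $\sup_t\Hn(\G'_t)$ and $\sup_t\Hn(\G'_t\cap U)$ increase by at most $\delta$. If you want to salvage your approach, aim for that kind of uniform ball-by-ball control of localized areas over the whole parameter interval rather than surgery at isolated tangency times.
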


\begin{proof}
This follows from the construction in the proof
of Lemma \ref*{continuity in mass norm}.
\end{proof}

Hence, without any loss of generality we may assume that 
sweepout $\{ \G_t \}$ satisfies the conclusions of Lemma \ref*{area_continuity}
for $\delta < \varepsilon/10$ and that 
for all $\G_t$ with $\Hn(\G_t) \geq W_g(U)$ we have
$\Hn(\G_t \cap U) < 1.1 \varepsilon(U)$.

Let $g: [0,1] \rightarrow
[0, \Hnn(U)]$ be defined as
$g(t)= \Hnn(U \cap \Om_t)$.
Function $g(t)$ is continuous.
By Lemma \ref*{epsilon} (2) each connected component $I'$ of
$g^{-1}([\varepsilon_0, \Hnn(U) - \varepsilon_0])$
is contained in some interval $I=[t_0,t_1] \subset [0,1]$, such that
$f(t) \geq \frac{3}{2} \varepsilon$ for all $t \in I$.
Moreover, by Lemma \ref*{area_continuity}
we may assume that $\varepsilon \leq f(t_i) \leq 2 \varepsilon$, $i=0,1$.
By continuity of $g(t)$ and since $\{ \G_t \}$
is a sweepout there exists an interval $I$ as above with
$\Hnn(\Om_{t_0} \cap U) \leq \varepsilon_0$
and $\Hnn(\Om_{t_1} \cap U) \geq \Hnn(U)- \varepsilon_0$.




By construction
 we have that
$\Hn(\G_t) < W_g(U) - \delta$ for some $\delta>0$ and
for all $t \in I$.

We would like to turn $\{\G_t\}$ into
a good sweepout of $U$, while retaining 
an upper bound on the volume below $W_g(U)$.
The family $\{\G_t\}_{t \in I}$ fails 
to be a good sweepout of $U$ for two reasons:

1. $\Om_{t_0} \cap U$ 
and $\Om_{t_1} \setminus U$ are not empty; 

2. $\Hn(\G_{t_0})$ and $\Hn(\G_{t_1})$
may be larger than $\Cgs$. In fact, they
may be as large as the largest hypersurface
in $\{\G_t\}_{t \in I}$.

To address the first problem we note
that $\Om_{t_0} \cap U$ 
and $\Om_{t_1} \setminus U$
have volume at most $\varepsilon_0$
and we may use Lemma \ref*{epsilon}
to homotope $\G_{t_0}$ and $\G_{t_1}$
outside of $U$ while increasing the
$\Hn-$measure of the hypersurfaces
by a controlled amount. Observe, however,
that if $\delta$ is much smaller than $\varepsilon$
and $\Hn(\G_{t_i})$ is almost equal to 
$W_g(U) - \delta$ then the resulting family
will have volume larger than $W_g(U)$.
The second problem seems even more substantial.

The main tool to resolve these two problems
is to replace $\{\G_t\}_{t \in I}$ with 
a nested family. This allows
us to define certain two nearly area 
minimizing hypersurfaces. We then 
modify the nested family so that it starts and
ends on these two hypersurfaces, 
which have small area and can be ``homotoped"
away from $U$ to produce a good sweepout.

We apply Proposition \ref*{prop:nested}
to construct a nested family $\{\bar{\G}_t \}$, $t \in [0,1]$,
such that $\Hn(\bar{\G}_t) < W_g(U) - \frac{\delta}{2}$,
$ \bar{\Om}_0 \subset \Om_{t_0}$ and
$\Om_{t_1} \subset \bar{\Om}_1$.

\begin{figure} 
   \centering
\includegraphics[scale=0.5]{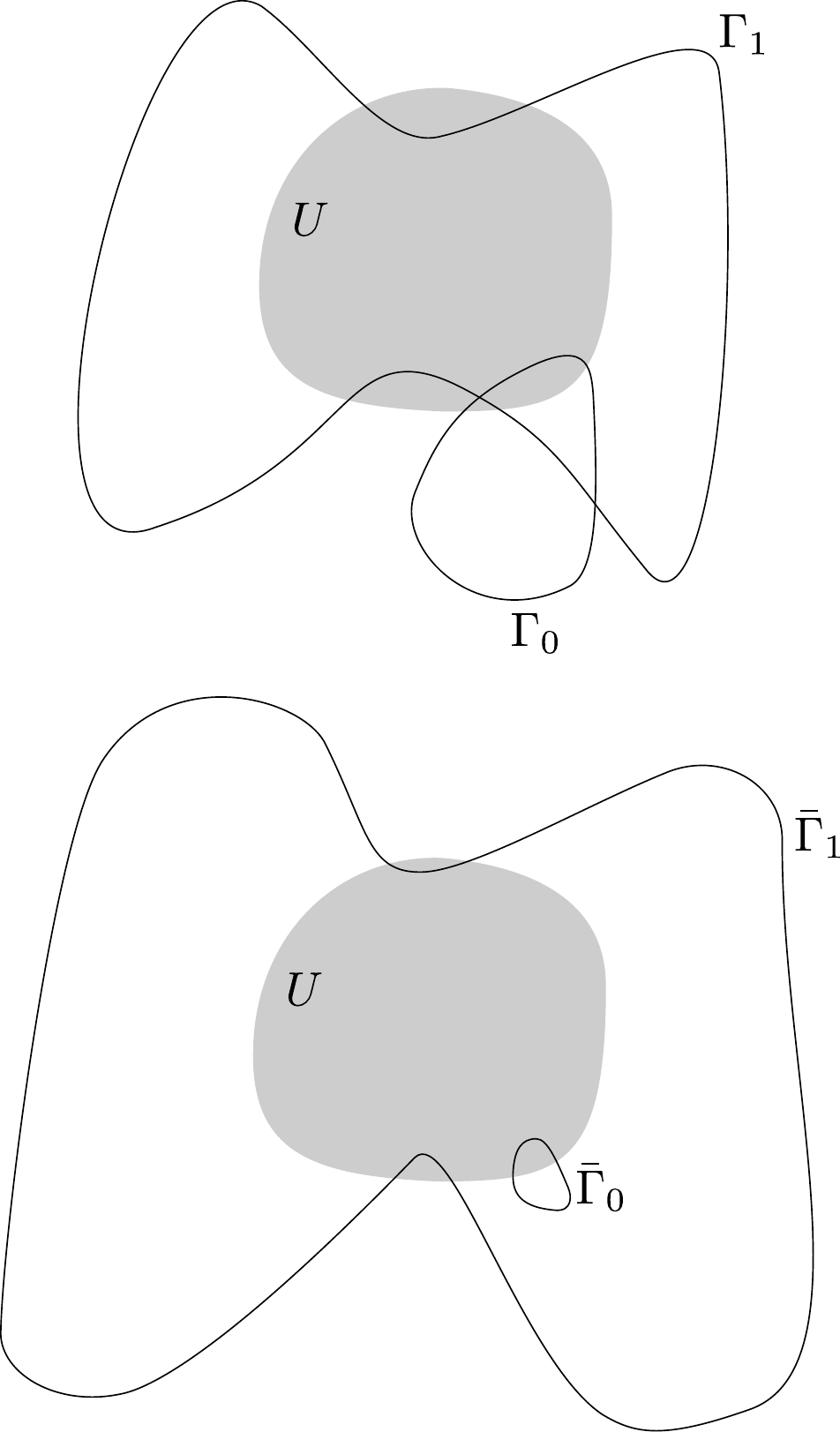}
\caption{Replacing family $\{\G_t\}_{t \in I}$ with a nested family $\{\bar{\G}_t \}$}
\label{replacement}
\end{figure}

\begin{figure} 
   \centering
\includegraphics[scale=0.5]{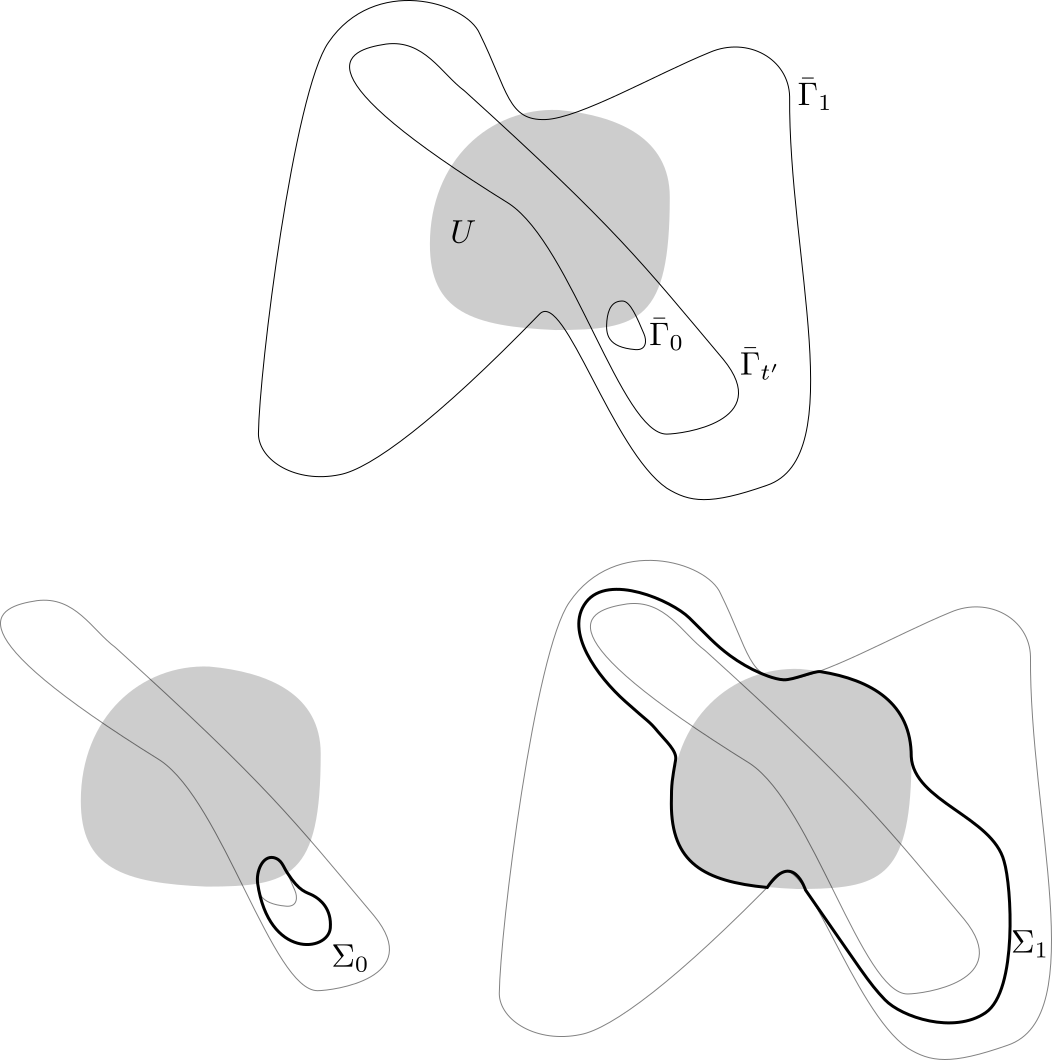}
\caption{Constructing a good sweepout in the proof of Proposition \ref*{prop:intersection_U}.}
\label{fig:final}
\end{figure}


The situation is depicted on Figure \ref*{replacement}.
Let $P = (\Om_{t_0} \cap U)
\cup (U \setminus cl(\Om_{t_1}))$. Let
$\bar{U}$ be an inward $\delta$-perturbation 
$U \setminus cl(P)$.

We see that $\bar{U}$ is contained in $U$
and up to a controllable error has the same volume and boundary area.
We summarize important properties of $\bar{U}$:

(i) $\{ \bar{\G}_t \}$ is a nested sweepout of
$\bar{U}$;

(ii) $\Hn(\partial \bar{U}) \leq \Hn(\partial U) + 4 \varepsilon + \delta \leq 2 \Hn(\partial U)$;

(iii) There exists a homotopy pushing out $\partial \bar{U}$
outside of $U$ through hypersurfaces of area at most $3 \Hn(\partial U)$.

Property (iii) follows from Lemma \ref*{epsilon} (B).


%
%

\begin{lemma} \label{long slice}
There exists $t' \in [0,1]$,
such that
$\Hn(\bar{\G}_{t'} \setminus \bar{U}) \leq 
2 \Hn( \partial U)$.
\end{lemma}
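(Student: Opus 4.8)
The plan is to read this off from the chain of width identities \ref{w-wrel}--\ref{w-wg}, exactly as in the heuristic argument of the ``good width equals width'' subsection. The key observation is that the restrictions $\bar{\G}_t \cap cl(U)$ form (after a small modification) a relative sweepout of $U$, so that $\sup_t \Hn(\bar{\G}_t \cap cl(U)) \geq W_{\partial}(U)$. Combining this with $W_{\partial}(U) \geq W_n(U) - \Hn(\partial U) = W(U) - \Hn(\partial U) = W_g(U) - \Hn(\partial U)$ (using \ref{w-wrel}, \ref{w-wn}, \ref{w-wg}, all valid since $U$ is a good set), there is a slice $\bar{\G}_{t'}$ with $\Hn(\bar{\G}_{t'} \cap cl(U))$ nearly as large as $W_g(U) - \Hn(\partial U)$; subtracting this from the a priori bound $\Hn(\bar{\G}_{t'}) < W_g(U) - \delta/2$ leaves $\Hn(\bar{\G}_{t'} \setminus U) < \Hn(\partial U) + (\text{small error})$, which is below $2\Hn(\partial U)$.

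To carry this out I would first upgrade $\{\bar{\G}_t\}$ to an honest nested sweepout of $U$. Since $\bar{\Om}_0 \subset \Om_{t_0}$ we have $\Hnn(\bar{\Om}_0 \cap U) \leq \varepsilon_0 < 2\varepsilon_0$, so Lemma \ref{epsilon}(3)A provides a family supported in $N_{\varepsilon_1}(U)$, with boundary areas at most $\Hn(\bar{\G}_0) + \varepsilon_1 < W_g(U) - \delta/2 + \varepsilon_1$, pushing $\bar{\Om}_0$ off $U$; symmetrically, since $\Hnn(\bar{\Om}_1 \cap U) \geq \Hnn(U) - \varepsilon_0 > \Hnn(U) - 2\varepsilon_0$, Lemma \ref{epsilon}(3)B pushes $\bar{\Om}_1$ to engulf $U$. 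The construction in the proof of Lemma \ref{epsilon} proceeds one ball at a time and locally invokes Lemma \ref{lem:extension}, so (after corner-smoothing and an application of the gluing lemmas of Section 3) these prepended and appended pieces may be taken nested and concatenated with $\{\bar{\G}_t\}$ to give a nested sweepout $\{\G^*_s\}$ of $U$ with $\Hn(\G^*_s) < W_g(U) - \delta/2 + \varepsilon_1$ for all $s$. Taking traces on $cl(U)$ produces a relative sweepout, hence $\sup_s \Hn(\G^*_s \cap cl(U)) \geq W_{\partial}(U)$. Choosing $s$ so that $\Hn(\G^*_s \cap cl(U)) \geq W_{\partial}(U) - \eta$, and using that $\Hn(\G^*_s \cap \partial U) = 0$ for a.e.\ $s$ (the leaves being level sets of a Morse function, with $\partial U$ a fixed hypersurface, or after an arbitrarily small perturbation), I obtain
\[
\Hn(\G^*_s \setminus U) = \Hn(\G^*_s) - \Hn(\G^*_s \cap cl(U)) < \left( W_g(U) - \tfrac{\delta}{2} + \varepsilon_1 \right) - \left( W(U) - \Hn(\partial U) - \eta \right) = \Hn(\partial U) + \varepsilon_1 - \tfrac{\delta}{2} + \eta,
\]
which is $< 2\Hn(\partial U)$ since $\varepsilon_1 < \Hn(\partial U)/10$; reparametrizing $\{\G^*_s\}$ on $[0,1]$ yields the desired $t'$.

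The point requiring the most care is the interplay between $\delta$ and the repair error $\varepsilon_1$: because $\delta$ can be arbitrarily small I cannot guarantee that the maximal-trace slice lies in the original range rather than among the appended or prepended leaves. This is exactly why the statement allows the slack factor $2$ and why the repairs are constrained (via Lemma \ref{epsilon}) to raise areas by at most $\varepsilon_1 < \Hn(\partial U)/10$ and to be supported in $N_{\varepsilon_1}(U)$: with those bounds the displayed inequality holds for any leaf $\G^*_s$ realizing a near-maximal trace, whether it comes from the middle or from a repair piece. A secondary, routine point is checking that the concatenated family genuinely satisfies (s1)--(s3), (sw1)--(sw3) and (sw$_n$) -- i.e.\ that it is the level-set family of a single Morse function on $M$ -- which follows from the gluing constructions of Section 3 together with the smoothing remarks of Section 2, so I would only sketch it.
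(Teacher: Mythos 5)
Your route is genuinely different from the paper's: you lower-bound the maximal trace on $cl(U)$ via the chain $W_{\partial}(U)\ge W_n(U)-\Hn(\partial U)=W(U)-\Hn(\partial U)=W_g(U)-\Hn(\partial U)$, whereas the paper never invokes (\ref{w-wrel})--(\ref{w-wg}) here. Instead it sets $L=\max_t \Hn(\bar{\G}_t\cap U)$, observes that $\{\bar{\G}_t\}$ is a nested sweepout of $\bar{U}$ (an inward perturbation of $U\setminus cl(P)$), caps the traces with pieces of $\partial\bar U$ via Lemma \ref{morse} to get a nested sweepout of $\bar U$ by hypersurfaces of area at most $L+\Hn(\partial\bar U)+\delta$, extends it by Lemma \ref{epsilon} to a good sweepout of $U$ with maximum at most $L+2\Hn(\partial U)$, and then uses only the definition of $W_g(U)$ to conclude $L\ge W_g(U)-2\Hn(\partial U)$, hence the claim for a $t'$ nearly realizing $L$. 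The crucial feature of that construction is that \emph{every} leaf of the comparison sweepout has area tied to a trace of an \emph{original} leaf plus $\Hn(\partial\bar U)$, so the lower bound transfers directly to $L$.

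This is exactly where your argument has a genuine gap, and you half-notice it but resolve it incorrectly. Your lower bound $\sup_s\Hn(\G^*_s\cap cl(U))\ge W_{\partial}(U)$ applies to the \emph{repaired} family $\{\G^*_s\}$, and the near-maximal-trace leaf may well be one of the prepended or appended repair leaves produced by Lemma \ref{epsilon}(3) from $\bar\G_0$ or $\bar\G_1$. Nothing prevents this: $\bar\Om_0\cap U$ has small volume but $\Hn(\bar\G_0\cap U)$ and the traces of the intermediate repair hypersurfaces are not controlled by anything except the total area bound $\Hn(\bar\G_0)+\varepsilon_1$, which can be close to $W_g(U)$. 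If the large-trace leaf is a repair leaf, your displayed inequality controls $\Hn(\G^*_s\setminus U)$ for that leaf only; it coincides with $\bar\G_0$ merely outside $N_{\varepsilon_1}(U)$, and $\Hn(\bar\G_0\cap(N_{\varepsilon_1}(U)\setminus U))$ is uncontrolled, so no bound on $\Hn(\bar\G_{t'}\setminus U)$ for any $t'$ follows. Saying ``reparametrize $\{\G^*_s\}$'' proves a statement about a different family, not the lemma, whose conclusion concerns $\bar\G_{t'}$ itself (and is used downstream through $\bar\Om_{t'}$, sandwiched between $\bar\Om_0$ and $\bar\Om_1$, to define $\mathcal{U}_0,\mathcal{U}_1$ and apply Lemma \ref{lem:split}; a repair leaf cannot play that role). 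A secondary, smaller issue: to quote $W_{\partial}(U)$ you need the repaired family to be a genuine nested sweepout of $U$ (level sets of one Morse function), and Lemma \ref{epsilon}(3) does not produce nested Morse families, so more than the smoothing remarks is needed there. The fix is essentially to argue as the paper does: compare against a sweepout built from the traces $\bar\G_t\cap\bar U$ capped by portions of $\partial\bar U$, so that the quantity being bounded from below is $\max_t\Hn(\bar\G_t\cap U)$ itself rather than the maximal trace of an enlarged family.
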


\begin{proof}

Let $L = \max_t \{\Hn(\bar{\G}_{t} \cap \bar{U}) \}$.
Since $\{ \bar{\G}_t \}$ is a nested sweepout of
$\bar{U}$
we can apply Lemma \ref*{morse} to obtain
nested sweepout of $\bar{U}$ by hypersurfaces
of area at most 

\begin{align*} 
L + \Hn(\partial \bar{U}) + \delta & \leq L + \Hn(\partial U) + 4 \varepsilon + 2 \delta \\
    & \leq L +  2 \Hn( \partial U)
\end{align*}

Moreover, this sweepout starts on a hypersurface of
area $0$ and ends on $\partial \bar{U}$.
By (iii) we can deform
$\partial \bar{U}$ outside of $U$ through hypersurfaces
of controlled area. 



We have produced a
good sweepout of $U$ with maximal volume of the hypersurface at most
$L +  2 \Hn( \partial U)$. 
By definition of $W_g(U)$ we have $L +  2 \Hn( \partial U) \leq W_g(U)$.
Hence, 
$\Hn(\bar{\G}_{t'}) < W_g(U)$
implies that for some $t' \in [0,1]$ we have $\Hn(\bar{\G}_{t'} \setminus \bar{U}) 
<2 \Hn( \partial U)$.

\end{proof}


We will construct a sweepout of
$\bar{U}$
with hypersurfaces of area at most
$W_{g}(U) - \delta$, starting and ending on hypersurfaces of area
less than $3 \Hn(\partial U)$.
By Lemma \ref*{epsilon} we can deform it into
a good sweepout of $U$
by hypersurfaces of area at most $W_{g}(U) - \delta/4$.
This contradicts the definition of $W_{g}(U)$ 
and so Proposition \ref*{prop:intersection_U} follows.

To construct a sweepout of $\bar{U}$
with these properties we proceed as follows.
Let $t'$ be as in Lemma \ref*{long slice}, and
let $\mathcal{U}_0$ denote the collection
of all open sets $\Om$ with smooth boundary,
such that $\bar{\Om}_0 \subset \Om \subset \bar{\Om}_{t'} \setminus \bar{U}$.
Let $\mathcal{U}_1$ denote the collection
of all open sets $\Om$ with smooth boundary,
such that $\bar{\Om}_{t'} \cup \bar{U} \subset \Om \subset \bar{\Om}_1$.
Let $A_i = \inf \{\Hn(\partial \Om): 
\Om \in \mathcal{U}_i\}$. 
Observe that a perturbation of $\bar{\Om}_{t'} \setminus cl(\bar{U})$
 is an element of $ \mathcal{U}_0$ and 
a perturbation of $\bar{\Om}_{t'} \cup \bar{U}$ is an element of
$\mathcal{U}_1$. By Lemma \ref*{long slice}
the boundary areas of these hypersurfaces are at most $3 \Hn(\partial U)$.
We conclude that
$A_i \leq 3 \Hn(\partial U)$.
Let $\Si_0 = \partial \Xi_0$  and $\Si_1 = \partial \Xi_1$
be two hypersurfaces with 
$\Xi_i \in \mathcal{U}_i$ and
$\Hn(\Si_i) \leq A_i + \delta/4$.  We have that $\Xi_0$ is contained in $\bar{\Om}_{t'}$,
and that $\bar{U}$ is contained in its complement, and we also have that $\Xi_1$
contains both $\bar{U}$ and $\bar{\Om}_{t'}$.  In particular, the set $\Xi_1 \setminus \Xi_0$
contains $\bar{U}$.

We apply Lemma \ref*{lem:split} I
to construct a nested sweepout of $\bar{U}$ that starts on $\Si_0$ and ends on $\bar{\Om}_1$ and is composed of
hypersurfaces of area at most $W_{g}(U) - 3 \delta/4$.  Here we are using the fact that $\Xi_0$ is contained in
$\bar{\Om}_1$.  We then apply Lemma \ref*{lem:split} II to this sweepout to produce a nested sweepout of $\bar{U}$
that starts on $\Si_0$ and ends on $\Si_1$ and is composed of hypersurfaces of area at most 
$W_{g}(U) - \delta/4$.  Here we are using the fact that $\Xi_0 \subset \Xi_1$.
This finishes the proof of Proposition \ref*{prop:intersection_U}.  This proof is shown
in Figure \ref*{fig:final}.

\section{Convergence of a min-max sequence to 
a minimal hypersurface}
\label{sec: convergence}

\subsection{Manifolds with sublinear volume growth}
In this section we prove Theorem \ref*{main} and Corollary \ref*{main'}.
Corollary \ref*{main'} follows from the following lemma.
We show that if $M$ has sublinear volume growth
(in particular, if it has finite volume) then it contains 
a good set.

\begin{lemma}
Let $M^{n+1}$ be a complete non-compact manifold
with sublinear volume growth.
There exists a good set $U \subset M$, such that
$0< W_g(U) < \infty$.
\end{lemma}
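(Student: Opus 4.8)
The plan is to exhibit a concrete good set by taking a large metric ball and modifying its boundary so that it becomes smooth and has controlled $n$-dimensional volume. First I would fix a basepoint $x \in M$ realizing the sublinear growth condition, so that $\liminf_{r \to \infty} \mathrm{Vol}(B_r(x))/r = 0$. By the coarea formula, $\int_0^\infty \mathcal{H}^n(\partial B_r(x))\, dr = \mathrm{Vol}(M) < \infty$ (in the finite volume case) or, more generally, for each $R$ one has $\int_R^{2R} \mathcal{H}^n(\partial B_r(x))\, dr \leq \mathrm{Vol}(B_{2R}(x))$, so that along a subsequence $R_j \to \infty$ there exist radii $r_j \in (R_j, 2R_j)$ with $\mathcal{H}^n(\partial B_{r_j}(x)) \leq \mathrm{Vol}(B_{2R_j}(x))/R_j \to 0$ by sublinearity. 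Choosing $r_j$ to be a regular value of the distance function $d(x, \cdot)$ (possible by Sard's theorem, after an arbitrarily small smoothing of the distance function to make it Morse, as used elsewhere in the paper), the set $U_j = B_{r_j}(x)$ is a bounded open set with smooth boundary and $\mathcal{H}^n(\partial U_j)$ as small as we like.

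Next I would bound $W_\partial(U_j)$ from below, uniformly enough to beat $10\,\mathcal{H}^n(\partial U_j)$. The key point is that the relative width of a ball cannot be too small: any relative sweepout of $U_j$ is obtained (by definition) from a nested sweepout $\{\Gamma_t\}$ of $U_j$ by intersecting with $cl(U_j)$, and a nested sweepout is given by level sets of a Morse function $f$ with $\Gamma_0 \cap U_j = \emptyset$, $U_j \subset \Omega_1$. Since the family sweeps out $U_j$, there must be some fiber $\Sigma_t = \Gamma_t \cap cl(U_j)$ that separates $U_j$ into two pieces each of substantial volume — concretely, choosing $t$ with $\mathcal{H}^{n+1}(\Omega_t \cap U_j) = \tfrac{1}{2}\mathcal{H}^{n+1}(U_j)$ — and the isoperimetric inequality inside $cl(U_j)$ (or simply inside a fixed compact region of $M$ containing all the relevant sets, using a lower bound on the isoperimetric profile of a Riemannian manifold, which is positive) forces $\mathcal{H}^n(\Sigma_t) \geq c(U_j) > 0$. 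This gives $W_\partial(U_j) > 0$. To make $U_j$ a good set I need $W_\partial(U_j) \geq 10\,\mathcal{H}^n(\partial U_j)$; since $\mathcal{H}^n(\partial U_j) \to 0$, it suffices to show $W_\partial(U_j)$ does not go to zero as fast, or is bounded below. In fact, because $\mathrm{Vol}(U_j) \to \mathrm{Vol}(M)$ (or just: $\mathrm{Vol}(U_j)$ is eventually larger than $\mathrm{Vol}(B_1(x)) > 0$), and the isoperimetric profile of a large ball at its half-volume is controlled below by that of a fixed ball $B_1(x)$ whenever $U_j \supset B_1(x)$ and $\mathrm{Vol}(U_j) \geq 2\,\mathrm{Vol}(B_1(x))$, we get $W_\partial(U_j) \geq c_0 > 0$ for a constant $c_0$ independent of $j$. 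Then for $j$ large enough, $10\,\mathcal{H}^n(\partial U_j) < c_0 \leq W_\partial(U_j)$, so $U = U_j$ is a good set.

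Finally I would check $0 < W_g(U) < \infty$. By equation \eqref{w-wg}, $W_g(U) = W(U)$, and by \eqref{w-wn} and \eqref{w-wrel}, $W(U) = W_n(U) \geq W_\partial(U) > 0$, giving the lower bound. For the upper bound, I would produce an explicit good sweepout of $U$ of finite maximal area: apply Falconer's theorem (Theorem \ref{falconer}) to an inward perturbation of $U$, transported to $\mathbb{R}^{n+1}$ via a bilipschitz chart of a compact neighborhood, to get a Morse function on $U$ with all fibers of area bounded by a constant depending only on $\mathrm{Vol}(U)$ and the bilipschitz constant; then use Lemma \ref{morse} and Lemma \ref{epsilon} to extend this to an honest good sweepout of $U$ (starting on the empty set and ending on a set containing $U$), with maximal area finite. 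Hence $W_g(U) < \infty$. The main obstacle is the uniform lower bound on $W_\partial(U_j)$: one must argue carefully that the relative width of the growing balls does not decay, which I handle by comparing their isoperimetric profiles at half-volume to that of a fixed small ball sitting inside all of them, so the constant $c_0$ is genuinely independent of $j$.
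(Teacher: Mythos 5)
Your construction of the candidate sets (large balls $B_{r_j}(x)$ with $\Hn(\partial B_{r_j}(x))\to 0$ via the coarea formula plus a smoothing of the distance function) matches the paper, but the step you yourself flag as the main obstacle --- the lower bound on $W_{\partial}(U_j)$ that beats $10\,\Hn(\partial U_j)$ --- contains a genuine gap. You pass to the time $t$ with $\Hnn(\Om_t\cap U_j)=\tfrac12\Hnn(U_j)$ and then assert that the isoperimetric profile of $U_j$ \emph{at half of its own volume} is bounded below by a constant coming from a fixed ball $B_1(x)\subset U_j$. That comparison is false: a hypersurface splitting $U_j$ into two equal-volume pieces need not come anywhere near $B_1(x)$. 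For instance, if $M$ has finite volume and consists of two unit-volume bulbs joined by a tube of tiny cross-section (plus a thin end going to infinity), then for large $j$ the ball $U_j$ contains both bulbs and is cut into two equal halves by a surface sitting inside the tube, of area no larger than the tube's cross-section --- no matter how nice the unit ball in one of the bulbs is. So your $c(U_j)$ can go to zero much faster than $\Hn(\partial U_j)$, and no particular $U_j$ is certified to be good; this ``slice hiding in a thin part'' is exactly the phenomenon the rest of the paper has to fight. The paper's proof of the lemma avoids the uniformity question entirely by localizing to a \emph{fixed small} ball: fix $B_r(x)$ and let $C_I>0$ be the least area of a hypersurface dividing $B_r(x)$ into two equal-volume parts; any Morse function on $B_R(x)$ (equivalently, any relative sweepout) also sweeps out $B_r(x)$, so at the moment its sublevel set contains exactly half of $\Hnn(B_r(x))$ the fiber meets $B_r(x)$ in area at least $C_I$, whence $W_{\partial}(B_R(x))\geq C_I$ for \emph{every} $R>r$; one then simply chooses $R$ with $\Hn(\partial B_R(x))<C_I/100$. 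Your argument can be repaired by making the same move: halve the volume of the fixed small ball, not of $U_j$.

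A secondary issue: for the upper bound $W_g(U)<\infty$ you invoke Falconer's theorem through a bilipschitz chart of a compact neighbourhood of $U$ into $\R^{n+1}$; such a chart need not exist, since a large ball in $M$ can carry topology that does not embed in $\R^{n+1}$. The finiteness is much cheaper and is how the paper concludes: the level sets of the smoothed distance function $\tilde d_x$ furnish a good sweepout of $B_R(x)$ (starting near a point, ending on $\partial B_R(x)$, with fiber areas varying continuously over a compact interval), so $W_g(B_R(x))<\infty$; and $W_g\geq W\geq C_I>0$ because good sweepouts are in particular sweepouts and every sweepout of $B_R(x)$ sweeps out $B_r(x)$.
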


\begin{proof}
Let $x$ be such that $\liminf_{r \rightarrow \infty} \frac{Vol(B_r(x))}{r} =0 $
Fix a small geodesic ball $B_r(x)$
and define an isoperimetric constant $C_I = \inf \{\Hn(\Si) \}$,
where the infimum is taken over all
hypersurfaces in $B_r(x)$, subdividing $B_r(x)$
into two subsets of equal volume.
By the coarea formula we can find $R>r$ with
$\Hn(\partial B_R(x)) < \frac{C_I}{100}$
and $\partial B_R(x)$ smooth.

It follows that $B_R(x)$ is a good set.
The distance function $d_x(y) = dist(x,y)$
may not be smooth, but there exists
a smoothing of this function $\tilde{d}_x$ (see \cite{GW}),
such that $\tilde{d}_x = d_x$ in 
$B_R(x)$ and $|\nabla \tilde{d}_x| \leq 1 +\varepsilon$ for all $y$.
Moreover, we may assume that $\tilde{d}_x$ is a Morse function.

Hence, 
the set of good sweepouts of $B_R(x)$ is non-empty.
Every sweepout of $B_R(x)$ is also a sweepout of 
$B_r(x)$, so it must contain a hypersurface of area
at least $C_I$.
\end{proof}

\subsection{Proof of Theorem \ref*{main}}
Theorem \ref*{main} follows immediately from 
the following Theorem.

\begin{theorem} \label{main_full}
Let $M^{n+1}$ be a complete Riemannian
manifold of dimension $n+1$.
Suppose $M$ contains a good set $U$.
For every $\delta>0$ there exists a 
complete embedded minimal hypersurface $\G$,
satisfying the following properties:
\begin{enumerate}
    \item $\Hn(\G) \leq W_{\partial}(U) + \Hn(\partial U)$;
    \item $\Hn(\G \cap N_{\delta}(U)) \geq \frac{\varepsilon(U)}{2}$,
\end{enumerate}
where $\varepsilon(U)$ is as in Lemma \ref*{epsilon}.
The hypersurface is smooth
in the complement of a closed set of dimension $n-7$.
\end{theorem}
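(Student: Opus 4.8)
The plan is to run the standard Almgren–Pitts/De Lellis–Tasnady min-max machinery on the good set $U$, but feeding it the \emph{good} sweepouts whose existence and properties are controlled by the earlier sections, so that the pulling-tight procedure produces a limit varifold that is stationary, integral, and — crucially — has a definite amount of mass near $U$. First I would fix $\delta>0$ and choose a minimizing sequence of good sweepouts $\{\Gamma^j_t\}$ of $U$ with $\sup_t \Hn(\Gamma^j_t)\to W_g(U)$. By \eqref{w-wg} we have $W_g(U)=W(U)$, and by \eqref{w-wrel} together with \eqref{w-wn} we have $W(U)=W_n(U)\le W_\partial(U)+\Hn(\partial U)$; this is what gives the upper bound (1) on the area of the limit hypersurface. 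Apply Proposition \ref*{prop:intersection_U} to each $\{\Gamma^j_t\}$ to extract a slice $\Gamma^j_{t_j}$ with $\Hn(\Gamma^j_{t_j})\ge W_g(U)$ — hence, up to passing to a subsequence, with area converging to $W_g(U)$ from above after a harmless reparametrization — and with $\Hn(\Gamma^j_{t_j}\cap cl(U))\ge \varepsilon(U)$. This is the ``no escape'' input: the candidate min-max sequence $\{\Gamma^j_{t_j}\}$ cannot drift entirely into the ends of $M$.

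Next I would apply the combinatorial min-max/pulling-tight argument of \cite{DT} to the sequence of good sweepouts. The key point for which the ``good'' condition (sw$_g$) was designed: since $\Hn(\Gamma^j_0)\le \Cgs$ and $\Hn(\Gamma^j_1)<\Cgs$, and since $\Cgs = 5\Hn(\partial U)$ is bounded away from $W_g(U)$ (which exceeds $W_\partial(U)\ge 10\,\Hn(\partial U)$ for a good set), the endpoints of the sweepouts never have near-maximal area, so the pulling-tight deformation fixes them and we remain within the class of good sweepouts. The min-max procedure then yields a subsequence of slices whose areas converge to $W_g(U)$ and which are ``almost minimizing in small annuli'' in the sense of \cite{DT}; their varifold limit $V$ is stationary and integral, with $\|V\|(M)\le W_g(U)\le W_\partial(U)+\Hn(\partial U)$. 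Here one must check that the De Lellis–Tasnady constructions, which are stated for a closed ambient manifold, localize: all the competitor replacements happen inside small balls contained in a fixed compact region $N_1(U)$, so compactness of that region is enough and the non-compactness of $M$ does not interfere.

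The main obstacle — and the step I would spend the most care on — is passing from the localization statement at the level of \emph{each sweepout} (Proposition \ref*{prop:intersection_U}) to a localization statement for the \emph{min-max limit}. The pulling-tight deformation only decreases the maximal area and fixes the homotopy class, but a priori it could move mass out of $cl(U)$ at the specific slice we selected. To handle this I would argue as follows: if at some stage the pulling-tight process produced a sweepout all of whose near-maximal slices had $\Hn(\cdot\cap cl(U))<\varepsilon(U)$, then by Proposition \ref*{prop:intersection_U} applied to that new sweepout there would still exist a slice of area $\ge W_g(U)$ meeting $cl(U)$ in measure $\ge\varepsilon(U)$ — so the property ``some slice of near-maximal area has large intersection with $cl(U)$'' is preserved along the whole deformation. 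Choosing the min-max sequence from these slices, and using the monotonicity formula (valid on the compact set $N_\delta(U)$, with the relevant constants uniform there), the mass $\varepsilon(U)$ concentrated in $cl(U)$ at scale bounded below cannot all disappear in the varifold limit; at worst it spreads into $N_\delta(U)$, giving $\Hn(\G\cap N_\delta(U))=\|V\|(N_\delta(U))\ge \varepsilon(U)/2$ for the chosen $\delta$, which is exactly (2). Finally, the regularity statement — smoothness away from a closed set of dimension $n-7$, and completeness of $\G$ as a properly embedded hypersurface in $M$ — follows from Schoen–Simon/Pitts regularity theory applied to the almost-minimizing stationary integral varifold $V$, exactly as in \cite{DT}, since regularity is a purely local assertion and $V$ has locally bounded mass.
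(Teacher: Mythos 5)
Your skeleton agrees with the paper's: a minimizing sequence of good sweepouts, the width identities \eqref{w-wrel}--\eqref{w-wg} for the area bound (1), Proposition \ref*{prop:intersection_U} as the ``no escape'' input, and De Lellis--Tasnady regularity for the limit. However, two steps at the heart of the argument are missing or incorrect. First, the pull-tight. Your claim that the \cite{DT} constructions localize because ``all the competitor replacements happen inside small balls contained in a fixed compact region $N_1(U)$'' is not true: the limit hypersurface is complete and need not lie near $U$, so stationarity and the almost-minimizing property must be arranged in annuli around \emph{every} point of $M$, and the tightening deformations act on all of $M$. In a non-compact manifold the space of varifolds of bounded mass is compact in the weak* topology but not in the ${\bf F}$ metric, which is exactly what the standard pull-tight of \cite{Pi}, \cite{CD}, \cite{MN1} uses; the paper has to replace it by a composition of localized tightening maps $\Phi_{U_i}$ built on an exhaustion $U_0\subset U_1\subset\cdots$ of $M$ (Lemmas \ref*{local pull-tight} and \ref*{pull-tight}), and later a diagonal argument (Steps 1 and 2 after Lemma \ref*{combinatorial lemma}) to handle annuli whose centers escape to infinity or converge to $cl(U)$. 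None of this is addressed in your proposal.

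Second, and more seriously, the passage from Proposition \ref*{prop:intersection_U} to a min-max sequence that is \emph{simultaneously} of near-maximal area, almost minimizing in small annuli, and has $\Hn(\cdot\cap cl(N_r(U)))\geq\varepsilon(U)/2$ is a genuine difficulty, and your fix does not close it. Knowing that after each tightening ``some slice of near-maximal area meets $cl(U)$ in measure $\geq\varepsilon(U)$'' does not guarantee that the slice produced by the Almgren--Pitts combinatorial selection (the one that is a.m.\ in annuli) is among those slices; a priori the a.m.\ slices could all be the ones that have drifted away from $U$. The paper resolves this in Lemma \ref*{combinatorial lemma} by running the combinatorial contradiction argument on the set $K_N=A_N\cap B_N(U,r)$ and, crucially, restricting the admissible pairs of open sets to $\mathcal{CO}(\mathcal{A}(r,U))$, where each set is either disjoint from $cl(U)$ or contained in $N_r(U)$; this restriction is what guarantees (Case 2 of the Claim) that the area-decreasing deformations cannot transfer mass into $U$ from far away, so slices with small intersection with $U$ keep small intersection, and the modified sweepout contradicts Proposition \ref*{prop:intersection_U} itself rather than merely the definition of $W_g(U)$. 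Without some such restricted class your contradiction argument does not go through. (Incidentally, once such a sequence exists, your appeal to the monotonicity formula is unnecessary: weak* convergence already gives upper semicontinuity of mass on the compact set $cl(N_r(U))$, which yields conclusion (2) directly; the work is in producing the sequence, not in passing to the limit.)
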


\begin{remark} \label{main_remark}
a) It seems that the min-max argument applied
to families, which are good sweepouts of a good set $U$
may produce a non-compact minimal hypersurface.
Consider the following heuristic example.
Let $S_r$ denote spheres of radius $r$ in $\mathbb{R}^3$.
We modify the Euclidean metric on $\mathbb{R}^3$,
so that the new metric is invariant under rotations around $0$,
and so that the areas of $S_r$ and lengths of great circles on $S_r$ decay exponentially
for $r>1$. If the decay is fast enough the min-max argument for
good sweepouts of the ball $B_2(0)$ seems to produce 
a hyperplane passing through $0$ (of area $\pi + \varepsilon$).

b) If $U$ admits a
metric of non-negative Ricci curvature in the same conformal class
then from \cite{GL} we obtain an
upper bound for the volume of the minimal hypersurface
$\Hn(\G)\leq C(n) \Hnn(U)^{\frac{n}{n+1}}$.

c) The theorem does not exclude the possibility
that the volume of $\G$ is much smaller than $W_g(U)$.
As the min-max sequence converges (weakly) to $\G$ some 
non-zero mass may escape into the ends. 
Nonetheless, we are still able to construct
an almost minimizing min-max sequence converging
to a stationary varifold, so that
regularity arguments of \cite{Pi} apply.
\end{remark}

To prove Theorem \ref*{main_full} we use Proposition \ref*{prop:intersection_U} 
and arguments from \cite{DT}.
For the most part in this section we closely follow \cite{DT}.
However, some modifications are necessary 
in construction of the pull-tight deformation and 
construction of a min-max sequence, which is almost minimizing
in all sufficiently small annuli.

The regularity of a stationary varifold obtained
from a min-max sequence is proved using the notion 
of $\varepsilon$-almost minimizing hypersurfaces introduced
in \cite{Pi}. We will use the notion of almost minimality from 
\cite[2.2]{DT}. 

\begin{definition}
Let $\varepsilon>0$  and $U \subset M$ open. A boundary $\partial \Om$
is called $\varepsilon$-almost minimizing in U if there is NO 1-parameter
family of boundaries $\{ \partial \Om_t \}$, $t \in [0, 1]$, 
satisfying the following properties:

\begin{itemize}
    \item (s1), (s2), (s3), (s4), (sw1), and (sw3) of Definition \ref*{def: sweepout} hold;
    \item $\partial \Om_0 = \Om$ and $\partial \Om _t  \setminus U = \partial \Om  \setminus U$
    for every $t$;
    \item $\Hn(\partial \Om_t) \leq \Hn(\partial \Om) + \frac{1}{8} \varepsilon$;
    \item $\Hn(\partial \Om_1) \leq \Hn(\partial \Om) - \varepsilon$
\end{itemize}

A sequence $\{\partial \Om_k \}$ of hypersurfaces is called almost minimizing in $U$ if
each $\partial \Om_k$ is $\varepsilon_k$-almost minimizing in $U$ 
for some sequence $\varepsilon_k \rightarrow 0$.
\end{definition}

Let $\mathcal{AN}_{r}(x)$ denote the 
set of all open annuli $An(x,t_1,t_2)= B_{t_2}(x) \setminus cl( B_{t_2}(x))$
for $t_1 < t_2 <r$.
We have the following result from \cite{DT}:

\begin{proposition} \label{regularity}
Let $r:M \rightarrow \mathbb{R}_+$ be a function and
$\{ \G^k \}$ is a sequence of
hypersurfaces, s.t.

(A) $\{ \G^k \}$ is a.m. in every $An(x) \in \mathcal{AN}_{r(x)}(x)$;

(B) $\G^k$ converges to a stationary varifold $V$
as $k \rightarrow \infty$.

Then $V$ is induced by an embedded minimal hypersurface,
which is smooth on the complement of a closed set of Hausdorff dimension
at most $n-7$.
\end{proposition}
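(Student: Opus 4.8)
The plan is to deduce this from the min-max regularity theory of Pitts in the form given by De Lellis and Tasnady \cite{DT}, exploiting the fact that both the hypotheses and the conclusion are local in $M$. First I would fix an arbitrary point $x\in M$ and work entirely inside the ball $B_{r(x)}(x)$; it suffices to show that $V$ is induced by a smooth embedded minimal hypersurface in a neighbourhood of $x$, with singular set of Hausdorff dimension at most $n-7$, since such a statement, established at every $x$, gives the global conclusion by patching. Inside $B_{r(x)}(x)$ the hypotheses are exactly those of the standard regularity argument: the boundaries $\G^k$ converge to the stationary varifold $V$, and by (A) the sequence is $\varepsilon_k$-almost minimizing (with $\varepsilon_k\to 0$) in every annulus $An(x,t_1,t_2)$ with $0<t_1<t_2<r(x)$.

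The first step is the replacement construction. Fixing an annulus $An=An(x,t_1,t_2)$ compactly contained in $B_{r(x)}(x)$, I would use the almost-minimizing property to minimize the area functional over boundaries that coincide with $\G^k$ outside $An$, extracting a competitor sequence $\{\G^k_\ast\}$ that still converges to a stationary varifold $V_\ast$ with $V_\ast=V$ on $M\setminus An$ and such that the restriction of $V_\ast$ to $An$ is the varifold of a locally area-minimizing boundary there. By the Schoen--Simon regularity theorem for stable minimal boundaries this restriction is a smooth embedded minimal hypersurface off a closed set of Hausdorff dimension at most $n-7$; this is a \emph{replacement} of $V$ in $An$. The point to notice is that this construction uses almost-minimality only at the single point $x$, i.e.\ hypothesis (A), together with the fact that the $\G^k$ are boundaries of open sets, so that the simplified framework of \cite{DT} (rather than general integral cycles) applies.

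The second step, which I expect to be the technical heart, is to upgrade the existence of replacements in all small annuli at all points to regularity of $V$ itself. Here I would follow \cite[Section 7]{DT} essentially verbatim inside $B_{r(x)}(x)$: show that a replacement can itself be replaced, that two such varifolds agree on the overlap of their annuli, and then combine this with the monotonicity formula and an analysis of tangent cones --- using that $V$ is stationary --- together with the strong maximum principle, to conclude that near $x$ the varifold $V$ coincides with a single smooth embedded minimal hypersurface away from a closed set of dimension at most $n-7$. Since $x$ was arbitrary and the conclusion is local, patching over $M$ yields a complete embedded minimal hypersurface inducing $V$, smooth off a closed set of Hausdorff dimension at most $n-7$.

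The main obstacle, as I see it, is not any new idea but a careful verification that the arguments of \cite{DT}, which are written for a fixed compact ambient manifold and a globally admissible family, are genuinely local at $x$ and need only the restrictions of $V$ and of the $\G^k$ to arbitrarily small balls around $x$. In particular the replacement construction, the iteration of replacements, and the tangent-cone analysis all take place inside the fixed ball $B_{r(x)}(x)$, so the fact that $r$ is merely a positive function with no lower bound causes no difficulty: one simply runs the argument separately in $B_{r(x)}(x)$ for each $x$. I would also check that the varifold-compactness and convergence statements borrowed from \cite{DT} hold locally in the ends of $M$, which is immediate since they only involve subsets with compact closure.
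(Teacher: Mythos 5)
Your proposal is correct and follows essentially the same route as the paper: the authors simply observe that the statement is contained in Propositions 2.6--2.8 of \cite{DT} and that all arguments there are local, hence carry over to the non-compact setting. Your expanded sketch of the replacement construction, Schoen--Simon regularity, and tangent-cone analysis is just an unpacking of that same citation-plus-locality argument.
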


\begin{proof}
This proposition is contained in Propositions 2.6, 2.7 and 2.8 of 
\cite{DT}. All arguments there are local and therefore they apply
to the non-compact case. 
\end{proof}

\begin{proposition} \label{sequence existence}
Let $U \subset M$ be a good set and suppose $W_g(U)< \infty$.
For every $\delta>0$ there exists a function $r:M \rightarrow \mathbb{R}_+$,
$\varepsilon>0$
and a sequence $\{ \G^k \}$,
such that  (A) and (B) of Proposition \ref*{regularity}
hold and

(C) $\Hn(\G^k \cap N_{\delta}(U)) > \varepsilon/2$ for every $k$.

\end{proposition}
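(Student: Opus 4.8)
**Proof Proposal for Proposition \ref*{sequence existence}**

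The plan is to run the min-max machinery of \cite{DT} on the family of good sweepouts of $U$, but to make two modifications: first, to incorporate the localization statement of Proposition \ref*{prop:intersection_U} so that the selected hypersurfaces do not escape to infinity; and second, to carry out the pull-tight and almost-minimizing constructions of \cite{DT} on the non-compact manifold $M$, checking that the relevant estimates remain valid because they are local. By Proposition \ref*{prop:intersection_U}, fix the constant $\varepsilon(U)>0$ and set $\varepsilon = \varepsilon(U)/2$ in the conclusion. First I would choose a minimizing sequence of good sweepouts $\{\Gamma_t^k\}$ of $U$ with $\sup_t \Hn(\Gamma_t^k) \to W_g(U) = W(U)$, which exists since $W_g(U) = W(U) < \infty$ by \eqref{w-wg}.

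Next I would apply the pull-tight deformation of \cite[Section 3]{DT} to each family $\{\Gamma_t^k\}$. Here is the first point where care is needed: in the non-compact setting one localizes the deformation to a large compact region containing a neighbourhood of $U$, using that the min-max value $W_g(U)$ is finite and that the hypersurfaces of near-maximal area concentrate (after the deformation) near the support of a stationary varifold. Since $\{\Gamma_t^k\}$ are good sweepouts, the endpoints $\Gamma_0^k, \Gamma_1^k$ have area at most $\Cgs < W_g(U)$ (using that $U$ is good, so $W_g(U) = W_\partial(U) \geq 10\Hn(\partial U) > \Cgs$), hence the pull-tight leaves the endpoints fixed and we still have a good sweepout. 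Along the deformed family the max area is still converging to $W_g(U)$, and any sequence $\Gamma_{s_k}^k$ with $\Hn(\Gamma_{s_k}^k) \to W_g(U)$ converges, up to subsequence, to a stationary varifold $V$; this gives (B).

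For (C): apply Proposition \ref*{prop:intersection_U} to the deformed good sweepout $\{\Gamma_t^k\}$ to obtain, for each $k$, a slice $\Gamma_{t'(k)}^k$ with $\Hn(\Gamma_{t'(k)}^k) \geq W_g(U)$ and $\Hn(\Gamma_{t'(k)}^k \cap cl(U)) \geq \varepsilon(U)$. Combining with the upper bound $\sup_t \Hn(\Gamma_t^k) \to W_g(U)$, we get $\Hn(\Gamma_{t'(k)}^k) \to W_g(U)$, so $\{\Gamma^k\} := \{\Gamma_{t'(k)}^k\}$ is a min-max sequence, and $\Hn(\Gamma^k \cap cl(U)) \geq \varepsilon(U) = 2\varepsilon > \varepsilon/2$, which is stronger than (C) (and passing to the $\delta$-neighbourhood only helps). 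The remaining task is to upgrade $\{\Gamma^k\}$ to a sequence that is almost minimizing in small annuli while preserving (C): this is the combinatorial argument of \cite[Section 4 / Proposition 4.3]{DT}, in which one replaces $\Gamma^k$ by competitor hypersurfaces on a grid of annuli, decreasing area slightly at each step. The key observation is that these replacements are supported in small balls disjoint from $cl(U)$ for all but finitely many scales — more precisely, one runs the a.m. construction only on annuli $An(x) \in \mathcal{AN}_{r(x)}(x)$ with $An(x) \cap cl(U) = \emptyset$, which suffices for the regularity conclusion of Proposition \ref*{regularity} since regularity is a local statement and the points of $cl(U)$ where the varifold has positive mass are handled by annuli centered slightly outside $U$ — so the intersection with $N_\delta(U)$ is unaffected and (C) is retained for the final sequence.

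The main obstacle I expect is the interaction between the almost-minimizing replacement procedure and the localization bound (C): one must ensure that the process of replacing $\Gamma^k$ by an a.m. competitor near a point $x$ does not destroy the mass $\Hn(\Gamma^k \cap N_\delta(U))$. The cleanest way around this is to observe that the combinatorial argument of \cite{DT} only ever modifies $\Gamma^k$ inside annuli of radius at most $r(x)$, that one is free to shrink the radius function $r$, and that if the limiting varifold $V$ had all of its mass inside a tiny neighbourhood of a single point of $cl(\partial U)$ one could center the relevant annuli just outside $U$; more carefully, one should choose $r(x)$ small enough (depending on $\delta$) that every annulus used in the construction at a point within distance $\delta/2$ of $U$ is contained in $N_\delta(U)$, and note that replacements only decrease area, so they cannot decrease the total mass below the limit value $W_g(U)$ — combined with the a priori concentration near $U$ this forces the positive lower bound (C) to persist in the limit and hence for all large $k$. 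The rest of the argument (finiteness of $W_g(U)$, the endpoint estimates, extraction of subsequences) is routine given the results already established in the paper.
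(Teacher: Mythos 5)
There is a genuine gap at the heart of your argument, namely where you propose to ``run the a.m.\ construction only on annuli $An(x)$ with $An(x)\cap cl(U)=\emptyset$'' and assert that points of $cl(U)$ can be ``handled by annuli centered slightly outside $U$.'' Condition (A) of Proposition \ref*{regularity} requires the sequence to be almost minimizing in \emph{every} small annulus centered at \emph{every} point, and by (C) the limit varifold has mass at least $\varepsilon/2$ inside $N_\delta(U)$ — so the regularity/replacement argument must be run precisely at points of $cl(U)$, where annuli centered outside $U$ do not suffice. The actual difficulty of the proposition is to reconcile almost-minimality in annuli meeting $U$ with the preservation of the localization bound (C), and your resolution (``replacements only decrease area, so (C) persists'') does not address it: the contradiction argument that produces a.m.\ slices builds a competitor sweepout by modifying hypersurfaces inside open sets, and without constraints those modifications can move area out of $U$, destroying exactly the slices that Proposition \ref*{prop:intersection_U} guarantees. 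The paper handles this by a modified combinatorial selection (Lemma \ref*{combinatorial lemma}): slices are chosen in the set $K_N=A_N\cap B_N(U,r)$ (large total area \emph{and} large area in $cl(N_r(U))$), competitors are restricted to the class $\mathcal{A}(r,U)$ of open sets that are either disjoint from $cl(U)$ or contained in $N_r(U)$, and a mass-transfer estimate (Case 2 of the claim) shows that after modification a slice with $\Hn(\partial\Om'_t\cap U)\geq\varepsilon$ must have area below $W_g(U)$, contradicting Proposition \ref*{prop:intersection_U}. Balls $B_r(x)$ with $x\in cl(U)$ do lie in $\mathcal{A}(r,U)$, which is how a.m.\ in annuli around points of $cl(U)$ is then extracted (Step 1 of the paper's final argument); your scheme never produces this.

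A second, smaller gap is the pull-tight step. You propose to ``localize the deformation to a large compact region,'' but that only yields stationarity of the limit within that region, and your justification (concentration near the support of a stationary varifold) presupposes the conclusion. Since $\mathcal{V}$ is not compact in the ${\bf F}$ metric when $M$ is non-compact, the paper instead builds, over an exhaustion $U_0\subset U_1\subset\cdots$ of $M$, local tightening maps $\Phi_{U_i}$ (Lemma \ref*{local pull-tight}) which fix varifolds of small mass, and applies the compositions $\Phi_{U_0}\circ\cdots\circ\Phi_{U_i}$ to the $i$-th sweepout; a quantitative bookkeeping of the constants $\tau_k,\varepsilon_k$ then shows every min-max sequence approaches $\mathcal{V}_{st}$ in the weak* metric. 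Your outline would need an argument of this type (or an explicit diagonal construction) to obtain (B) on all of $M$. The remaining ingredients of your proposal (choice of $\varepsilon=\varepsilon(U)/2$, use of Proposition \ref*{prop:intersection_U}, the two-case analysis of centers escaping or accumulating) are consistent with the paper, but as written the proof does not go through.
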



Combining Propositions \ref*{regularity} and \ref*{sequence existence}
we obtain that $M$ contains a stationary varifold $V$ induced by a minimal hypersurface $\Si$
with $\Hn(\Si \cap N_{\delta}(U)) > \varepsilon/2$. In particular, the intersection 
of $\Si$ with $N_{\delta}(U)$ is non-empty and the minimal hypersurface has volume at
least $\varepsilon/2$.
This implies Theorem \ref*{main_full}.

The rest of this section will be devoted to the proof of Proposition \ref*{sequence existence}.

\subsection{Pull-tight}


Using terminology from \cite{DT}
we say that a sequence $\{ \G_t^i \}$
of good sweepouts
of $U$ is minimizing if 
$lim_{i \rightarrow \infty} 
sup_t \Hn(\G_t^i) = W_g(U)$
and a sequence of hypersurfaces
$\{ \G_{t_i}^i \}$ with
$lim_{i \rightarrow \infty} \Hn(\G_{t_i}^i) \rightarrow W_g(U)$
will be called a min-max sequence.

Let $\mathcal{V}$ denote the space of varifolds
in $M$ with mass bounded by $2W_g(U)$.
$\mathcal{V}$ is endowed with weak* topology.
By the Riesz Representation Theorem and
the Banach-Alaoglu Theorem
this space is compact and metrizable. Let $\mathfrak{d}$ denote
a metric on $\mathcal{V}$ which induces this topology. 

Another important metric on the space of varifolds is 
given by (see \cite[2.1(19)]{Pi})

$${\bf F}(V_1,V_2) = \sup \{V_1(f)-V_2(f) | f \in \mathcal{K}(Gr_n(M)),
|f| \leq 1, Lip (f) \leq 1\}$$
where $\mathcal{K}(Gr_n(M))$ denotes the set of
Lipschitz functions compactly supported
in $Gr_n(M)$.

When manifold $M$ is compact 
the topology of the ${\bf F}$ metric and
the weak* topology on $\mathcal{V}$ coincide.
When $M$ is not compact these topologies
are different. 
Moreover, in this case $\mathcal{V}$
is not compact in the ${\bf F}$ metric.
The standard pull-tight argument 
(see \cite[Theorem 4.3]{Pi},  \cite[Proposition 4.1]{CD}
 and \cite[Proposition 8.5]{MN1}) uses compactness with 
the ${\bf F}$ metric in an important way, so in our case
the argument has to be modified. 
We apply the pull-tight iteratively 
on a sequence of nested open subsets $U_i$ exhausting $M$.
This is reminiscent of Schoen and Yau's 
arguments in the proof of Positive Mass Theorem \cite{SY}.
 
Let $\mathcal{V}_{st} \subset \mathcal{V}$ denote
the closed subset of stationary varifolds
in $\mathcal{V}$ (see \cite[8.2]{Si}).
If $\G$ is a hypersurface we 
will slightly abuse notation and
write $\G$ to denote the varifold induced by $\G$.

\begin{lemma} \label{pull-tight}
There exists a minimizing sequence
$\{\{ \G_{t}^i \}\}$
of good sweepouts of $U$,
such that for every min-max sequence
$\{ \G_{t_i}^i \}$ we have
$\lim_{i \rightarrow \infty} 
\mathfrak{d}(\G_{t_i}^i, \mathcal{V}_s) = 0$.
\end{lemma}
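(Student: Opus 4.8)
The plan is to run the standard Almgren--Pitts ``pulling tight'' construction (see \cite[\S4]{Pi}, \cite{CD}, \cite[\S8]{MN1}, \cite{DT}), but organized entirely around the weak$^*$ topology on $\mathcal V$ --- the one metrized by $\mathfrak d$, which makes $\mathcal V$ compact and metrizable --- rather than around the $\mathbf F$-metric used when $M$ is compact. This substitution is legitimate because for any \emph{fixed} smooth compactly supported vector field $\chi$ the first variation $V\mapsto\delta V(\chi)=\int\operatorname{div}_S\chi\,dV$ is weak$^*$-continuous ($\operatorname{div}_S\chi$ being a continuous compactly supported function on $Gr_n(M)$), and this is essentially the only continuity/compactness input the construction needs.

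First I would build a weak$^*$-continuous ``tightening'' map $H\colon\mathcal V\times[0,1]\to\mathcal V$ with $H(\cdot,0)=\mathrm{id}$, with $H(V,\cdot)\equiv V$ whenever $V\in\mathcal V_{st}$, with $\mathbf M(H(V,s))\le\mathbf M(V)$ for all $V,s$, and such that on every weak$^*$-compact $S\subset\mathcal V\setminus\mathcal V_{st}$ there is $\varepsilon_S>0$ with $\mathbf M(H(V,1))\le\mathbf M(V)-\varepsilon_S$ for all $V\in S$. The construction is the familiar partition-of-unity one: for each non-stationary $V$ pick a compactly supported $\chi$ with $\|\chi\|_{C^1}\le1$ and $\delta V(\chi)<0$; by weak$^*$-continuity of $W\mapsto\delta W(\chi)$ this persists on a weak$^*$-neighbourhood of $V$; patch finitely many such neighbourhoods over the weak$^*$-compact bands $\overline{D_m}=\{2^{-m-1}\le\mathfrak d(\cdot,\mathcal V_{st})\le2^{-m}\}$ to get continuous $V\mapsto\chi^{(m)}_V$ that is a convex combination of \emph{finitely many fixed} fields (hence uniformly compactly supported) with $\delta V(\chi^{(m)}_V)\le-c_m<0$ there; glue the $\chi^{(m)}$ over $m$ by a locally finite partition of unity and set $\chi_V=0$ on $\mathcal V_{st}$. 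Then put $H(V,s)=(\Phi^{s\theta(V)}_{\chi_V})_{\#}V$, where $\Phi^\sigma_\chi$ is the time-$\sigma$ flow and $\theta\colon\mathcal V\to[0,1]$ is continuous, positive off $\mathcal V_{st}$, zero on $\mathcal V_{st}$, and small enough on each band that the first variation remains $\le\tfrac12\delta V(\chi_V)$ along the flow up to time $\theta(V)$ (this gives $\mathbf M(H(V,\cdot))$ non-increasing, with the quantitative decrease on $\overline{D_m}$ because $\theta\ge\theta_m>0$ there). The reparametrization by $\theta$ is what makes $H$ continuous across $\mathcal V_{st}$: there the flow time $s\theta(V)\to0$, so the ambient diffeomorphism tends to the identity in $C^1_{\mathrm{loc}}$, which together with the uniform bound $\mathbf M\le2W_g(U)$ forces weak$^*$-convergence of the pushforwards. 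As in the isotopy setup of \cite{DT}, each $H(V,\cdot)$ is an ambient isotopy depending continuously on $V$ and, on any weak$^*$-compact set of $V$'s, supported in a single compact subset of $M$, so $H$ sends a continuous family of hypersurfaces that are boundaries of open sets to another such family.

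Next I would apply $H$ to a minimizing sequence while keeping its endpoints fixed, and then read off the conclusion. Take any minimizing sequence $\{\{\G^i_t\}\}$ of good sweepouts of $U$, so $\sup_t\Hn(\G^i_t)\to W_g(U)=:W$. Since $U$ is a good set, \eqref{w-wrel}, \eqref{w-wn} and \eqref{w-wg} give $W=W_n(U)\ge W_{\partial}(U)\ge10\,\Hn(\partial U)$, so I may fix $\delta>0$ with $W-2\delta>5\,\Hn(\partial U)$ and a continuous $\beta\colon[0,\infty)\to[0,1]$ equal to $1$ on $[W-\delta,\infty)$ and to $0$ on $[0,W-2\delta]$; set $\tilde\G^i_t=H\bigl(\G^i_t,\,\beta(\Hn(\G^i_t))\bigr)$, with the open sets the images of $\Om^i_t$ under the corresponding isotopies. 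Because $\Hn(\G^i_0),\Hn(\G^i_1)<5\,\Hn(\partial U)<W-2\delta$, the cutoff vanishes at $t=0,1$, so $\tilde\G^i_0=\G^i_0$, $\tilde\G^i_1=\G^i_1$, and $\{\tilde\G^i_t\}$ is again a good sweepout of $U$; moreover $\Hn(\tilde\G^i_t)\le\Hn(\G^i_t)$, so $\sup_t\Hn(\tilde\G^i_t)\to W$ and $\{\{\tilde\G^i_t\}\}$ is a minimizing sequence of good sweepouts (after the routine check, as in \cite{DT}, that $\{\tilde\G^i_t\}$ meets the regularity and continuity axioms of a family of hypersurfaces). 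Now let $\tilde\G^i_{t_i}$ be a min-max sequence for it, $\Hn(\tilde\G^i_{t_i})\to W$. Then $\Hn(\G^i_{t_i})\to W$ as well (it is squeezed between $\Hn(\tilde\G^i_{t_i})$ and $\sup_t\Hn(\G^i_t)$), so $\beta(\Hn(\G^i_{t_i}))=1$ and $\tilde\G^i_{t_i}=H(\G^i_{t_i},1)$ for large $i$. If $\mathfrak d(\tilde\G^i_{t_i},\mathcal V_{st})\not\to0$, pass to a subsequence with $\mathfrak d(\tilde\G^i_{t_i},\mathcal V_{st})\ge\eta_0>0$ and $\G^i_{t_i}\rightharpoonup V'$ (weak$^*$-compactness of $\mathcal V$). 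By continuity of $H$, $\tilde\G^i_{t_i}=H(\G^i_{t_i},1)\rightharpoonup H(V',1)$, hence $\mathfrak d(H(V',1),\mathcal V_{st})\ge\eta_0$; in particular $H(V',1)\notin\mathcal V_{st}$, which forces $V'\notin\mathcal V_{st}$ (otherwise $H(V',1)=V'\in\mathcal V_{st}$). Since $\mathfrak d(\G^i_{t_i},\mathcal V_{st})\to\mathfrak d(V',\mathcal V_{st})>0$, the slices $\G^i_{t_i}$ eventually lie in a fixed weak$^*$-compact subset $S\subset\mathcal V\setminus\mathcal V_{st}$, so $\Hn(\tilde\G^i_{t_i})=\mathbf M(H(\G^i_{t_i},1))\le\Hn(\G^i_{t_i})-\varepsilon_S$; combined with $\Hn(\G^i_{t_i})\le W+o(1)$ this gives $\limsup_i\Hn(\tilde\G^i_{t_i})\le W-\varepsilon_S<W$, contradicting $\Hn(\tilde\G^i_{t_i})\to W$. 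Hence $\mathfrak d(\tilde\G^i_{t_i},\mathcal V_{st})\to0$ for every min-max sequence.

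I expect the main obstacle to be the non-compactness of $M$, which bites in two ways. First, the tightening vector fields cannot be chosen with uniformly bounded support over all of $\mathcal V$ (a hypersurface of bounded area can fail to be stationary only deep in an end), so one gets uniformly compact support only over weak$^*$-compact subsets of $\mathcal V$; the argument must be arranged so that this weaker statement always suffices --- in the last step it does, because the relevant slices $\G^i_{t_i}$ are eventually confined to one fixed weak$^*$-compact set away from $\mathcal V_{st}$. Second, mass is only weak$^*$ lower semicontinuous, so the drop in area of the min-max sequence cannot be detected from its varifold limit; one genuinely needs the \emph{quantitative} decrease $\mathbf M(H(V,1))\le\mathbf M(V)-\varepsilon_S$ of the $n$-areas of the hypersurfaces themselves, which is the point requiring care in the construction of $H$.
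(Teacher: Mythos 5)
Your proposal is correct in outline, but it is organized quite differently from the paper's argument, so it is worth comparing the two. The paper never works with a single global tightening map: it fixes an exhaustion $U_0\subset U_1\subset\dots$ of $M$ by bounded open sets, constructs for each $U_i$ a \emph{local} pull-tight map $\Phi_{U_i}$ (Lemma \ref{local pull-tight}) built from vector fields compactly supported in $U_i$ and from compactness of the restricted varifolds in the ${\bf F}_{U_i}$ metric, with quantitative constants $\tau_k,\varepsilon_k$ recording how far varifolds are moved and how much mass drops on each ${\bf F}_{U_i}$-distance band; it then applies the composition $\Phi_{U_0}\circ\dots\circ\Phi_{U_i}$ to the $i$-th sweepout and derives the conclusion by a two-case bookkeeping argument ($\frac{1}{2^k}+\sum_l\tau^l_k<\delta/2$ versus a definite mass drop $\varepsilon^{U_l}_k$), handling each $U_i$ separately. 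Endpoints of good sweepouts are preserved because $\Phi_\Om$ is declared to be the identity on varifolds of mass at most $\Cgs$, whereas you preserve them with a cutoff $\beta(\Hn(\G^i_t))$ in the sweepout parameter, which forces you to invoke $W_g(U)=W(U)=W_n(U)\geq W_{\partial}(U)\geq 10\Hn(\partial U)>\Cgs$ (a legitimate, non-circular use of (\ref{w-wrel})--(\ref{w-wg}), but an extra dependence the paper's proof does not need). Your route instead globalizes from the start: you observe that $V\mapsto\delta V(\chi)$ is weak$^*$-continuous for any fixed compactly supported $\chi$, so the bands $\{2^{-m-1}\leq\mathfrak d(\cdot,\mathcal V_{st})\leq 2^{-m}\}$ are weak$^*$-compact and a single tightening homotopy $H$ with quantitative mass decrease on compact subsets of $\mathcal V\setminus\mathcal V_{st}$ can be built; the final contradiction then uses weak$^*$-compactness of $\mathcal V$ to extract a limit $V'$ and the definite decrease $\varepsilon_S$ on a band containing the tail of the min-max sequence. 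What each approach buys: the paper's localization yields, along the way, almost-stationarity of the restrictions in every $U_i$ and sidesteps any modification of the parametrization, at the cost of the exhaustion/composition bookkeeping with the ${\bf F}_{U_i}$ metrics; your version eliminates that bookkeeping by exploiting exactly the right feature of the non-compact setting (compactness of $\mathcal V$ in the topology metrized by $\mathfrak d$, which is all that stationarity testing requires), at the cost of the cutoff device and of verifying joint weak$^*$-continuity of $H$ across $\mathcal V_{st}$ via the reparametrization $\theta$ --- a point you correctly identify and which plays the same role as the choice $t_V\leq 1/k$ in the paper's Lemma \ref{local pull-tight}. The regularity bookkeeping (that the deformed family still satisfies (s1)--(s3), (sw1)--(sw3)) is treated at the same level of brevity in both arguments, and since Lemma \ref{combinatorial lemma} only uses the conclusion of Lemma \ref{pull-tight}, your proof would splice into the rest of the section without changes.
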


Let $\Om \subset M$ be an open subset.
Let $\mathcal{V}_{\Om}$ denote the space of varifolds
in $\Om$ with mass bounded by $2W_g(U)$.
For varifolds
in $\Om$ we can define metric

$${\bf F}_{\Om}(V_1, V_2)= \sup \{V_1(f)-V_2(f) | f \in \mathcal{K}(Gr_n(\Om)),
|f| \leq 1, Lip f \leq 1\}$$

It follows from the definition that
$${\bf F}_{\Om_1}(V_1 \llcorner Gr_n(\Om_1), 
V_2 \llcorner Gr_n(\Om_1)) \leq {\bf F}_{\Om_2}(V_1
\llcorner Gr_n(\Om_2), V_2 \llcorner Gr_n(\Om_2))$$
whenever $\Om_1 \subset \Om_2$.
When $\Om$ is a bounded subset of $M$
the weak topology on $\mathcal{V}_{\Om}$ and the topology induced
by the ${\bf F}_{\Om}$ metric coincide.

We will also need the following notation.
%
Let $\mathcal{V}_{\Om,st}$ denote the set of all stationary 
varifolds of $\Om$ of mass at most $2W_g$.

\begin{lemma} \label{local pull-tight}
Let $\Om_1 \subset \Om_2$ be two bounded open set.
There exists a map
$\Phi_{\Om_1, \Om_2}: \mathcal{V} \rightarrow \mathcal{V}$
and monotone sequences of positive numbers 
$\tau_1(\Om_1) \geq \tau_2(\Om_1) \geq ... \tau_k(\Om_1) \rightarrow 0$
and $\varepsilon_1(\Om_1) \geq \varepsilon_2(\Om_1) \geq ... \varepsilon_k(\Om_1) \rightarrow 0$
depending only on $\Om_1$ with the following properties.

\begin{enumerate}
    \item $||\Phi_{\Om_1, \Om_2}(V)||(M) \leq ||V||(M)$
    
    \item If $||V||(cl(\Om_2)) \leq \Cgs$
    then $\Phi_{\Om_1, \Om_2}(V)=V$
    
    \item If  $||V||(cl(\Om_2)) \geq \Clgs$
    and ${\bf F}_{\Om_1}(V \llcorner Gr_n(\Om_1), \mathcal{V}_{\Om_1,st}) \in [\frac{1}{2^{k+1}},\frac{1}{2^k}]$
    then the following holds:
    
    A. $||\Phi_{\Om_1, \Om_2}(V)||(M) \leq ||V||(M) - \varepsilon_k$
    
    B. ${\bf F} (V, \Phi_{\Om_1, \Om_2}(V)) \leq \tau_k$
\end{enumerate}

Moreover, if $\{ support(V_t) \}$ is a family
of hypersurfaces
in a sense of Definition \ref*{hypersurface_def}
then so is $\{ support(\Phi_{\Om_1, \Om_2}(V_t)) \}$.

\end{lemma}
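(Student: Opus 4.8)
The plan is to carry out the classical pull-tight construction of Pitts, Colding--De Lellis and De Lellis--Tasnady (\cite[Theorem 4.3]{Pi}, \cite[Proposition 4.1]{CD}, \cite{DT}), insisting that every deformation vector field be compactly supported in the bounded set $\Om$. This localization matters for two reasons. First, for $X\in\mathcal{X}_c(\Om)$ the first variation $\delta V(X)$ coincides with $\delta(V\llcorner Gr_n(\Om))(X)$, so ``$V\llcorner Gr_n(\Om)$ is not stationary in $\Om$'' is precisely the hypothesis that yields a mass-decreasing vector field. Second, although $(\mathcal{V},\mathfrak{d})$ is compact only in the weak-$*$ topology, the restriction $V\llcorner Gr_n(\Om)$ lies in $\mathcal{V}_\Om$, which is compact in the ${\bf F}_\Om$ metric and inside which $\mathcal{V}_{\Om,st}$ is closed; this supplies the compactness needed to upgrade pointwise estimates to uniform ones. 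Moreover $V\mapsto V\llcorner Gr_n(\Om)$ is continuous from $(\mathcal{V},\mathfrak{d})$ to $(\mathcal{V}_\Om,{\bf F}_\Om)$, since any $f\in\mathcal{K}(Gr_n(\Om))$ extends by zero to a continuous function on $Gr_n(M)$, so $\mathfrak{d}$-convergence forces convergence of the restrictions tested against $f$. Property (2) will be built in by multiplying all vector fields by a cutoff in the total mass.

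First I would fix a smooth $\chi:\mathbb{R}\to[0,1]$ with $\chi\equiv 0$ on $(-\infty,\Cgs]$ and $\chi\equiv 1$ on $[\Clgs,\infty)$, and for $k\ge 1$ put $A_k=\{V\in\mathcal{V}:{\bf F}_\Om(V\llcorner Gr_n(\Om),\mathcal{V}_{\Om,st})\in[2^{-k-1},2^{-k}]\}$ (together with one further region covering the bounded range where this distance exceeds $1/2$, treated identically). Every $V\in A_k$ has $V\llcorner Gr_n(\Om)$ non-stationary in $\Om$, so there is $X_V\in\mathcal{X}_c(\Om)$ with $\|X_V\|_{C^1}\le 1$ and $\delta V(X_V)\le -c(V)<0$; since $\operatorname{div}_S X_V$ is a fixed element of $\mathcal{K}(Gr_n(\Om))$, the map $W\mapsto\delta W(X_V)$ is ${\bf F}_\Om$-continuous, so $\delta W(X_V)\le -c(V)/2$ on an ${\bf F}_\Om$-neighbourhood of $V$. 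Using compactness of $\mathcal{V}_\Om$ I would take a finite subcover of (a neighbourhood of) the compact set $A_k$ and, from a subordinate partition of unity, build a continuous map $H_k$ into $\mathcal{X}_c(\Om)$ with $\delta V(H_k(V))\le -c_k<0$ on $A_k$ --- the first variation being linear in the vector field, convex combinations keep the sign. Interpolating the $H_k$ across the overlaps $\{{\bf F}_\Om\text{-distance}=2^{-k-1}\}$ and damping by a factor that vanishes as the ${\bf F}_\Om$-distance to $\mathcal{V}_{\Om,st}$ tends to $0$, so the assignment extends continuously by $0$ over $\mathcal{V}_{\Om,st}$, produces a continuous $V\mapsto X_V\in\mathcal{X}_c(\Om)$.

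Next I would flow. Let $\psi^V_s$ be the flow of $\chi(\|V\|(M))\,X_V$, and pick a continuous $T(V)\in[0,1]$ that vanishes where $\chi(\|V\|(M))X_V=0$ and is comparable to $\min\{T_k,2^{-k}\}$ on $A_k$, with $T_k>0$ small enough (again by compactness of $A_k$) that for $V\in A_k$ and $0\le s\le T(V)$ one has $\delta((\psi^V_s)_\#V)(X_V)\le -c_k/4$; this makes the mass of $(\psi^V_s)_\#V$ non-increasing in $s$ and strictly decreasing at rate $\ge c_k/4$ once $\chi(\|V\|(M))=1$. Set $\Phi_\Om(V)=(\psi^V_{T(V)})_\#V$. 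Then (1) holds by the previous sentence (so also $\Phi_\Om(V)\in\mathcal{V}$); (2) is immediate from $\chi$; for $V\in A_k$ with $\|V\|(M)\ge\Clgs$ the mass drops by at least $(c_k/4)T(V)\ge\varepsilon_k>0$, which is (3A); and since $\psi^V_s$ moves points only inside $\Om$ at unit speed for time at most $T(V)\le C2^{-k}$, the standard ${\bf F}$-estimate for pushforwards by $C^1$-small diffeomorphisms, with constant depending on $2W_g(U)$, gives ${\bf F}(V,\Phi_\Om(V))\le\tau_k$ with $\tau_k\to 0$, which is (3B); a relabelling makes $\varepsilon_k,\tau_k$ monotone. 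For the closing assertion, if $\{\operatorname{support}(V_t)\}$ is a family of hypersurfaces then $t\mapsto V_t$ is continuous as varifolds (here I use the strong convergence in Definition \ref*{hypersurface_def}), hence $t\mapsto V_t\llcorner Gr_n(\Om)$ is ${\bf F}_\Om$-continuous and $t\mapsto\|V_t\|(M)=\Hn(\operatorname{support}(V_t))$ is continuous; so $t\mapsto X_{V_t}$, $t\mapsto\chi(\|V_t\|(M))$ and $t\mapsto T(V_t)$ are continuous, $t\mapsto\psi^{V_t}_{T(V_t)}$ is a continuous family of diffeomorphisms of $M$ fixing $M\setminus\Om$, and pushing a family of hypersurfaces forward by such a family again satisfies (s1)--(s3).

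I expect the main obstacle to be the construction of the vector field assignment, specifically keeping the two metrics straight: the compactness that drives the finite-cover and partition-of-unity argument is available only in the ${\bf F}_\Om$ metric on $\mathcal{V}_\Om$, whereas $\mathcal{V}$ is non-compact in the global ${\bf F}$ metric. This is exactly what dictates that the deformation vector fields be supported in $\Om$, and it is also where care is needed in the closing argument: it is the strong convergence built into Definition \ref*{hypersurface_def}, not mere weak-$*$ varifold convergence, that yields continuity of $t\mapsto V_t$ and hence of the whole deformation along a family of hypersurfaces.
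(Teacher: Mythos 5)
Your proposal is correct and follows essentially the same route as the paper's proof: deformation vector fields compactly supported in $\Om$, compactness of the restricted varifolds in the ${\bf F}_{\Om}$ metric, a partition-of-unity construction of a continuous assignment $V \mapsto \chi_V$, and short-time flows yielding the mass drop $\varepsilon_k$ and displacement bound $\tau_k$. The only differences are cosmetic implementation choices (a smooth cutoff in total mass versus restricting the partition of unity to $\{||V||(M) > \Cmgs\}$, and interpolation across adjacent distance bands versus the paper's disjointness condition for $|k_1-k_2|\geq 2$).
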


\begin{proof}
Fix integer $k>0$.
Let $p(V) = V \llcorner Gr_n(\Om_1)$ denote the restriction function
and let $\mathcal{V}_{\Om_1,k}$ be the set of varifolds $V$
supported in $Gr_n(\Om_1)$ 
satisfying the following property:


(*) ${\bf F}_{\Om_1}(V, \mathcal{V}_{\Om_1,st}) \in [\frac{1}{2^{k+1}},\frac{1}{2^k}]$

It is straightforward to check that
$\mathcal{V}_{\Om_1,k}$ is compact in the topology induced by the ${\bf F}_{\Om_1}$ metric.

We will say that a smooth vector field $\chi$ is admissible if $\chi$
is compactly supported in $\Om_1$, 
$| \chi |_{C^1} \leq 1$ and $|\chi(x)| \leq dist(x, \partial \Om_1)$.
Let $X_{\Om_1}$ denote the set of all admissible vector fields.
We claim that there exists a $c_k >0$,
such that $\sup_{V \in \mathcal{V}_{\Om_1,k}} \inf_{\chi \in X_{\Om_1}}  \{ \delta V(\chi) \} < -c_k$
for otherwise there would exist a sequence of varifolds $V_i \in \mathcal{V}_{\Om_1,k}$ 
converging (in ${\bf F}_{\Om_1}$) to a stationary varifold supported in $Gr_n(\Om_1)$,
which contradicts condition (*) above.  Here, $\delta V(\chi)$ means the first variation
of $V$ with respect to the vector field $\chi$.

By compactness (cf. arguments in \cite[Theorem 4.3]{Pi},  \cite[Proposition 4.1]{CD}
 and \cite[Proposition 8.5]{MN1})
 we can find a locally finite open covering $\{ U^k_i \}$
of $\mathcal{V}_{\Om_1,k}$ and a collection of admissible vector fields
$\{ \chi^k_i \}$, such that $\delta V(\chi^k_i) < -\frac{c_k}{2}$
for all $V \in U^k_i$ and such that $ U^{k_1}_{i_1}$ is disjoint from  $U^{k_2}_{i_2}$
whenever $|k_1 - k_2| \geq 2$.

Choose a partition of unity $\{ \phi^k_i \}$ subordinate
to $\{ U^k_i \}$.
Let $\chi_V = \sum_{k,i} \phi^k_i(p(V)) \chi^k_i$.
We have that $\chi_{V}$ is admissible for all $V \in \mathcal{V}$
and $\delta V(\chi_{V}) < -\min \frac{1}{2}\{c_{k-1}, c_k, c_{k+1} \}$.

As in the proof of Proposition 4.1 in \cite{CD}
for each $V$ we can define a 1-parameter family of diffeomorphisms 
$\Psi_V: [0, \infty) \times M \rightarrow M$ with
$\frac{\partial \Psi_V (t,x)}{\partial t} = \chi_{V}(\Psi_V (t,x))$,
such that
$\Psi_V(0,x) = x$ for $t=0$ and $\Psi_V(t,x) = x$ for $x \in M \setminus \Om_1$.
It follows that we can make a continuous choice of $t = t(V)\leq 1/k$
and $\varepsilon_k>0$
so that $||\Psi_{V}(t_V, .)_{\#} V||(M) \leq ||V||(M) - \varepsilon_k$
for all $V \in p^{-1}(\mathcal{V}_{\Om_1,k})$.

Let $\eta: \mathcal{V} \rightarrow [0,1]$ be a continuous function
with $\eta(V) = 0$ if $||V ||(cl(\Om_2)) \leq \Cgs$
and $\eta(V) = 1$ if $||V ||(cl(\Om_2)) \geq \Clgs$.
We define $\Phi(V) = \Psi_{V}(\eta(V) t_V, .)_{ \#}(V)$.

Case (2) follows by definition of $\eta$
and Case (3) Properties A and B follow by construction.
\end{proof}

We use Lemma \ref*{local pull-tight}
to prove Lemma \ref*{pull-tight}.

Let $\{\{ \G_{t}^i \}\}$ be a minimizing
sequence of good sweepouts.
We will construct a minimizing sequence
of good sweepouts 
$\{\{ F_i(\G_{t}^i) \}\}$ satisfying the
conclusions of Lemma \ref*{pull-tight}.

Let $U_0 \subset U_1 \subset ...$ and $W_0 \subset W_1 \subset ...$
be bounded open sets with
$M = \bigcup U_i$, $U \subset U_i \subset W_i$ and
$\G_{t}^j \subset W_i$ for all $t$ and all $j \leq i$. 
Let $\tau^{U_i}_l$ and $\varepsilon^{U_i}_l$ be sequences
of numbers from Lemma \ref*{local pull-tight}
for $\Om_1 = U_i$ and $\Om_2 = W_i$.

Maps $F_i$'s are defined as follows.
We set $F_i(\G_{t}^i)$ to be the hypersurface
with $|F_i(\G_{t}^i)| = 
\Phi_{U_0,W_0} \circ ... \circ \Phi_{U_i,W_i}(|\G_{t}^i|)$,
where $\Phi_{U_i,W_i}$ is given by Lemma \ref*{local pull-tight}.
(Here we use the standard notation
that $|\Sigma|$ denotes the varifold induced
by hypersuface $\Sigma$).
Observe that by Lemma \ref*{local pull-tight}
$F_i(\G_{t}^i) = \G_{t}^i$ whenever $\Hn(\G_{t}^i) \leq \Cgs$.
Hence, $\{F_i(\G_{t}^i)\}$ is a good sweepout of $U$.

We claim 
if there exists a (not relabelled) subsequence
$\G_{t_j}^j$ with
$\lim_{j \rightarrow \infty} \Hn(F_j(\G_{t_j}^j)) 
= W_g$ then $\lim_{j \rightarrow \infty} 
{\bf F}_{U_i} (|F_j(\G_{t_j}^j)| \llcorner Gr_n(U_i),
\mathcal{V}_{U_i,st}) = 0$ for every $i$.
This implies Lemma \ref*{pull-tight}.

Fix $i$. For contradiction suppose there exists a 
(not relabelled)
subsequence $\{ F_j(\G_{t_j}^j) \}$ with 
$\lim_{j \rightarrow \infty} \Hn(F_j(\G_{t_j}^j)) 
= W_g$ and $\liminf_{j \rightarrow \infty} {\bf F}_{U_i}
(|F_j(\G_{t_j}^j)| \llcorner Gr_n(U_i),
\mathcal{V}_{U_i,st}) > \delta$.


Pick $k$ sufficiently large
so that $\frac{1}{2^k} + \sum_{l=0}^i \tau_k^{U_l}
< \delta/2$.
Let $\bar{\varepsilon} = \frac{1}{2} \min_{l=0,...,i} 
{\varepsilon^{l}_k}$.
Fix $j > i$ so that
$\Hn (\G_{t_j}^j) \in (W_g - \bar{\varepsilon}/10,
W_g + \bar{\varepsilon}/10)$. 

We have two possibilities. 
Suppose first that for some $l \in \{0,...,i\}$ the varifold
$V_l = \Phi_{U_{l+1}, W_{l+1}} \circ \Phi_{U_{l+2}, W_{l+2}} \circ... \circ \Phi_{U_j,W_j}(|\G_{t_j}^j|)$ satisfies
${\bf F}_{U_{l}}(V_l \llcorner Gr_n(U_{l}), \mathcal{V}_{U_{l},st}) > \frac{1}{2^k}$.
By Lemma \ref*{local pull-tight},
it follows that 
$$\Hn(F_j(\G_{t_j}^j))\leq ||\Phi_{U_{l}, W_l} (V_l)||(M)  \leq ||V_l||(M)  - \varepsilon_k^{U_l} 
\leq \Hn(\G_{t_j}^j)- \varepsilon_k^{U_l}<
W_g + \frac{\bar{\varepsilon}}{10}-\varepsilon_k^{U_l}<W_g - \bar{\varepsilon}$$
which contradicts our assumption on 
$\G_{t_j}^j$.

Suppose now that 
$V_l = \Phi_{U_{l+1}, W_{l+1}} \circ \Phi_{U_{l+2}, W_{l+2}} \circ... \circ \Phi_{U_j,W_j}(|\G_{t_j}^j|)$ satisfies
$${\bf F}_{U_{l}}(V_l \llcorner Gr_n(U_{l}), \mathcal{V}_{U_{l},st}) \leq \frac{1}{2^k}$$
for all $l \in \{0,...,i\}$.
We have that 
$$ {\bf F}_{U_{i}}(V_l \llcorner Gr_n(U_{i}),V_{l-1} \llcorner Gr_n(U_{i})) \leq {\bf F}(V_l,V_{l-1})
\leq \tau^l_k$$
By triangle inequality if follows that
${\bf F}_{U_{i}}(V_i \llcorner Gr_n(U_{i}),
F_j(\G_{t_j}^j) \llcorner Gr_n(U_{i}))
\leq \sum_{l=0} ^i \tau_k^l< \delta/2$.
From our choice of $k$ we obtain, as a result, that
${\bf F}_{U_{i}}(F_j(|\G_{t_j}^j|) \llcorner Gr_n(U_{i}), \mathcal{V}_{U_{i},st})< \delta$,
giving the desired contradiction.


%

\subsection{Almost minimizing hypersurfaces}

\begin{definition} (cf. \cite[3.2]{DT})
Given a pair of open sets $(U_1, U_2)$ we call a hypersurface 
$\Gamma$ $\varepsilon$-a.m. in $(U_1, U_2)$ if it is $\varepsilon$-a.m. in at least one of the two
open sets. Let $\mathcal{CO}(\mathcal{A})$ denote the set of pairs $(U_1, U_2)$ of open sets such that
$\inf_{x \in U_1, y \in U_2} d(x,y) \geq 4 \min \{diam(U_1), diam(U_2)\}$
and $U_i \in \mathcal{A}$ for $i=1,2$.
\end{definition}

Recall that $N_r(U)= \{x \in M: d(x,U) < r \}$ denotes the $r$-neighbourhood of $U$.
Let $\mathcal{A}(r,U)$ denote the set of all open subsets $V$ of $M$, such that
either $V \cap cl(U) = \emptyset$ or $V \subset N_r(U)$.

\begin{lemma} \label{combinatorial lemma}
Let $\{\{ \G_{t}^i \}\}$ be a minimizing sequence of good sweepouts as in Lemma \ref*{pull-tight}
and assume furthermore that $\Hn(\G_t^k)< W_g(U) + \frac{1}{8k}$.
For every $r>0$ and $N$ large enough, there exists $t_N \in [0,1]$ such that 
\begin{itemize}
\item $\G^N = \G^N_{t_N}$ 
is $\frac{1}{N}$-a.m. in all $(U_1, U_2) \in \mathcal{CO}(\mathcal{A}(r,U))$
\item $\Hn(\G^N) \geq W - \frac{1}{N}$
\item $\Hn(\G^N \cap cl(N_r(U))) \geq \varepsilon(U)/2$
\end{itemize}
\end{lemma}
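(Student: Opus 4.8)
The plan is to run the Almgren--Pitts combinatorial argument in the streamlined form of \cite{DT}, restricting attention to pairs in $\mathcal{CO}(\mathcal{A}(r,U))$, and to feed in Proposition \ref{prop:intersection_U} (together with Lemma \ref{epsilon}) so that the competitor we build contradicts \emph{either} the definition of $W_g(U)$ \emph{or} Proposition \ref{prop:intersection_U} itself. Fix $r>0$ and suppose the conclusion fails for infinitely many $N$; after passing to a subsequence and relabelling, assume it fails for all $N$. Call a parameter $t\in[0,1]$ \emph{critical} (for the $N$-th sweepout) if $\Hn(\G^N_t)\ge W_g(U)-\tfrac{1}{N}$ and $\Hn(\G^N_t\cap cl(N_r(U)))\ge\tfrac{\varepsilon(U)}{2}$. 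The failure of the lemma says exactly that every critical $\G^N_t$ is \emph{not} $\tfrac{1}{N}$-a.m.\ in any pair of $\mathcal{CO}(\mathcal{A}(r,U))$, so by the definition preceding the lemma it admits, in each of two $4$-separated sets $U^t_1,U^t_2\in\mathcal{A}(r,U)$, a one-parameter family of boundaries fixed outside that set whose areas stay $\le\Hn(\G^N_t)+\tfrac{1}{8N}$ and end $\le\Hn(\G^N_t)-\tfrac{1}{N}$. The critical set is nonempty for each $N$: Proposition \ref{prop:intersection_U} applied to $\{\G^N_t\}$ produces $t$ with $\Hn(\G^N_t)\ge W_g(U)$ and $\Hn(\G^N_t\cap cl(U))\ge\varepsilon(U)$, hence with $\Hn(\G^N_t\cap cl(N_r(U)))\ge\varepsilon(U)>\tfrac{\varepsilon(U)}{2}$.

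First I would perform the standard combinatorial replacement. Since $t\mapsto\Hn(\G^N_t)$ is continuous, the set $\{\,t:\Hn(\G^N_t)\ge W_g(U)-\tfrac{1}{N}\,\}$, which contains the critical set, lies in a finite union of closed intervals on each of which $\Hn(\G^N_t)>W_g(U)-\tfrac{2}{N}$; using continuity of $t\mapsto\G^N_t$ in the Hausdorff sense and compactness of $\mathcal{CO}(\mathcal{A}(r,U))$, on each of finitely many subintervals a single pair governs all the slices, with room to ramp the associated deformations up and down in a padding zone. Stitching these together over $[0,1]$ as in \cite{DT} -- using the $4$-separation to keep the deformation being switched off disjoint from the one being switched on -- yields a new family $\{\tilde\G^N_t\}$ satisfying (s1)--(s3) and (sw1)--(sw3), i.e.\ a sweepout of $U$. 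All of the deformations are supported in sets of $\mathcal{A}(r,U)$ and act only on slices of area $>\Cgs$ (true once $N$ is large, since $U$ is a good set), so $\G^N_0$ and $\G^N_1$ are left alone and $\{\tilde\G^N_t\}$ is again a \emph{good} sweepout. Two quantitative facts come out of the construction: on the padding zones the deformations are only partial, so areas rise by at most a fixed multiple of $\tfrac{1}{8N}$, while on the originally critical set they reach full strength, giving $\Hn(\tilde\G^N_t)<W_g(U)+\tfrac{1}{8N}-\tfrac{1}{N}<W_g(U)$ there; and since every deformation is supported either away from $cl(U)$ or inside $N_r(U)$, the quantity $\Hn(\tilde\G^N_t\cap N_r(U))$ exceeds $\Hn(\G^N_t\cap cl(N_r(U)))$ by at most that same multiple of $\tfrac{1}{8N}$.

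The contradiction is then as follows. If $\sup_t\Hn(\tilde\G^N_t)<W_g(U)$ for some $N$, this contradicts the definition of $W_g(U)$ as the good width of $U$. Otherwise, Proposition \ref{prop:intersection_U} applied to the good sweepout $\{\tilde\G^N_t\}$ gives a parameter $t'$ with $\Hn(\tilde\G^N_{t'})\ge W_g(U)$ and $\Hn(\tilde\G^N_{t'}\cap cl(U))\ge\varepsilon(U)$. Since areas dropped strictly below $W_g(U)$ at every originally-critical parameter and rose by only $O(1/N)$ elsewhere, for $N$ large $t'$ cannot have been critical; as its original area was necessarily $\ge W_g(U)-O(1/N)>W_g(U)-\tfrac{1}{N}$, the only way criticality can fail is $\Hn(\G^N_{t'}\cap cl(N_r(U)))<\tfrac{\varepsilon(U)}{2}$, whence $\Hn(\tilde\G^N_{t'}\cap cl(U))\le\Hn(\tilde\G^N_{t'}\cap N_r(U))<\tfrac{\varepsilon(U)}{2}+O(1/N)<\varepsilon(U)$ for $N$ large -- contradicting the choice of $t'$. (Lemma \ref{epsilon}(2) gives an alternative closing move: the bound $\Hn(\partial\tilde\Om^N_{t'}\cap U)<2\varepsilon(U)$ forces $\Om^N_{t'}$ to meet $U$ in volume $\le\varepsilon_0$ or $\ge\Hnn(U)-\varepsilon_0$, which likewise precludes $\Hn(\tilde\G^N_{t'}\cap cl(U))\ge\varepsilon(U)$.) Hence the $t_N$ required by the lemma exists for all $N$ large.

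The hard part will be the bookkeeping inside the stitching step: one must choose the padding zones and the order in which overlapping deformations are ramped so that (i) no partially-deformed slice over a padding zone has its area raised back up to $W_g(U)$, and (ii) the combinatorial modification creates no slice that is simultaneously near-maximal and has large $cl(N_r(U))$-intersection. Both are controlled by the built-in gap in the definition of $\tfrac{1}{N}$-a.m.\ (temporary increase $\tfrac{1}{8N}$ versus final gain $\tfrac{1}{N}$) together with Lemmas \ref{epsilon} and \ref{area_continuity}, exactly in the way the corresponding interpolation estimates are organized in \cite{DT}; checking that $\{\tilde\G^N_t\}$ still meets (s1)--(s3), (sw1)--(sw3) after smoothing corners is routine given Section \ref{smoothing}.
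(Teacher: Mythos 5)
Your proposal is correct and follows essentially the same route as the paper: argue by contradiction, form the compact set of parameters where $\Hn(\G^N_t)\ge W_g(U)-\tfrac1N$ and $\Hn(\G^N_t\cap cl(N_r(U)))\ge\varepsilon(U)/2$ (nonempty by Proposition \ref{prop:intersection_U}), use the De Lellis--Tasnady covering/ramping construction with the deformations coming from failure of almost-minimality in pairs of $\mathcal{CO}(\mathcal{A}(r,U))$, and then contradict Proposition \ref{prop:intersection_U} applied to the modified good sweepout, the key point being that mass can enter $U$ only from the portion of the slice lying in $N_r(U)$. One intermediate claim should be restated: the bound ``$\Hn(\tilde\G^N_t\cap N_r(U))$ exceeds $\Hn(\G^N_t\cap cl(N_r(U)))$ by at most $O(1/N)$'' can fail, since a deformation supported in a set disjoint from $cl(U)$ may still push mass into $N_r(U)\setminus cl(U)$; what your separation remark actually yields, and what the contradiction needs, is $\Hn(\tilde\G^N_{t'}\cap cl(U))\le\Hn(\G^N_{t'}\cap cl(N_r(U)))+O(1/N)<\varepsilon(U)$, which is exactly the accounting in Case 2 of the paper's proof.
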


\begin{proof}
The proof is by contradiction (cf. proofs of \cite[5.3]{CD} and \cite[3.4]{DT}).
Assume $N$ to be sufficiently large so that $\frac{1}{N}< \varepsilon/2$.
Let $A_N = \{t \in [0,1]: \Hn(\G_t ^N)\geq W_g(U) - \frac{1}{N} \}$ and 
$B_N(U,r) = \{t \in [0,1]: \Hn(\G_t ^N \cap cl(N_r(U)))\geq \varepsilon(U)/2 \}$
Define $K_N(U,r) = A_N \cap B_N(U,r)$.
 $K_N(U,r)$ is a compact set as $A_N$ and $B_N(U,r)$ are closed.
 By Proposition \ref*{prop:intersection_U} $K_N(U,r)$ is non-empty.

Assume the lemma to be false. Then there is a sequence $N_k$,
so that $\G_t^{N_k}$ is not $\frac{1}{N_k}$-a.m. in some pair $(U^1_t, U^2_t) \in \mathcal{CO}(\mathcal{A}(r,U))$
for every $t \in K_{N_k}(U,r)$. To simplify notation we will drop sub- and superscript $N_k$.
We will modify family $\G_t$ on some open set containing $K=K(U,r) \subset [0,1]$, so that the new family $\G'_t$ has
$\Hn(\G'_t) < W$ for all $\G'_t$ with $\Hn(\G'_t \cap U) > \varepsilon(U)$.

By Lemma 3.1 in \cite{DT} and refinement of the covering
argument on page 13 in \cite{DT} 
it is possible to choose a covering $J_i=(a_i,b_i)$ of $K$
and a collection of sets $U_i$ so that

\begin{itemize}
    \item each point of $K$ is contained in at most
    two intervals $J_i$
    \item  $U_i \in \mathcal{A}(r,U)$ for all $i$
    \item if $cl(J_i) \cap cl(J_j) \neq \emptyset$ then
    $\inf_{x \in U_i, y \in U_j} d(x,y) >0$
    \item there exists a $\delta>0$ such 
    that $\{(a_i+ \delta,b_i-\delta)\}$ still cover $K$ and
    a family $\{\Om_{i,t} \}$,
    such that 
    
    1) $\Om_{i,t} = \Om_t$ if $t \notin J_i$
    and $\Om_{i,t} \setminus U_i = \Om_{t} \setminus U_i$ 
    for all $t$; 
    
    2) $\Hn (\partial \Om_{i,t}) \leq 
    \Hn (\partial \Om_{t}) + \frac{1}{4N}$ for every $t$;
    
    3) $\Hn (\partial \Om_{i,t}) \leq 
    \Hn (\partial \Om_{t}) - \frac{1}{2N}$
    if $t \in (a_i+ \delta,b_i-\delta)$.
\end{itemize}

\begin{figure} 
	\centering
	\includegraphics[scale=0.7]{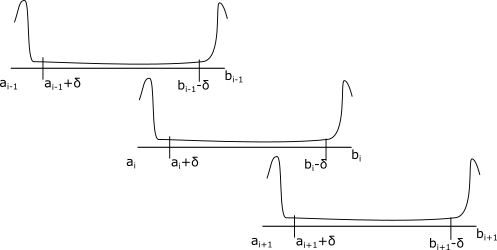}
	\caption{Graphs of areas of $\partial \Om_{i,t}$
	for overlapping intervals $[a_i,b_i]$}
	\label{fig:am_combinatorial}
\end{figure}

We define a new good sweepout $\{ \partial \Om'_t \}$ of $U$ given by

\begin{itemize}
\item $\Om'_t = \Om_t$ if $t \notin (a_i,b_i)$

\item $\Om'_t = \Om_{i,t}$ if $t$ is contained in a single $J_i$

\item $\Om'_t = [\Om_t \setminus (U_i \cup U_{i+1})] \cup [\Om_{i,t} \cap U_i]
\cup [\Om_{i+1,t} \cap U_{i+1}]$
if $t \in (a_i, b_i) \cap (a_{i+1}, b_{i+1})$
\end{itemize}

Observe that if $U_i \cap \partial U \neq \emptyset$ then
this modification of the family may lead to some 
transfer of area from $U$ to the complement of $U$.
However, in this case (since $U_i \subset N_r(U)$ 
by our definition of $\mathcal{A}(r,U)$)
the area of the surfaces inside $N_r(U)$ will not change.

\textbf{Claim:} 
If $ \Hn(\partial \Om'_t \cap U) \geq \varepsilon$ then 
$\Hn(\partial \Om'_t) < W_g(U)$. 

By Proposition \ref*{prop:intersection_U} the claim leads to the desired contradiction.

To prove the claim we verify several cases.

Case 1. Suppose $t \notin \bigcup J_i$ then, in particular, $t \notin K$ and
$\partial \Om'_t = \partial \Om_t$ satisfies $\Hn(\partial \Om'_t) < W - \frac{1}{N}$
or $\Hn(\partial \Om'_t \cap U) < \frac{\varepsilon}{2}$.

Case 2. Suppose $t \in \bigcup J_i$, but $t \notin K$.
We have two possibilities. Suppose first that 
$\partial \Om_t$ satisfies $\Hn(\partial \Om_t) < W - \frac{1}{N}$.
Since $t$ is contained in at most two distinct intervals $J_i$ we have that
$\Hn(\partial \Om'_t) \leq \Hn(\partial \Om_t) + 2 \frac{1}{4N}< W$.
So the claim holds.  

Suppose now that $\Hn(\partial \Om_t \cap N_r(U))< \varepsilon/2$.
We have that $t$ is contained in at most two intervals, say, $J_i$ and 
$J_{i+1}$. 
Family $\{\Om_t'\}$ only differs from $\{\Om_t\}$ inside $U_i \cup U_{i+1}$.
If the set $U_i \cup U_{i+1}$ is disjoint from 
$U$,
then $\Hn(\partial \Om_t' \cap U) = \Hn(\partial \Om_t \cap U) < \varepsilon/2$.

If $U_j$ (for $j=i$ or $i+1$) 
intersects $U$
then by definition of $\mathcal{A}(r,U)$ we must have 
$U_j \subset N_r(U)$ and so $\Hn(\partial \Om_t \cap U_j) < \varepsilon/2$.
It follows that 
$\Hn(\partial \Om'_t \cap U) \leq \Hn(\partial \Om_t \cap N_r(U)) + 2 \frac{1}{4N}
\leq  \varepsilon/2 + 2 \frac{1}{4N} < \varepsilon$.


Case 3. Suppose $t \in K$. Since the intervals $\{(a_i+ \delta,b_i-\delta)\}$
cover $K$ and each point of $K$ is contained in at most two intervals $J_i$ we have
that $\Hn(\partial \Om'_t) \leq \Hn(\partial \Om_t)+ \frac{1}{4N} - \frac{1}{2N} \leq W - \frac{1}{8N}$

\end{proof}

Now we can prove Proposition \ref*{sequence existence}. Fix $\delta>0$.
Let $\{ \G^N_{t_N} \}$ be the min-max sequence from Lemma \ref*{combinatorial lemma}. 
We will show that its subsequence satisfies the requirements of 
Proposition \ref*{sequence existence}.
Conditions (B) and (C) are satisfied by construction. 
We will choose a subsequence that also satisfies (A).

Observe that it follows from the definition
if $U \subset V$ and $\G$ is $\varepsilon$-a.m. in $V$
then $\G$ is $\varepsilon$-a.m. in $U$.

\textbf{Step 1. Almost minimizing annuli around points in $cl(U)$.}
We start by finding a subsequence of $\{ \G^N_{t_N} \}$ that is a.m. 
for annuli centered at $x \in cl(U)$.

By Lemma \ref*{combinatorial lemma} for each $0<r<\frac{\delta}{10}$ and each $x \in cl(U)$ we have that
$\G^k$ is $\frac{1}{k}$-a.m. either in $B_r(x)$ or $N_{1}(U) \setminus cl(B_{9r}(x))$.
For a fixed $r$ as above we have two possibilities.

(a) either $\{ \G^k \}$ is $1/k$-a.m. in $B_r(y)$ for $k>k(y)$ for all $y \in cl(U)$;

(b) or there is a (not relabeled) subsequence $\{ \G^k \}$ and a sequence
$\{ x^k_r \}$, $x^k_r \in cl(U)$, such that $ \G^k $ is 
$1/k$-a.m. in
$N_{1}(U) \setminus cl(B_{9r}(x^k_r)$.

Choose a sequence of radii $r_j \rightarrow 0$.  
If there exists $r_j>0$ such that (a) holds then 
condition (A) is satisfied for all $y \in cl(U)$ for
$r(y) = \min \{r_j,\delta \}$.
Suppose not. 
By compactness of $cl(U)$ we can select (not relabeled) subsequences
$x^k_{r_j} \rightarrow x^j \in cl(U)$
and $x^j \rightarrow x \in cl(U)$.
After choosing an appropriate diagonal subsequence we obtain
that $\G^k$ is $\frac{1}{k}$-a.m. in $N_{1}(U) \setminus cl(B_{\frac{1}{j}}(x))$
for all $k>j$. In particular, (A) of Proposition \ref*{regularity}
holds for all annuli centered at $x$ with $r(x)=\delta$.
For $y \in cl(U) \setminus x$ we obtain that $\{ \G^k \}$ is a.m. 
for annuli centered at $y$ with $r(y)= \min \{\delta, d(y,x) \}$.

\vspace{3mm}

\textbf{Step 2. Almost minimizing annuli around points in 
$M \setminus cl(U)$.}
Let $\{ \G^n \}$ denote the min-max sequence from Step 1.
By Lemma \ref*{combinatorial lemma} for each $y \in M \setminus cl(U)$
we have that 

(a) either $\{ \G^k \}$ is $1/k$-a.m. in $B_r(y) \setminus cl(U)$ for $k>k(y)$ for all $y \in M \setminus cl(U)$;

(b) or there is a (not relabeled) subsequence $\{ \G^k \}$ and a sequence
$\{ x^k_r \}$, $x^k_r \in M \setminus cl(U)$, such that $ \G^k $ is 
$1/k$-a.m. in
$M \setminus cl(U \cup B_{9r}(x^k_r))$.

If (a) holds for some positive radius $r_0$ then condition (A) is satisfied
for all $y \in M \setminus cl(U)$ for $r(y) = \min \{r_0, d(y, cl(U) \}$.
Otherwise, we obtain a sequence $\{ x^j \}$ and a (not relabeled) subsequence
$\{ \G^k \}$, such that
that $\G^j$ is $\frac{1}{j}$-a.m. in $M \setminus cl(U \cup B_{1/j}(x^j))$
for all large $j$. If sequence $\{ x_j \}$ contains a subsequence that converges to a point
$x \in M \setminus cl(U)$ then we verify that for a subsequence of $\{ \G^k \}$ condition (A) is
satisfied for $x$ with $r(x) = d(x,cl(U)$ and for all $y \in M \setminus cl(U)$ with $r(y) = \min \{ d(y,cl(U)), d(y,x) \}$.
Otherwise there is a subsequence of $\{ x_j \}$, such that either $d(x_j, cl(U)) \rightarrow \infty$
or $d(x_j, cl(U)) \rightarrow 0$. In both cases we have that 
condition (A) is
satisfied for all $y \in M \setminus cl(U)$ with $r(y) = d(y,cl(U))$.

%
%
%
%

%


\begin{tabbing}
\hspace*{7.5cm}\=\kill
Gregory R. Chambers                 \> Yevgeny Liokumovich\\
Department of Mathematics           \> Department of Mathematics\\
Rice University                     \> Massachusetts Institute of Technology\\
Houston, Texas 77005             \> Cambridge, MA 02142\\
USA                                 \> USA\\
e-mail: gchambers@rice.edu      \> e-mail: ylio@mit.edu\\
\end{tabbing}


\begin{thebibliography}{99}
\bibliographystyle{alpha}
\small

\bibitem[Al]{Al} F. Almgren, The theory of varifolds, Mimeographed notes, Princeton (1965)
\bibitem[Ba]{Ba} V. Bangert, Closed geodesics on complete surfaces, Math. Ann. 251 (1980) 83-96.
\bibitem[DT]{DT} C. De Lellis and D. Tasnady. The existence of embedded minimal hypersurfaces,
J. Differential Geom. 95 (2013), no. 3, 355-388.
\bibitem[CHMR]{CHMR} P. Collin, L. Hauswirth,
L. Mazet, H. Rosenberg, Minimal surfaces in finite volume non compact hyperbolic 3-manifolds, arXiv:1405.1324
\bibitem[Gr]{Gr}  M. Gromov, Plateau-Stein manifolds, Cent. Eur. J. Math., 12(7):923-951, 2014.
\bibitem[Fa]{Fa}  Falconer, K. J., Continuity properties of k-plane integrals and Besicovitch sets, Math. Proc.
Cambridge Phil. Soc. 87 (1980) no. 2, 221-226.
\bibitem[CR]{CR} G.R. Chambers, R. Rotman, Contracting loops on a Riemannian 2-surface, preprint.
\bibitem[CD]{CD}  T. Colding and C. De Lellis, The min-max construction of minimal surfaces, Surveys in differential
geometry, Vol. VIII (Boston, MA, 2002), 75-107, Int. Press, Somerville, MA, 2003.
\bibitem[GL]{GL}  P. Glynn-Adey and Y. Liokumovich. Width, Ricci curvature and minimal hypersurfaces, 
to appear in  J. Differential Geom.
\bibitem[GW]{GW} R. E. Greene, H. Wu, $C^\infty$ approximation of convex, subharmonic and plurisubharmonic functions, Annales scientifiques de l'ENS, 1979.
\bibitem[Gu1]{Gu1} L. Guth, The width-volume inequality, Geom. Funct. Anal. 17 (2007), 1139-1179.
\bibitem[KZ]{KZ} D. Ketover, X. Zhou, Entropy of closed surfaces and min-max theory, preprint.
\bibitem[MR]{MR} L. Mazet and H. Rosenberg, Minimal hypersurfaces of least area, arXiv:1503.02938v2
\bibitem[MN1]{MN1} F.C. Marques, A. Neves, Min-max theory and the Willmore conjecture, Ann. of Math. 179 (2014) 683-782.
\bibitem[MN2]{MN2} F.C. Marques, A. Neves, Morse index and multiplicity of min-max minimal hypersurfaces,
arxiv:1512.06460.
\bibitem[Mi1]{Mi1} J. Milnor, Morse theory, Princeton, 1963.
\bibitem[Mi2]{Mi2} J. Milnor, Lectures on the h-cobordism Theorem, Princeton, 1965.
\bibitem[Mo]{Mo} R. Montezuma, Min-max minimal hypersurfaces in non-compact manifolds, J. Differential Geom.
Volume 103, Number 3 (2016), 475-519.
\bibitem[Mu]{Mu} A. Mukherjee, Differential Topology, Springer, 2015.
\bibitem[Pi]{Pi} J. Pitts, Existence and regularity of minimal surfaces on Riemannian manifold, Mathematical
Notes 27, Princeton University Press, Princeton 1981.
\bibitem[Sa]{Sa} S. Sabourau, Volume of minimal hypersurfaces in manifolds with nonnegative Ricci curvature, J. reine angew. Math., to appear
\bibitem[SY]{SY} R. Schoen and S.-T. Yau, On the proof of the positive mass conjecture in general relativity, Comm. Math. Phys. 65 (1979), no. 1, 45-76.
\bibitem[So1]{So1}  A. Song, Embeddedness of least area minimal hypersurfaces, arXiv:1511.02844
\bibitem[So2]{So2}  A. Song, Existence of infinitely many minimal hypersurfaces in closed manifold,. arXiv:1806.08816.
\bibitem[Si]{Si} L. Simon, Lectures on geometric measure theory, Australian National University Centre for
Mathematical Analysis, Canberra, 1983.
\bibitem[Th]{Th}  G. Thorbergsson, Closed geodesics on non-compact Riemannian manifolds, Math.Z. 159 (1978), 249-258.
\bibitem[Zh1]{Zh1} X. Zhou, Min-max minimal hypersurface in $(M^{n+1}, g)$ with $Ric>0$ and $2 \leq n \leq 6$,
J. Differential Geom. 100 (2015), no. 1, 129-160.
\bibitem[Zh2]{Zh2} X. Zhou, Min-max hypersurface in manifold of positive Ricci curvature, arXiv:1504.00966.
\end{thebibliography}
\end{document}